\documentclass[reqno]{amsart}
\usepackage{amssymb,amsmath,amsthm,amsfonts,amscd}
\usepackage{graphicx,color}
\usepackage{physics}
\usepackage[dvipsnames]{xcolor}% tem mais cores que o acima.
\usepackage{comment}
\usepackage[utf8]{inputenc}
\usepackage[T1]{fontenc}
\usepackage{bigints}
\usepackage{esint}
\usepackage{tikz}
\usepackage{enumerate}
\usepackage{mathtools}

\newtheorem{theorem}{Theorem}[section]

\newtheorem{hipotese}{Hypothesis}[section]
\newtheorem{lemma}{Lemma}[section]
\newtheorem{remark}{Remark}

\newcommand{\hide}[1]{}

\usepackage{mathtools}
\mathtoolsset{showonlyrefs}

\title[Periodic Homogenization of Local/Nonlocal Systems]{\bf Periodic Homogenization of Local/Nonlocal Systems}

\author[Marcone C. Pereira, Luiza C. Rosa da Silva and Julio D. Rossi]{Marcone C. Pereira, Luiza C. Rosa da Silva and Julio D. Rossi}

\begin{document}
%----------------------------------
\address{Marcone C. Pereira 
\hfill\break\indent Depto. de Matem\'atica, Instituto de Matem\'atica e Estat\'istica 
\hfill\break\indent Universidade de S\~ao Paulo, Rua do Mat\~ao 1010, S\~ao Paulo - SP - Brazil}
\email{marcone@ime.usp.br}

\address{Luiza C. Rosa da Silva 
\hfill\break\indent Depto. de Matem\'atica, Instituto de Matem\'atica e Estat\'istica 
\hfill\break\indent Universidade de S\~ao Paulo, Rua do Mat\~ao 1010, S\~ao Paulo - SP - Brazil}
\email{luiza@ime.usp.br}

\address{Julio D. Rossi
\hfill\break\indent Depto. de Matematicas, Universidad Torcuatto di Tella
\hfill\break\indent Av. Pres. Figueroa Alcorta 7350, C1428, Buenos Aires - Argentina}
\email{julio.rossi@utdt.edu}

%-----------------------------------
\begin{abstract}

In this paper, we study the homogenization of elliptic equations that combine a local part, given by the Laplacian with Neumann boundary conditions, and its  nonlocal version, defined through an integral operator with a smooth kernel. These two components are coupled through an additional nonlocal operator also given by a smooth kernel. We consider a sequence of partitions of a fixed spatial domain into two regions - local and nonlocal - which are periodically distributed in space (with one of the regions consisting of small, periodically arranged holes).
Depending on the relative location of the local and nonlocal regions, we obtain qualitatively different limit behaviors. When the local part of the equation is confined to the small periodic holes, the sequence of solutions converges to the unique solution of a limit system in which the local component vanishes, while the nonlocal part persists and splits into two distinct components. On the other hand, when the local part of the problem lies outside the holes, the limit system exhibits a homogenized local diffusion operator coupled with a nonlocal equation. Finally, we analyze an intermediate regime in which only part of the local diffusion survives in the limit, by considering configurations consisting of parallel thin strips instead of holes. 

\end{abstract}

\keywords{Homogenization, periodic structures, asymptotic analysis, local--nonlocal coupling, Neumann problem\\
2020. Mathematics Subject Classification. 45K05, 35B27, 45N05.}

\maketitle
%---------------

\section{Introduction}\label{sec.intro}
The classical example of a local linear elliptic operator is
\begin{equation} \label{heat.equation}
\Delta u (x) = \operatorname{div} (\nabla u(x)).
\end{equation}
When considered in a bounded domain $\Omega$, this operator must be complemented with boundary conditions to ensure well-posedness of the problem.
Throughout this work, we impose homogeneous Neumann boundary conditions,
\[
\frac{\partial u}{\partial \eta}(x) = 0,
\]
where $\eta$ denotes the unit outward normal vector to the boundary of $\Omega$. These conditions physically correspond to no-flux across the boundary and are natural in various diffusion problems where the total mass is preserved.

In contrast, for nonlocal diffusion models, one typically considers operators of the form
\begin{equation} \label{eq:nonlocal.equation}
\int_{\Omega} G(x-y) \big( v(y) - v(x) \big)dy,
\end{equation}
where the kernel $G$ is nonnegative, continuous, symmetric, and satisfies
\[
\int_{\mathbb{R}^N} G(z)\,dz = 1.
\]
The quantity in \eqref{eq:nonlocal.equation} is genuinely nonlocal, since its evaluation at a point $x$ depends on the values of $v$ over the support of $G(x-\cdot)$. Operators of this type and their variations have been extensively used to model diffusion processes that involve long-range interactions, anomalous transport, or jump processes; see for instance \cite{ BCh, CF, Chasseigne-Chaves-Rossi-2006, Cortazar-Elgueta-Rossi-Wolanski, wolanski, delia-du-gunzburger-lehoucq, toninho} and the book \cite{julio}. For nonlocal equations with Neumann-type conditions and singular kernels, we refer to \cite{SXE} and the references therein. Notice that the nonlocal formulation provides a natural framework for problems that involve solutions  whose classical derivatives may not be defined or where interactions occur over finite distances. In this way, 
these models capture essential features of systems governed by nonlocal interactions.

In this work, our main goal is to study the homogenization process associated with problems that couple a classical local diffusion equation on one region of the domain with a nonlocal diffusion operator on a complementary region, interacting through a nonlocal transmission term. More precisely, we consider a smooth bounded domain $\Omega\subset\mathbb{R}^N$ decomposed into two disjoint sets $A$ and $B$ ($\Omega = A \cup B$, $A\cap B = \emptyset$)
with $A$ and $B$ having nonempty interior, and analyze the following stationary local/nonlocal system with transmission term: 
\begin{equation}\label{local}
\begin{cases}
\Delta u(x) + \displaystyle\int_{B} J(x-y)\big(v(y)-u(x)\big)dy = f(x), & x\in A, \\
\displaystyle \frac{\partial u}{\partial \eta}(x)=0, & x\in \partial A,
\end{cases}
\end{equation}
and
\begin{equation}\label{nonlocal}
\int_{B} G(x-y)\big(v(y)-v(x)\big) dy
+\int_{A} J(x-y)\big(u(y)-v(x)\big) dy
= f(x), \qquad x\in B.
\end{equation}
Here, the kernels $J$ and $G$ govern the jumps within and across the domains. In particular, $J$ encodes the coupling between $A$ and $B$, while $G$ acts only inside $B$. This coupled framework allows us to model systems in which local and nonlocal diffusion coexist and interact across an interface, capturing both short-range and long-range diffusion effects. 

Throughout the paper, we assume the following hypothesis on the kernels: 
\begin{hipotese}\label{hipo} {\rm
The kernels $K=J$, $G$ satisfy: $K \in C(\mathbb{R}^N,\mathbb{R})$, nonnegative, radial and compactly supported with $K(0)>0$, and
\[
\int_{\mathbb{R}^N} K(z)\,dz = 1.
\]
These conditions implies that $K$ is a radial probability density.
Moreover, the kernel $J$ satisfies $B_R(0) \subset \operatorname{supp} J$ with $R > \operatorname{diam} \left( A \cup B \right)$.\footnote{Throughout this work, $\operatorname{supp} K$ denotes the support of the function $K$, $\operatorname{diam} (S)$ the diameter of the set $S \subset \mathbb{R}^N$, and $B_R(0)$ the ball of radius $R$ centered at the origin.} }
\end{hipotese}

An important remark concerns the assumption on the kernel $J$, $B_R(0) \subset \operatorname{supp} J$ with $R > \operatorname{diam} \left( A \cup B \right)$, which is imposed to guarantee interaction between all points of $A$ and $B$. This ensures that the system remains fully coupled throughout the domain and prevents a degeneracy that could arise if there are points of the sets 
$A$ and $B$, that are located too far from each other.
We recall that similar coupled local/nonlocal diffusion frameworks have been previously introduced in \cite{bruna, luiza, quiros}. For more general systems, we refer \cite{delia-du-gunzburger-lehoucq,delia-perego-bochev-littlewood,delia-ridzal-peterson-bochev-shashkov,du-li-lu-tian,gal-warma,kriventsov}.

As usual for Neumann diffusion problems, a necessary condition for existence  of a solution is that $f \in L^2(\Omega)$ verifies 
\[
\int_A f(x)\,dx + \int_B f(x)\,dx = 0.
\]

Our main goal here is to study the homogenization of this coupled local/nonlocal model. We focus on two classical periodic configurations widely explored in homogenization theory: periodically distributed perforations and thin periodic strips. These geometries are well established in the literature as periodic configurations with concrete applications, see, for example, \cite{doina, luctartar, taras, marc2, tarasbook}.
Homogenization procedures for purely nonlocal equations involving a single kernel have been extensively investigated; see \cite{marc1, marc2, marc3}. For nonlocal equations with singular kernels, we refer to \cite{schwab, waurick, Ca}. In addition, for differential problems with Signorini-type interface constraints, see \cite{perugia}.
Our contribution extends these frameworks to the setting where local and nonlocal operators act simultaneously on periodically perforated domains.

Throughout the paper, the homogenization framework is based on a sequence of partitions $(A_n, B_n)$ of a fixed domain $\Omega$,
$$\Omega = A_n \cup B_n,$$
representing either periodically distributed perforations or thin periodic strips.
The corresponding characteristic functions $\chi_{A_n}(x)$ and $\chi_{B_n}(x)$ satisfy
\begin{equation}
\begin{gathered}
\chi_{A_n}(x) \rightharpoonup X, \quad \text{weakly-* in } L^{\infty}(\Omega), \\
\chi_{B_n}(x) \rightharpoonup 1 - X, \quad \text{weakly-* in } L^{\infty}(\Omega),
\end{gathered}
\end{equation}
as $n \to +\infty$, where $X$ is a positive constant such that
\begin{equation}\label{hipoX}
0 < X < 1.
\end{equation}

These hypotheses are consistent with previous analyses of nonlocal homogenization problems; see \cite{monia} and references therein. They ensure that the solutions of the local and nonlocal problems remain uniformly bounded, preventing singular behaviors, a key aspect for the construction of correctors and the derivation of uniform estimates.

Our main goal is to pass to the limit as $n \to \infty$ in the sequence of solutions $(u_n, v_n)$ to the system \eqref{local}--\eqref{nonlocal} associated with the partitions $(A_n, B_n)$, and to identify how the interplay between local and nonlocal effects influences the resulting homogenized model.

Our analysis reveals that different homogenized systems emerge depending on the location of the local and nonlocal operators.  We will be able to consider two different types of periodic configurations, periodic perforations and equidistributed periodic strips, which will be
precisely defined in Section 2.

In the periodic perforated case
when the local operator acts inside the perforations, the contribution of the Laplacian vanishes in the limit, yielding a purely nonlocal effective model, with weights determined by the constant $X$.

On the other hand, when the nonlocal operator acts inside the perforations, both local and nonlocal effects persist in the limit, producing a coupled homogenized system that combines a classical local operator with a nonlocal term modulated by $X$.
In particular, in this last case in the case of a perforated domain with Neumann boundary conditions on the boundaries of the holes, we recover the classical homogenized equation described in \cite{doina}. 

For the strip configuration, the oscillatory geometry gives rise to an effective operator that retains gradients only in directions tangential to the strips, while the nonlocal interaction averages across them.

From an analytical point of view, the proof relies on a combination of variational arguments and uniform energy estimates, see \cite{brezis, evans}, together with the construction of correctors adapted to the geometry of the holes, as discussed in \cite{monia} for nonlocal terms and in \cite{doina} for local parts.
The convergence of the characteristic functions of the phases and the stability properties of the nonlocal operator play a central role. The main difficulty in the paper consists in choosing appropriate test functions that allows to identify the different limit problems. Altogether, these tools enable us to rigorously justify the limiting behavior in both the periodic holes and strips settings, and to characterize the effective coupling mechanisms in each case.

\section{Description of mixed local and nonlocal equations and statements of the main results}
\setcounter{equation}{0}

As we have mentioned, our main objective is to analyze the homogenization process associated with the coupled local--nonlocal system 
\eqref{local}--\eqref{nonlocal}. 
The model involves the Laplace operator acting on the local subdomain and a nonlocal operator of convolution type defined by symmetric, 
non-singular kernels in the complementary region. 
For each $n \in \mathbb{N}$, the smooth bounded domain $\Omega \subset \mathbb{R}^N$ ($N \ge 2$) is decomposed into two disjoint subdomains $A_n$ and $B_n$,
\[
\Omega = A_n \cup B_n, 
\qquad 
A_n \cap B_n = \emptyset,
\]
whose geometry oscillates periodically as $n \to \infty$. 
Regardless of the configuration (periodically distributed holes or thin strips), the subdomains $A_n$ and $B_n$ represent, respectively, the local and nonlocal parts of the system.

Associated with this decomposition, we consider the homogeneous Neumann problem with the transmission nonlocal term
\begin{equation}\label{hlocal}
\begin{cases}
    f_n(x)=\Delta u_n(x) + \displaystyle\int_{B_n} J(x-y)(v_n(y)-u_n(x))\,dy, \quad x\in A_n, \\[0.5em]
    \displaystyle\frac{\partial u_n}{\partial \eta}(x)=0, \quad x\in \partial A_n,
\end{cases}
\end{equation}
and
\begin{equation}\label{hnl}
    f_n(x)=\int_{B_n} G(x-y)(v_n(y)-v_n(x))\,dy+\int_{A_n} J(x-y)(u_n(y)-v_n(x))\,dy, \quad x\in B_n,
\end{equation}

The existence of solutions in \eqref{hlocal}--\eqref{hnl} requires that the source term 
$f_n \in L^2(\Omega)$ satisfies the compatibility condition
\[
\int_{A_n} f_n(x)\,dx+\int_{B_n} f_n(x)\,dx = 0.
\]
For such a source $f_n$, the weak formulation of \eqref{hlocal}--\eqref{hnl} is naturally written in the Hilbert space
\[
W_n := 
\left\{
(u,v)\in H^1(A_n)\times L^2(B_n)
\;\middle|\;
\int_{A_n} u(x)\,dx+\int_{B_n} v(x)\,dx=0
\right\},
\]
and says that $(u,v)\in W_n$ satisfies
\[
a_n((u,v),(\varphi,\phi))
= -\int_{A_n} f_n(x)\varphi(x)\,dx 
- \int_{B_n} f_n(x)\phi(x)\,dx
\qquad \forall\, (\varphi,\phi)\in W_n,
\]
with the bilinear form $a_n$ defined by
\begin{align*}
a_n((u,v),(\varphi,\phi)) 
&= \int_{A_n} \nabla u(x)\cdot\nabla \varphi(x)\,dx
+ \frac{1}{2}\!\iint_{B_n\times B_n}\! G(x-y)\big(v(y)-v(x)\big)\big(\phi(y)-\phi(x)\big)\,dx\,dy\\
&\quad + \iint_{A_n\times B_n}\! J(x-y)\big(u(y)-v(x)\big)\big(\varphi(y)-\phi(x)\big)\,dx\,dy.
\end{align*}

This form defines an inner product in $W_n$ and the nonlocal transmission term allows us to get
the existence and uniqueness of a weak solution from the Lax--Milgram theorem. The solution can equivalently be characterized as 
the unique minimizer in $W_n$ of the energy functional
\begin{align}\label{energy.functional}
E_n(u,v)
&= \frac{1}{2}\int_{A_n} |\nabla u(x)|^2\,dx
+ \frac{1}{4}\iint_{B_n\times B_n} G(x-y)\big(v(y)-v(x)\big)^2\,dx\,dy \notag\\
&\quad + \frac{1}{2}\iint_{A_n\times B_n} J(x-y)\big(v(y)-u(x)\big)^2\,dx\,dy
+ \int_{A_n} f_n(x)u(x)\,dx + \int_{B_n} f_n(x)v(x)\,dx.
\end{align}

Once the well-posedness of \eqref{hlocal}--\eqref{hnl} is established, we look for the asymptotic behavior of the family of 
solutions $\{(u_n,v_n)\}$ as $n\to\infty$. 
The aim is to identify the homogenized limit problem satisfied by the limit pair $(u,v)$ and to characterize the effective coefficients 
arising from the local--nonlocal interaction through the microstructure. For the homogenization procedure, we assume that there is $f\in L^2(\Omega)$ such that 
$$f_n \longrightarrow f, \qquad \text{strongly in} \  L^2(\Omega), \quad \text{as} \ n\to \infty.$$

Once the equation is fixed, we study $\Omega$ under three periodic configurations  corresponding to the classical settings in homogenization theory:

\begin{enumerate}[(i)]
    \item \emph{Periodically distributed holes}, where the perforations are contained in \(A_n\) and the nonlocal operator acts in the complementary region \(B_n = \Omega \setminus A_n\);
    
    \medskip
    
    \item \emph{The complementary configuration}, in which the holes belong to \(B_n\) and the Laplacian acts in \(A_n = \Omega \setminus B_n\);
    
    \medskip
    
    \item \emph{Thin periodic strips}, where \(A_n\) consists of a family of strips distributed periodically within \(\Omega\).
\end{enumerate}
 
In what follows, we provide the precise definitions of these periodic structures: the cases of perforated domains and thin strips.

\subsection*{The Holes}

We now formalize the geometric setting for the perforated (hole-type) configuration. Consider a fixed and smooth bounded domain $\Omega\subset \mathbb{R}^N$, and let
$Y = (0, \ell_1) \times \cdots \times (0, \ell_N) \subset \mathbb{R}^N$ be a reference periodicity cell. Let \( T \subset Y \) be an open subset with smooth boundary \( \partial T \) such that \( \overline{T} \subset Y \), and define the usual cell described by the set: 
\begin{equation}\label{cell}
Y^* = Y \setminus T.
\end{equation}
Let \( n \in \mathbb{N} \), and consider the set of all translated images of $\frac{1}{n}\overline{T}$ given by
\[
\tau\left(\frac{1}{n} \overline{T}\right) := \bigcup_{k \in \mathbb{Z}^N} \frac{1}{n}(k \ell + \overline{T}), \quad \text{ where } \quad k\ell = (k_1 \ell_1, \dots, k_N \ell_N).
\]
%\textcolor{red}{ (melhor isso:) with\[ \tau\left(\frac{1}{n} \overline{T}\right) \subset \Omega\]}

The union above represents a periodic distribution of holes of equal size, regularly repeated throughout the space. 
Here, we will work with 
 holes that do not intersect the boundary set $\partial \Omega$.
Then, we consider the set of holes which are contained in the domain $\Omega$ as
\begin{equation}\label{holes}
T_n := \bigcup \left\{ \frac{1}{n}(k\ell + \overline{T}) \,\middle|\, \frac{1}{n}(k\ell + \overline{T}) \subset \Omega \right\},
\end{equation}
and the corresponding perforated domain by
\begin{equation}\label{domain_n}
\Omega_n := \Omega \setminus \overline{T_n}.
\end{equation}

Observe that, as $n$ increases, the domain $\Omega_n$ is progressively filled with a periodic constellation of small holes, becoming increasingly perforated. We denote by $T_n(j)$ the $j$th hole generated at the $n$th step. 

Depending on the case under consideration, we take $\Omega_n=A_n$ (or $B_n$). 

As a representative example, we consider the unit domain $\Omega = [0,1]^N$ for $N\ge 2$ and a periodic family of small balls distributed inside $\Omega$ according to the lattice $2n^{-1}\mathbb{Z}^N$. For each $n\in \mathbb{N}$, we work in the setting 
\[
\bigcup_{\substack{B_{r_n}(x_i) \subset [0,1]^N}} B_{r_n}(x_i),
\]
where $B_{r_n}(x_i)$ denotes the ball of radius $r_n$ centered at $x_i$, which belongs to the lattice, 
and the radius satisfies 
\[
0 < r_n = \frac{C}{n}, \qquad \mbox{with } C < 1.
\]

In two dimensions ($\Omega = [0,1]^2$), this corresponds to a periodic array of small disks. For instance, when $n=16$, the configuration is shown below.
\begin{center}
\begin{tikzpicture}[scale=5]
  % Unit square
  \draw[thick] (0,0) rectangle (1,1);

  % Parameters
  \def\r{0.015}   % radius of the balls
  \def\step{0.125} % 1/8: grid step for 8x8 = 64 balls

  % 8x8 grid of balls
  \foreach \i in {0,1,2,3,4,5,6,7} {
    \foreach \j in {0,1,2,3,4,5,6,7} {
      \fill[black!60] ({(\i+0.5)*\step}, {(\j+0.5)*\step}) circle (\r);
    }
  }

  % Caption
  \node at (0.5,-0.08) {$64$ disjoint balls periodically distributed in $[0,1]^2$};
\end{tikzpicture}
\end{center}

\subsection*{Thin Strips}
We now consider the periodic geometry consisting of thin strips. We define a partition of $\Omega$, where we describe $B_n = \Omega \setminus A_n$ and $A_n$ as an union of selected horizontal strips where, for $n \geq 2$, we get a partition of $\Omega$ into horizontal strips of equal height $h = 1/2^{n-1}$. Labeling the strips from bottom ($k = 1$) to top ($k = 2^{n-1}$), we define the set:
\begin{equation}\label{strip}
A_n = \bigcup_{\substack{k=1 \\ k \text{ even}}}^{2^{n-1}} (S_n)_k \quad \text{where} \quad (S_n)_k = \left\{x:=(\hat{x}, x_N) \in [0,1]^{N-1}\times [0,1] \;\middle|\; \frac{k-1}{2^{n-1}} \leq x_N < \frac{k}{2^{n-1}}\right\}.
\end{equation}
As a representative example in two dimensions, for $\Omega=[0,1]^2$, the domain is divided into $2^{n-1}$ horizontal strips of equal height.  
For instance, when $n=3$, the set $A_3 = (S_3)_2\cup (S_3)_4$ is represented below.
\begin{center}
\begin{tikzpicture}[scale=1.5]
    % Draw Omega square
    \draw[thick] (0,0) rectangle (4,4);
    % Partition lines
    \foreach \y in {1,2,3} { \draw[thin] (0,\y) -- (4,\y); }
    % Selected strips
    \fill[blue!20] (0,1) rectangle (4,2);
    \fill[blue!20] (0,3) rectangle (4,4);
    % Labels
    \node at (2,0.5) {$(S_3)_1$};
    \node at (2,1.5) {$(S_3)_2$};
    \node at (2,2.5) {$(S_3)_3$};
    \node at (2,3.5) {$(S_3)_4$};
    % Caption
    \node[below] at (2,-0.5) 
      {$A_3$: the selected strips $(S_3)_2$ and $(S_3)_4$ in $[0,1]^2$.};
\end{tikzpicture}
\end{center}

We now describe the homogenization results and discuss the limit as $n\to \infty$ in three separate cases, as described below.

\subsection{Local in Holes}
The first situation arises when the equation \eqref{hlocal} is defined in $A_n$, which is given by the union of holes \eqref{holes}, obtaining the following theorem.

\begin{theorem}\label{localinballs}
Let $(u_n,v_n)\in W_n$ be the family of solutions to 
\eqref{hlocal}--\eqref{hnl}, where $A_n$ is the union of holes \eqref{holes}.  
Then, as $n\to\infty$, the following convergences hold:
\[
\chi_{A_n}u_n \rightharpoonup u  \quad\text{in } L^2(\Omega)\quad \text{and} \quad 
\chi_{B_n}v_n \rightharpoonup v \quad\text{in } L^2(\Omega),
\]
and the limit pair $(u,v)\in L^2(\Omega)\times L^2(\Omega)$ satisfies the compatibility condition
\[
\int_\Omega u(x)\,dx+\int_\Omega v(x)\,dx=0.
\]
Moreover, the limit $(u,v)\in L^2(\Omega)\times L^2(\Omega)$
is uniquely characterized as the weak solution of the following {homogenized problem}:  
\begin{align}\label{limhlocal}
f(x)X=
\int_\Omega J(x-y)\,\big( Xv(y)-(1-X)u(x)\big)\,dy,
\qquad x\in\Omega,
\end{align}
\begin{align}\label{limhnl}
         f(x)(1-X)&=\int_\Omega G(x-y)((1-X)v(y)-(1-X)v(x))dy\notag \\
        &\qquad +\int_\Omega J(x-y)((1-X)u(y)- Xv(x))dy, \quad x\in \Omega.
\end{align}
This system gives the homogenized equation of \eqref{hlocal}--\eqref{hnl}.
\end{theorem}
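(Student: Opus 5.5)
The argument has three stages: a uniform energy bound, weak compactness, and passage to the limit in the weak formulation with test functions chosen so that the Laplacian term disappears. Uniqueness of the limit system then upgrades subsequential convergence to convergence of the whole family.

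\emph{Uniform bounds.} We test the weak formulation with $(u_n,v_n)$ itself. This gives
\[
a_n((u_n,v_n),(u_n,v_n)) \le \|f_n\|_{L^2(\Omega)}\big(\|u_n\|_{L^2(A_n)}+\|v_n\|_{L^2(B_n)}\big).
\]
To control $\|u_n\|_{L^2(A_n)}+\|v_n\|_{L^2(B_n)}$ we need a Poincaré-type inequality that is uniform in $n$. We seek a constant $C$ independent of $n$ such that
\[
\|u\|^2_{L^2(A_n)}+\|v\|^2_{L^2(B_n)} \le C\, a_n((u,v),(u,v)) \qquad \text{for all } (u,v)\in W_n.
\]
The point is to use only the coupling term, since $J \ge c>0$ on $B_R(0)$ and $R>\operatorname{diam}\Omega$. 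With $c=\min_{B_R} J$ we have
\[
\iint_{A_n\times B_n} J\,(v(x)-u(y))^2 \ge c\iint_{A_n\times B_n} (v(x)-u(y))^2.
\]
Expanding the square yields $|B_n|\,\|u\|^2+|A_n|\,\|v\|^2-2\int u\int v$. Now use the constraint $\int_{A_n} u=-\int_{B_n} v$, so the cross term becomes $+2(\int_{A_n} u)^2\ge 0$. Since $|A_n|\to X|\Omega|$ and $|B_n|\to (1-X)|\Omega|$ are bounded below, this gives the uniform bound. The gradient term is not needed, which is fortunate because the holes shrink and no uniform Poincaré inequality is available on $A_n$.

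\emph{Compactness.} By the bound above, $\chi_{A_n}u_n$ and $\chi_{B_n}v_n$ are bounded in $L^2(\Omega)$. Along a subsequence they converge weakly to some $u$ and $v$. Testing against the constant $1$ passes the compatibility condition to the limit.

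\emph{Passage to the limit.} Take $\varphi,\phi\in C^1(\overline\Omega)$ and test with $(\varphi|_{A_n},\phi|_{B_n})$, after subtracting constants to stay in $W_n$. Constants do not affect $a_n$, and the right-hand sides vanish against constants by compatibility, so this subtraction is harmless.

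\emph{Local term.} We have $\int_{A_n}\nabla u_n\cdot\nabla\varphi$ with $\|\nabla u_n\|_{L^2(A_n)}$ bounded only. We first try test functions $\varphi=\varepsilon\psi$ scaled away. A better choice is $\varphi$ that is constant on each hole, e.g. $\varphi_n=\sum_j \psi(x_j)\chi_{T_n(j)}$ with $x_j$ the hole centers. Then $\nabla\varphi_n=0$ on $A_n$, and $\varphi_n\to\psi$ uniformly on $A_n$ because $\psi$ is Lipschitz. The product of strongly and weakly convergent sequences converges, so this choice is legitimate: $\chi_{A_n}\varphi_n\to X\psi$ in a weak-strong sense, and the Laplacian term vanishes identically. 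This is the step I expect to be the main obstacle, namely justifying that the local part disappears. The piecewise-constant-on-holes test function is the device I would use.

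\emph{Nonlocal terms.} These are of the form
\[
\iint \chi_{A_n}(y)\chi_{B_n}(x) J(x-y)\,\big(u_n(y)-v_n(x)\big)\big(\varphi_n(y)-\phi(x)\big).
\]
We split them into products. Take for example $\int\!\!\int \chi_{A_n}(y)u_n(y)\chi_{B_n}(x)J(x-y)\phi(x)\,dx\,dy$. The inner function $y\mapsto\int \chi_{B_n}(x)J(x-y)\phi(x)\,dx$ converges strongly (uniformly) to $(1-X)\int J(x-y)\phi(x)\,dx$, because $J$ is continuous and $\chi_{B_n}\rightharpoonup 1-X$ weakly-*; by compactness of $J$-convolution the convergence is uniform. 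Its product with the weakly convergent $\chi_{A_n}u_n$ therefore converges. Terms such as $\int \chi_{B_n}v_n(x)\phi(x)\int_{A_n}J(x-y)\,dy\,dx$ are handled the same way, giving the factor $X\int J$. Each term has at most one weakly convergent factor and the rest converge strongly, so every term passes to the limit. Similarly $\chi_{A_n}f_n\varphi_n\rightharpoonup Xf\psi$ and $\chi_{B_n}f_n\phi\rightharpoonup(1-X)f\phi$.

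Collecting the coefficients of $\psi$ and $\phi$ separately yields the weak form of \eqref{limhlocal}--\eqref{limhnl}. By density this holds for all $\psi,\phi\in L^2$.

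\emph{Uniqueness.} The limit bilinear form is coercive on the zero-mean pairs by the same argument as in the uniform bounds. So the limit problem has a unique solution, and hence the whole sequence converges.
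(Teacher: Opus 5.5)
Your proof is correct in substance, and the passage to the limit is the paper's. Test functions constant on each hole make $\int_{A_n}\nabla u_n\cdot\nabla\varphi_n$ vanish identically; you use the value at a point of $T_n(j)$, the paper uses the average $\fint_{T_n(j)}\varphi$. The nonlocal terms pass to the limit because convolution with $J$ or $G$ turns $\chi_{A_n}\rightharpoonup X$, $\chi_{B_n}\rightharpoonup 1-X$ into uniform convergence, leaving one weakly convergent factor per term.

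The genuine difference is the a priori bound. The paper combines the uniform nonlocal Poincaré inequality on the connected perforated set $B_n$ (Lemma~\ref{lem:unif-poincare}) with Lemma~\ref{uvbounded}. That lemma is proved only under the extra assumption that $\int_{B_n}v_n$ stays bounded, which the paper never verifies; Lemma~\ref{uvboundf} merely restates the constraint. You use only the coupling term. With $J(x-y)\ge c>0$ for all $x,y\in\Omega$ and $\int_{A_n}u=-\int_{B_n}v$, the cross term in $\iint_{A_n\times B_n}(u(x)-v(y))^2$ becomes $+2(\int_{A_n}u)^2$. This gives $a_n((u,v),(u,v))\ge c\min\{|A_n|,|B_n|\}\big(\|u\|^2_{L^2(A_n)}+\|v\|^2_{L^2(B_n)}\big)$ uniformly in $n$.

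This route is shorter, needs neither connectedness of $B_n$ nor Lemma~\ref{lem:unif-poincare}, and actually closes the estimate. Its cost is complete reliance on the long-range hypothesis, read as $J>0$ on $B_R(0)$. Take $c$ as the minimum of $J$ over the closed ball of radius $\operatorname{diam}\Omega$, since over the open ball $B_R$ the infimum can be $0$. A Poincaré-type argument like the paper's is what a short-range coupling kernel would require. Your remark that a constant pair $(k,k)$ is annihilated by $a_n$ and by the right-hand side (since $\int_\Omega f_n=0$) is also useful. It means the weak formulation holds for arbitrary test pairs, which is cleaner than the paper's handling of test functions.

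The one step that does not go through as written is uniqueness, because the limit system is not symmetric in $(u,v)$. Take the difference $(w_u,w_v)$ of two solutions and pair the homogeneous equations with $(w_u,w_v)$ itself. The coupling part is then $\iint J(x-y)\big((1-X)a-Xb\big)(a-b)\,dy\,dx$ with $a=w_u(x)$, $b=w_v(y)$. That integrand is indefinite when $X\neq\frac12$, so the lower bound $J\ge c$ cannot be inserted.

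Instead pair with $(w_u/X,\,w_v/(1-X))$, as the paper does; equivalently, work with $U=w_u/X$ and $V=w_v/(1-X)$. The quadratic form becomes $X(1-X)\iint J(x-y)(U(x)-V(y))^2\,dy\,dx+\frac{(1-X)^2}{2}\iint G(x-y)(V(x)-V(y))^2\,dy\,dx$. The constraint now reads $X\int_\Omega U+(1-X)\int_\Omega V=0$, so $-2\int_\Omega U\int_\Omega V=\frac{2X}{1-X}\big(\int_\Omega U\big)^2\ge 0$. Your expansion argument then applies verbatim, and with this change the proof is complete.
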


Notice that the Laplacian term disappears in \eqref{limhlocal}. For this homogenization procedure, we use characteristic functions to pass to the limit in nonlocal terms, as shown in \cite[Theorem 2.3]{monia}. 

In addition to the convergence results obtained in Theorem~\ref{localinballs},
we introduce a corrector for the pair of limit solutions
$(u,v)\in L^2(\Omega)\times L^2(\Omega)$.
Due to the oscillatory nature of the perforated configuration, the convergence of the  sequences $(u_n,v_n)\in H^1(A_n)\times L^2(B_n)$ is, in general, only weak in the $L^2$-norm. 
To overcome this limitation, we construct a corrector sequence to compensate
these oscillations,
more precisely, following the classical corrector techniques in homogenization
(see, for example, \cite{monia}), we refine the convergence to the homogenized solution $(u,v)$ by introducing a suitable correction $w_{1,n}$ and $w_{2,n}$, which allows us to recover strong convergence in $L^2(\Omega)$. Moreover, we also need the regularity of $u$ to obtain this convergence.

Our correction must belong to $W_n$. Assuming~\eqref{hipoX}, we define the corrector vector
\[
w_n \; =\Bigg(
w_{1,n} - \frac{1}{2}\,\frac{m_n}{|A_n|},
\;\;
w_{2,n} - \frac{1}{2}\,\frac{m_n}{|B_n|}
\Bigg) 
\]
where 
\[
m_n := \int_{A_n} w_{1,n}(x)\,dx + \int_{B_n} w_{2,n}(x)\,dx
\]
with $w_{2,n}\in L^2(\Omega)$ defined by
$$w_{2,n}=\displaystyle\frac{\chi_{B_n}v}{1-X}$$ 
and 
$w_{1,n}\in L^2(\Omega)$ by
\begin{equation}\label{w1n}
    w_{1,n}(x)=
    \begin{cases}
        \dfrac{u(x)}{X}, & x\in B_n,\\[1.2ex]
      \displaystyle \sum_j \chi_{T_n(j)}(x)\,\fint_{T_n(j)} \dfrac{u(z)}{X}\,dz, & x\in A_n=\displaystyle\bigcup _{j=1}^{N_n}T_n(j).
    \end{cases}\end{equation}
Here, $\displaystyle\fint_{\mathcal{O}} f(x)\,dx$ is the average value of $f$ over $\mathcal{O}$, that is,
\[
\fint_{\mathcal{O}} f(x)\,dx = \frac{1}{|{\mathcal{O}}|}\int_{\mathcal{O}} f(x)\,dx.
\]

Concerning correctors, we have the following result.

\begin{theorem}\label{correctorlocalinballs}  Let $(u,v)\in L^2(\Omega)\times L^2(\Omega)$, with $u$ continuous in $\overline{\Omega}$, the solution of \eqref{limhlocal}-\eqref{limhnl} which satisfies 
$$\int_\Omega u(x)dx+\int_{\Omega} v(x)dx=0.$$
Considering that we have
$$0<X<1,$$
then
\begin{equation}\label{convcorrecto}
    \bigg\|u_n-\bigg(w_{1,n}-\frac{1}{2}\frac{m_n}{|A_n|}\bigg)\bigg\|_{L^2(A_n)}+ \bigg\|v_n-\bigg(
    w_{2,n}-\frac{1}{2}\frac{m_n}{|B_n|}\bigg)\bigg\|_{L^2(B_n)}\longrightarrow 0, \qquad \text{as} \quad n\to \infty.
\end{equation}
\end{theorem}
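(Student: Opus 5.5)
The plan is the standard energy method for correctors. Set $\tilde w_n=(\tilde w_{1,n},\tilde w_{2,n}):=w_n\in W_n$ and $e_n=(u_n-\tilde w_{1,n},\,v_n-\tilde w_{2,n})\in W_n$. The bilinear form $a_n$ is coercive on $W_n$ with respect to the $L^2(A_n)\times L^2(B_n)$ norm, uniformly in $n$: the transmission term with $J\ge c>0$ on $B_R(0)\supset (A_n\cup B_n)-(A_n\cup B_n)$ controls $\iint_{A_n\times B_n}(u(y)-v(x))^2$, and together with the zero-mean constraint this yields $\|u\|^2_{L^2(A_n)}+\|v\|^2_{L^2(B_n)}\le C\,a_n((u,v),(u,v))$ with $C$ depending only on $c$, $|\Omega|$ and the lower bounds of $|A_n|,|B_n|$, which converge to $X|\Omega|,(1-X)|\Omega|>0$. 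Hence it suffices to show $a_n(e_n,e_n)\to0$.

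Expanding,
\[
a_n(e_n,e_n)=a_n((u_n,v_n),(u_n,v_n))-2a_n((u_n,v_n),\tilde w_n)+a_n(\tilde w_n,\tilde w_n).
\]
The first term equals $-\int_{A_n}f_nu_n-\int_{B_n}f_nv_n\to-\int_\Omega f(u+v)$, since $f_n\to f$ strongly and $\chi_{A_n}u_n\rightharpoonup u$, $\chi_{B_n}v_n\rightharpoonup v$ by Theorem~\ref{localinballs}. The second term is $a_n((u_n,v_n),\tilde w_n)=-\int_{A_n}f_n\tilde w_{1,n}-\int_{B_n}f_n\tilde w_{2,n}$. The point of the corrector is that $\tilde w_{1,n}$ is piecewise constant on each hole $T_n(j)$, so $\nabla\tilde w_{1,n}=0$ in $A_n$ and the Laplacian contributes nothing to either the second or the third term. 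Continuity of $u$ gives $\|\chi_{A_n}\sum_j\chi_{T_n(j)}\fint_{T_n(j)}u/X-\chi_{A_n}u/X\|_{L^2}\to0$ by uniform continuity, since the hole diameters are $O(1/n)$. Thus $\chi_{A_n}\tilde w_{1,n}$ is asymptotically $\chi_{A_n}u/X\rightharpoonup u$, and $\chi_{B_n}\tilde w_{2,n}=\chi_{B_n}v/(1-X)\rightharpoonup v$. Moreover $m_n\to\int_\Omega u+\int_\Omega v=0$, so the mean correction is negligible. The second term therefore also tends to $-\int_\Omega f(u+v)$, and its weight $-2$ gives $+2\int_\Omega f(u+v)$ in the limit.

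The main obstacle is the third term,
\[
a_n(\tilde w_n,\tilde w_n)=\tfrac12\iint G(x-y)\chi_{B_n}(x)\chi_{B_n}(y)\Big(\tfrac{v(y)}{1-X}-\tfrac{v(x)}{1-X}\Big)^2+\iint J(x-y)\chi_{A_n}(y)\chi_{B_n}(x)\Big(\tfrac{u(y)}{X}-\tfrac{v(x)}{1-X}\Big)^2,
\]
up to $o(1)$ errors from the averaging and from $m_n$. To pass to the limit I would use the product convergence of characteristic functions from \cite[Theorem 2.3]{monia}: $\chi_{B_n}(x)\chi_{B_n}(y)\rightharpoonup(1-X)^2$ and $\chi_{A_n}(y)\chi_{B_n}(x)\rightharpoonup X(1-X)$ weakly-* in $L^\infty(\Omega\times\Omega)$. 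Each such product is tested against a fixed $L^1(\Omega\times\Omega)$ integrand, because $u,v\in L^2$ and the kernels are bounded. The limit is
\[
\tfrac12\iint G(v(y)-v(x))^2+\iint J\big((1-X)u(y)-Xv(x)\big)^2\cdot\tfrac{1}{X(1-X)}.
\]
One must then check, by testing the homogenized system \eqref{limhlocal}--\eqref{limhnl} with $u/X$ and $v/(1-X)$ respectively and symmetrizing, that this quantity equals exactly $-\int_\Omega f(u+v)$. This algebraic identification of the limit energy is the delicate step. If the weights do not match, I would instead choose the test pair in the weak formulation of the limit problem to be $(u/X,v/(1-X))$ from the outset, so that the expanded limit equations reproduce the quadratic form above.

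Granting this identification, the three limits combine to $-\int f(u+v)+2\int f(u+v)-\int f(u+v)=0$. Coercivity then gives \eqref{convcorrecto}.
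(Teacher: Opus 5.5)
Your proposal is correct and is essentially the paper's argument: the same corrector, the same expansion of $a_n(e_n,e_n)$, the first two terms evaluated through the weak formulation, the Laplacian dropping out of $a_n(w_n,w_n)$ because $w_{1,n}$ is constant on each hole, and the same identification of $\lim a_n(w_n,w_n)$ by testing \eqref{limhlocal}--\eqref{limhnl} with $(u/X,\,v/(1-X))$ --- which closes exactly, since symmetrizing the $J$-terms gives $\int_\Omega f(u+v)=-\frac{1}{2}\iint G(x-y)(v(y)-v(x))^2\,dy\,dx-\iint J(x-y)\frac{((1-X)u(x)-Xv(y))^2}{X(1-X)}\,dy\,dx$, the negative of your limit, so your fallback is unnecessary. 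Your explicit $n$-independent coercivity bound (from $J\ge c>0$ on $\Omega-\Omega$ plus the zero-mean constraint) is a welcome addition, since the paper invokes Lemma~\ref{coerc}, proved by contradiction for a fixed partition, without checking that its constant is uniform in $n$.
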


\subsection{Nonlocal in Holes}
The second case considers $B_n$ as the union of holes described by \eqref{holes}. 
In this context, a crucial analytical tool is the availability of suitable extension operators
\[
P_n : H^1(A_n) \longrightarrow H^1(\Omega),
\]
which allow us to work on the fixed domain $\Omega$ while preserving the relevant information on $A_n$.
Since the set $A_n$ is connected and the holes do not intersect the boundary $\partial\Omega$,
the existence of such operators follows from classical results on perforated domains
(see, for instance, \cite[Chapter~2.3]{doina}).
These operators satisfy uniform $H^1$--bounds, preserve the function and its gradient almost everywhere in $A_n$ and
play a fundamental role in the identification of the local term in the homogenized limit.

With this tool at hand, we can state the main homogenization result for this configuration.

\begin{theorem}[Nonlocal in Holes]\label{thrmnlinballs}
Let $(u_n,v_n)\in W_n$ be a family of solutions of \eqref{hlocal}--\eqref{hnl}, where $B_n$ is the union of holes \eqref{holes}.
Then, as $n\to \infty$, the following convergences hold:
\[
P_n u_n \to u \quad \text{strongly in } L^2(\Omega)
\qquad \text{and} \qquad
\chi_{B_n}v_n \rightharpoonup v \quad \text{weakly in } L^2(\Omega).
\]
Moreover, 
the extended functions $P_n u_n$ converge weakly to $u$ in $H^1(\Omega)$.

The limit pair $(u,v)\in H^1(\Omega)\times L^2(\Omega)$ satisfies the compatibility condition
\[
\int_\Omega Xu(x)\,dx+\int_\Omega v(x)\,dx=0,
\]
and is the unique solution of the following homogenized system:
\begin{equation}\label{nlh}
\begin{gathered}
f(x)(1-X) = \int_\Omega G(x-y)\big((1-X)v(y)-(1-X)v(x)\big)\,dy
+ \int_\Omega J(x-y)\, X\big((1-X)u(y)-v(x)\big)\,dy,
\quad x\in \Omega,
\end{gathered}
\end{equation}
\begin{equation}\label{lh}
f(x)X = \operatorname{div}(Q_{\rm Hom}\nabla u)
+ \int_{\Omega} J(x-y)\,X\big(v(y)-(1-X)u(x)\big)\,dy,
\quad x\in \Omega,
\end{equation}
with
\[
(Q_{\rm Hom}\nabla u)\cdot \eta = 0 \quad \text{on } \partial \Omega,
\]
where $\eta$ denotes the unit outward normal vector to $\partial \Omega$
and $Q_{\rm Hom}$ is the classical matrix of homogenized coefficients defined in \eqref{qij}.
\end{theorem}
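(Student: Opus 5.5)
We will follow the standard energy method for perforated domains, combining it with the characteristic-function technique for the nonlocal terms as in the proof of Theorem~\ref{localinballs}.

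\textbf{Step 1: uniform bounds.} We first derive uniform estimates. Testing the weak formulation with $(u_n,v_n)$ itself gives
\[
\|\nabla u_n\|_{L^2(A_n)}^2+\tfrac12\iint_{B_n\times B_n}G(x-y)(v_n(y)-v_n(x))^2+\iint_{A_n\times B_n}J(x-y)(u_n(y)-v_n(x))^2\le \|f_n\|_{L^2}\big(\|u_n\|_{L^2(A_n)}+\|v_n\|_{L^2(B_n)}\big).
\]
To close this estimate we need a Poincar\'e-type inequality on $W_n$ with a constant independent of $n$:
\[
\|u\|_{L^2(A_n)}^2+\|v\|_{L^2(B_n)}^2\le C\,a_n((u,v),(u,v)).
\]
We prove it by contradiction. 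Suppose normalized pairs $(u_k,v_k)$ have energy tending to $0$. Extend $u_k$ by $P_k$; by the uniform $H^1$ bound on $P_k$ and Rellich's theorem, $P_ku_k\to c$ strongly in $L^2$, and the gradient bound forces the limit to be a constant $c$. Because $B_R(0)\subset\operatorname{supp}J$ with $J\ge c_0>0$ on the relevant differences, the $J$-term controls $\iint_{A_n\times B_n}(u-v)^2$. This forces $\chi_{B_k}v_k$ to be close to $c\,\chi_{B_k}$ in $L^2$. The zero-mean constraint gives $c=0$, which contradicts the normalization. Hence $P_nu_n$ is bounded in $H^1(\Omega)$ and $\chi_{B_n}v_n$ is bounded in $L^2$.

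\textbf{Step 2: extracting limits.} We extract a subsequence with $P_nu_n\rightharpoonup u$ in $H^1$, $P_nu_n\to u$ in $L^2$, and $\chi_{B_n}v_n\rightharpoonup v$ in $L^2$. Since $\chi_{A_n}\rightharpoonup X$ weakly-* and $P_nu_n$ converges strongly, we get $\chi_{A_n}P_nu_n\rightharpoonup Xu$. This yields the compatibility condition $\int_\Omega Xu+\int_\Omega v=0$.

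\textbf{Step 3: passing to the limit in the weak formulation.} We use test pairs $(\varphi,\chi_{B_n}\phi)$ with $\varphi\in H^1(\Omega)$ and $\phi\in L^2(\Omega)$, corrected by constants to lie in $W_n$; the correction is negligible because the means converge. The terms are handled as follows.

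\emph{The $G$-term and $v$-part of the $J$-term.} These have the form $\iint G(x-y)\chi_{B_n}(x)\chi_{B_n}(y)\cdots$. We pass to the limit using compactness of the convolution operators. Products such as $\chi_{B_n}(x)\,(\chi_{B_n}v_n)(y)$ integrated against a continuous kernel converge because $\int G(x-y)\chi_{B_n}(y)v_n(y)\,dy\to\int G(x-y)v(y)\,dy$ strongly in $L^2$. Meanwhile $\chi_{B_n}(x)v_n(x)\phi(x)$ terms multiplied by $\int_{B_n}G(x-y)\,dy\to(1-X)\int G(x-y)\,dy$ converge strongly, exactly as in \cite[Theorem 2.3]{monia}.

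\emph{Mixed $J$-terms.} These involve $u_n(y)\chi_{A_n}(y)$ and converge via $\chi_{A_n}P_nu_n\rightharpoonup Xu$ and the compactness of convolution. Together with the $G$-term this produces the coefficients in \eqref{nlh} and the nonlocal part of \eqref{lh}.

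\emph{The local term.} This is $\int_{A_n}\nabla u_n\cdot\nabla\varphi=\int_\Omega \chi_{A_n}\nabla P_nu_n\cdot\nabla\varphi$, a product of two weakly converging sequences. It is the main obstacle, and we handle it by Tartar's oscillating test function method as in \cite{doina}. Take $\varphi=\psi\,(x_j+\frac1n\chi^j(nx))$-type test functions, where $\chi^j$ are the $Y$-periodic cell correctors on $Y^*$ defining $Q_{\rm Hom}$ in \eqref{qij}. Then $\chi_{A_n}\nabla w_n^j$ converges weakly to $Q_{\rm Hom}e_j$ with vanishing divergence in $A_n$ and zero conormal flux on the holes. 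Writing $\xi_n=\chi_{A_n}\nabla P_nu_n$, bounded in $L^2$ with weak limit $\xi$, a div-curl argument identifies $\xi=Q_{\rm Hom}\nabla u$. The nonlocal terms are only lower order perturbations of the divergence here, since they are bounded in $L^2$ and compact under convolution. The Neumann condition $(Q_{\rm Hom}\nabla u)\cdot\eta=0$ follows because the test functions $\varphi\in H^1(\Omega)$ are unrestricted on $\partial\Omega$.

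\textbf{Step 4: uniqueness.} The limit bilinear form is coercive on the subspace defined by the compatibility condition. Its energy contains $\int Q_{\rm Hom}\nabla u\cdot\nabla u$, with $Q_{\rm Hom}$ positive definite, plus nonnegative $G$ and $J$ quadratic terms. A zero-energy pair therefore has $u$ constant and $v=(1-X)u$. The compatibility condition then forces both to vanish, so Lax--Milgram gives uniqueness and the whole sequence converges.
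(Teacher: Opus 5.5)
Your argument is correct in outline, but it follows a different route from the paper.

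\textbf{A priori bounds.} The paper's Lemma \ref{vubounded} uses a uniform Poincar\'e--Wirtinger inequality on $A_n$. It does so under the extra hypothesis that $\int_{A_n}u_n$ stays bounded, which the paper never establishes. Your compactness proof of a uniform coercivity estimate for $a_n$ on $W_n$ (via $P_n$, Rellich, the $J$-coupling and the zero-mean constraint) gives the bound unconditionally.

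\textbf{The local term.} The paper first proves the corrector Theorem \ref{correctornlinballs}. It expands $a_n$ of the difference with $w_{n,1}=u-\frac1n\sum_i\chi_{A_n}U^i(nx)\partial_{x_i}u$, then replaces $\nabla u_n$ by $\nabla w_{n,1}$ in the weak formulation, and the latter converges to $\int_\Omega Q_{\rm Hom}\nabla u\cdot\nabla\varphi$ by periodic averaging. You instead pass to the limit with $\xi_n=\chi_{A_n}\nabla P_nu_n\rightharpoonup\xi$ and identify $\xi=Q_{\rm Hom}\nabla u$ by Tartar's oscillating test functions. There the nonlocal terms enter only through $\operatorname{div}\xi_n$, which is bounded in $L^2$ and hence compact in $H^{-1}$.

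\textbf{What each route buys.} Your route needs no extra regularity of $u$ and does not presuppose the identification of the limit. The paper's corrector proof already uses $\int_\Omega\chi_{A_n}u_nf_n\to\int_\Omega Xuf$ with $u$ the homogenized solution, which is exactly what this theorem is supposed to establish. In exchange, the paper's route delivers strong convergence after correction. Once your argument has identified the limit, that energy computation becomes usable as a proof of Theorem \ref{correctornlinballs}.

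\textbf{Details to make explicit.}
\begin{enumerate}
\item In Step 1 you only need $\inf_{y\in B_n}\int_{A_n}J(x-y)\,dx\ge c_J>0$. This follows from $J(0)>0$, continuity and the fact that $A_n$ fills a fixed fraction of every ball of fixed radius. The support hypothesis alone does not give $J\ge c_0$ on $B_R(0)$. You also need the gradient bound $\|\nabla P_nu\|_{L^2(\Omega)}\le C\|\nabla u\|_{L^2(A_n)}$ to conclude the limit is constant.
\item Subtracting the same constant from both components of a test pair costs nothing, since $a_n((u,v),(c,c))=0$ and $\int_\Omega f_n=0$. So every pair is admissible, with no correction error.
\item In Step 4, sign-definiteness requires testing the $v$-equation with $w_v/(1-X)$, as the paper does. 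With your unweighted pairing from Step 3, the $J$-part is $X\big((1-X)u(x)-v(y)\big)\big(u(x)-v(y)\big)$, which is not sign-definite.
\end{enumerate}
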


The proof of the above theorem relies on the construction of suitable correctors
and on the use of the extension operators $P_n$ to handle the local diffusion term
on the varying domains $A_n$.
Due to the oscillatory nature of the perforated configuration and the presence of local/nonlocal interactions,
the convergence of $P_n u_n$ to $u$ is, in general, only weak in $H^1(\Omega)$,
which is optimal in this class of problems, since strong convergence in $H^1(\Omega)$ typically fails.

In order to improve this convergence, we also introduce a suitable corrector, this one inspired by classical
homogenization techniques, see for instance \cite{Ben, marconericardo}. More precisely, we construct a family of correctors
$(w_{1,n},w_{2,n}) \in H^1(\Omega)\times L^2(\Omega)$, such that the corrected sequence converges strongly in $H^1(A_n)\times L^2(B_n)$. This refined convergence allows us to recover strong convergence, and combining some ideas from \cite{marc2,marc3}, we set the following corrector vector 
\begin{equation}\label{corrector2}
w_n=\bigg(w_{n,1}-\frac{1}{2}\frac{m_n}{|A_n|},w_{n,2}-\frac{1}{2}\frac{m_n}{|B_n|}\bigg), %:=\bigg(u(x)-\frac{1}{n}\sum_{i=1}^ N \chi_{A_n}(x)U^i(nx)\frac{\partial u}{\partial x}(x)-\frac{1}{2}\frac{m_n}{|A_n|},\frac{\chi_{B_n}v}{1-X}(x) -\frac{1}{2}\frac{m_n}{|B_n|}\bigg)\, ,
\end{equation}
where 
\begin{equation}\label{w1nnl}
w_{n,1}(x) = u(x)-\frac{1}{n}\sum_{i=1}^ N \chi_{A_n}(x)U^i(nx)\frac{\partial u}{\partial x_i}(x)
\quad \textrm{ and }  \quad 
w_{n,2}(x) = \frac{\chi_{B_n}v}{1-X}(x),
\end{equation}
here $(u,v)\in H^1(\Omega)\times L^2(\Omega)$ is the unique solution of the homogenization equation given in
\eqref{nlh}-\eqref{lh}, $U^i$ are the auxiliary function introduced in \eqref{Ui}, 
and
$$m_n=\displaystyle\int_{A_n} w_{1,n}(x)dx+\int_{B_n}w_{2,n}(x)dx.$$
By construction, the sequence $m_n$ converges to zero as $n\to \infty.$

\begin{theorem}[Corrector for Nonlocal in Holes]\label{correctornlinballs} Assume that
$$0<X<1,$$
and let $(u,v)\in H^1(\Omega)\times L^2(\Omega)$ be a solution of the equation 
\eqref{nlh}-\eqref{lh}, then we get  
\begin{equation}\label{correctorconvergence}
\bigg\|u_n-\bigg(w_{1,n}-\frac{m_n}{2|A_n|}\bigg)\bigg\|_{H^1(A_n)}+\bigg\|v_n-\bigg(\frac{\chi_{B_n}v}{1-X}-\frac{m_n}{2|B_n|}\bigg)\bigg\|_{L^2(B_n)}\longrightarrow 0, \qquad \text{as} \quad n\to \infty.\end{equation}
\end{theorem}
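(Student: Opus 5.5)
We follow the classical energy argument for correctors. Set $e_n := (u_n - \tilde w_{1,n},\, v_n - \tilde w_{2,n})$ with $\tilde w_{1,n}=w_{1,n}-\frac{m_n}{2|A_n|}$ and $\tilde w_{2,n}=w_{2,n}-\frac{m_n}{2|B_n|}$. By construction $e_n\in W_n$, since the mean-zero condition is enforced by subtracting $m_n/2$ from each component. Because $a_n$ is an inner product on $W_n$, the first step is to show that it controls the norm of $H^1(A_n)\times L^2(B_n)$ uniformly in $n$. We need a Poincaré-type inequality
\[
\|\varphi\|^2_{H^1(A_n)}+\|\phi\|^2_{L^2(B_n)}\le C\, a_n((\varphi,\phi),(\varphi,\phi))
\]
on $W_n$, with $C$ independent of $n$. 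We expect this to follow by combining the uniform extension operators $P_n$ with the coupling term involving $J$. Since $J\ge c>0$ on $\Omega-\Omega$, this term bounds $\iint_{A_n\times B_n}(\varphi(y)-\phi(x))^2$. That quantity, together with the zero-mean condition and $|A_n|,|B_n|$ bounded away from $0$ (which uses $0<X<1$), controls the $L^2$ norms. The gradient term is already present in $a_n$. It therefore suffices to prove $a_n(e_n,e_n)\to 0$.

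Next we expand
\[
a_n(e_n,e_n)=a_n((u_n,v_n),(u_n,v_n))-2a_n((u_n,v_n),\tilde w_n)+a_n(\tilde w_n,\tilde w_n)
\]
and compute the limit of each term separately.

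\begin{itemize}
\item \textbf{First term.} By the weak formulation it equals $-\int_{A_n}f_nu_n-\int_{B_n}f_nv_n$. Using $f_n\to f$ strongly, $P_nu_n\to u$ strongly in $L^2$ and $\chi_{B_n}v_n\rightharpoonup v$ (Theorem \ref{thrmnlinballs}), it tends to $-\int_\Omega Xfu-\int_\Omega fv$.
\item \textbf{Second term.} Again by the weak formulation it equals $-\int f_n\tilde w_{1,n}\chi_{A_n}-\int f_n\tilde w_{2,n}\chi_{B_n}$. Here $w_{1,n}\to u$ strongly in $L^2$, because the oscillating part is $O(1/n)$. Moreover $\chi_{B_n}w_{2,n}=\chi_{B_n}v/(1-X)\rightharpoonup v$ and $m_n\to0$. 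So this term has the same limit as the first.
\item \textbf{Third term.} This is $a_n(\tilde w_n,\tilde w_n)$, and we must show it converges to $\int_\Omega Q_{\rm Hom}\nabla u\cdot\nabla u$ plus the nonlocal energies of the limit system. The limit energy should be identified with $-\int Xfu-\int fv$ by testing \eqref{nlh}--\eqref{lh} with $(u,v/(1-X))$ after suitable scaling.
\end{itemize}

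For the nonlocal pieces of the third term we need limits of expressions such as
\[
\iint \chi_{B_n}(x)\chi_{B_n}(y)G(x-y)\Big(\tfrac{v(y)}{1-X}-\tfrac{v(x)}{1-X}\Big)^2 .
\]
Expanding the square gives products like $\chi_{B_n}(x)\chi_{B_n}(y)\,v(x)^2/(1-X)^2$ integrated against $G(x-y)$, together with cross terms in $v(x)v(y)$. The tensor-product characteristic functions converge weakly to $(1-X)^2$, and the diagonal terms like $\chi_{B_n}(x)v(x)^2$ converge weakly as well. These limits follow from weak-* convergence of $\chi_{B_n}$ and of $\chi_{B_n}\otimes\chi_{B_n}$ tested against $L^1$ functions, as in \cite[Theorem 2.3]{monia}. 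The coupling term is handled the same way, with $w_{1,n}\to u$ strongly.

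For the local piece we expand
\[
\int_{A_n}|\nabla w_{1,n}|^2=\int_{A_n}\Big|\nabla u-\sum_i\nabla_yU^i(nx)\partial_iu\Big|^2+o(1).
\]
The error is $o(1)$ because the leftover term $\frac1n U^i(nx)\nabla\partial_iu$ vanishes in $L^2$. This requires $u\in H^2$, or a density argument that approximates $\nabla u$ by smooth fields, which is the standard device in \cite{Ben, doina}. The periodic oscillating integrand then converges weakly to its cell average, which is exactly $Q_{\rm Hom}\nabla u\cdot\nabla u$ by the definition \eqref{qij} and the cell problems \eqref{Ui}.

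The main obstacle is this local corrector step: the regularity of $\partial_iu$ needed to make $\frac1nU^i(nx)\nabla\partial_i u\to0$. We plan to handle it by approximating $u$ in $H^1$ by smooth $u_\delta$, running the argument with $u_\delta$, and controlling the difference uniformly in $n$ using the boundedness of $\nabla_yU^i$ in $L^\infty$ or $L^2_{\rm per}$. Combining the three limits then gives $a_n(e_n,e_n)\to0$, which proves \eqref{correctorconvergence}.
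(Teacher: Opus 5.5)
Your proposal is essentially the paper's proof. Both arguments use uniform coercivity of $a_n$ on $W_n$ and the expansion $a_n(e_n,e_n)=a_n((u_n,v_n),(u_n,v_n))-2a_n((u_n,v_n),\tilde w_n)+a_n(\tilde w_n,\tilde w_n)$. The first two terms are evaluated through the weak formulation, and the third converges to the homogenized energy, which testing \eqref{nlh}--\eqref{lh} with $(u,v/(1-X))$ identifies with $-\int_\Omega Xfu-\int_\Omega fv$. Your coupling-term argument for coercivity is more explicit about uniformity in $n$ than the paper's appeal to Lemma \ref{coerc}, which is proved for a fixed partition $A,B$.

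Two caveats apply equally to the paper's proof. First, identifying the weak limits of $(\chi_{A_n}u_n,\chi_{B_n}v_n)$ with $(Xu,v)$ needs an independent proof of Theorem \ref{thrmnlinballs}, because the paper derives that theorem from this corrector result. Second, the regularity you flag should come from $H^2$ elliptic regularity of the limit Neumann problem, not from a density argument. That regularity is needed anyway just to have $w_{1,n}\in H^1(A_n)$.
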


\subsection{Local in Strips}
The third (and final) case appears when \eqref{local} is considered in $A_n$, where $A_n$ is now given by the family of strips in \eqref{strip}. 
This is a special situation in which we can follow the same strategy used in Theorem \ref{localinballs}, but with a fancy construction developed for the strip geometry. 
The key idea is to create a clever test function that matches this setting, allowing us to push through the homogenization argument. 
With this preparation, we are ready to state the main result, where we introduce
\[
u \in L^2([0,1]_{x_N}; H^1([0,1]^{N-1}))\]
by
\[
\begin{aligned}
u : [0,1] &\longrightarrow H^1([0,1]^{N-1}), &\quad x_N &\longmapsto u(\cdot,x_N).
\end{aligned}
\]

\begin{theorem}\label{thrmstrip} Let $(u_n,v_n) \in W_n$ be the family of solutions of \eqref{hlocal}--\eqref{hnl}, where $A_n$ is the equidistributed family of strips \eqref{strip}. 
Then the following convergences hold
$$\chi_{A_n}u_n\rightharpoonup u \in L^2([0,1]^N) \qquad \text{and} \qquad \chi_{B_n}v_n\rightharpoonup v \in L^2([0,1]^N).$$
The limit pair $(u,v)\in L^2([0,1]_{x_N};H^1([0,1]^{N-1}))\times L^2([0,1]^N)$ satisfies 
$$\displaystyle\int_{[0,1]^N} u (x) dx+\int_{[0,1]^N} v (x) dx=0,$$ with $(u,v)$ being the unique weak solution to the system:
\begin{align}\label{hstripsnl}
         f(x)(1-X)=\int_{[0,1]^N} G(x-y)&((1-X)v(y)
        -(1-X)v(x))dy\notag \\&+\int_{[0,1]^N} J(x-y)((1-X)u(y)- Xv(x))dy, \ \ x\in [0,1]^N,
\end{align}
\begin{align}\label{hstripslocal} 
    \begin{cases}&f(x)X=-\Delta_{\hat{x}} u(x)+\displaystyle\int_{[0,1]^N} J(x-y)(Xv(y)-(1-X)u(x))dy, \ \ x\in [0,1]^N,\\
    & \nabla_{\hat{x}} u\cdot \eta_{\hat{x}}=0 \quad \text{on} \; \partial [0,1]^N
\end{cases}
\end{align}
$$\Delta_{\hat{x}}:=\displaystyle\sum_{i=1}^{N-1}\frac{\partial^2}{\partial x_i^2}, \quad \nabla_{\hat{x}}:=\bigg(\displaystyle\frac{\partial}{\partial x_1},\frac{\partial}{\partial x_2}, \ldots, \frac{\partial}{\partial x_{N-1}}\bigg) \; \textrm{ and } \; \eta_{\hat{x}}:=(\eta_1,\ldots, \eta_{N-1}),$$
where $\eta$ denotes the unit outward normal vector to the boundary of $[0,1]^N$.
\end{theorem}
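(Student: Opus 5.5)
We follow the scheme of Theorem~\ref{localinballs}, with one extra ingredient: a tangential $H^1$ bound for $u_n$ that survives the limit. The plan has four steps.

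\textbf{Step 1: uniform energy bounds.} Test the weak formulation with $(u_n,v_n)$ itself, which is the same as comparing $E_n(u_n,v_n)\le E_n(0,0)=0$. This gives
\[
\int_{A_n}|\nabla u_n|^2+\iint_{B_n\times B_n}G(x-y)(v_n(y)-v_n(x))^2+\iint_{A_n\times B_n}J(x-y)(v_n(y)-u_n(x))^2\le C\big(\|u_n\|_{L^2(A_n)}+\|v_n\|_{L^2(B_n)}\big)\|f_n\|_{L^2}.
\]
We then need a Poincaré-type inequality on $W_n$ with a constant uniform in $n$. Because of Hypothesis~\ref{hipo}, the kernel $J$ is bounded below by a positive constant $c_0$ on $\Omega-\Omega$. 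Hence the coupling term dominates
\[
c_0\,|A_n|\,|B_n|\;\big\|\,\cdot\,\big\|^2,
\]
up to the zero-mean constraint. Concretely, expanding $\iint (v(y)-u(x))^2$ and using $\int_{A_n}u+\int_{B_n}v=0$ controls $\|u\|^2_{L^2(A_n)}+\|v\|^2_{L^2(B_n)}$. Since $|A_n|,|B_n|\to X,1-X>0$, the constant is uniform. As a result, $\chi_{A_n}u_n$, $\chi_{B_n}v_n$ and $\chi_{A_n}\nabla u_n$ are bounded in $L^2$. Extract weakly convergent subsequences $\chi_{A_n}u_n\rightharpoonup u$ and $\chi_{B_n}v_n\rightharpoonup v$; the compatibility condition passes to the weak limit.

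\textbf{Step 2: tangential regularity of $u$.} Inside each strip there is no boundary in the $\hat x$ directions except $\partial[0,1]^{N-1}\times[0,1]$. So for $\psi\in C^1_c$ and $i\le N-1$, integration by parts on each strip gives
\[
\int \chi_{A_n}u_n\,\partial_i\psi=-\int \chi_{A_n}\partial_i u_n\,\psi,
\]
since $\chi_{A_n}$ depends only on $x_N$. Passing to the limit, with $\chi_{A_n}\partial_iu_n\rightharpoonup \xi_i$, yields $\partial_i u=\xi_i\in L^2$. Therefore $u\in L^2([0,1]_{x_N};H^1([0,1]^{N-1}))$.

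\textbf{Step 3: identifying the limit equations.} The key construction is the test function, and this is the main obstacle. For a smooth $\varphi(x)$ we take $(\varphi/X\cdot\chi_{A_n},\,\phi/(1-X)\cdot\chi_{B_n})$, corrected by constants so that it lies in $W_n$. This pair is admissible because $\varphi|_{A_n}\in H^1(A_n)$, and its gradient is $\nabla\varphi$ restricted to $A_n$. With the normalization
\[
u_n\approx u/X,\qquad v_n\approx v/(1-X)
\]
on the respective sets, matching the weights in \eqref{hstripsnl}--\eqref{hstripslocal}, we examine each term.

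The local term is $\int_{A_n}\nabla u_n\cdot\nabla\varphi$. Its tangential part converges to $\int\nabla_{\hat x}u\cdot\nabla_{\hat x}\varphi$, with weights arranged by the chosen normalization. The normal part $\int\chi_{A_n}\partial_N u_n\,\partial_N\varphi$ is the problem: nothing forces it to vanish, and in the limit it would produce an $x_N$-diffusion that the statement does not contain. We would try first to kill it by choosing test functions $\varphi$ that are constant in $x_N$ on each strip. Concretely, take
\[
\varphi_n(x)=\varphi\big(\hat x,\ \text{midpoint of the strip containing }x_N\big).
\]
This is piecewise in $x_N$ but $H^1$ inside each strip $A_n$, which is all that is required. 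We have $\varphi_n\to\varphi$ uniformly and $\partial_N\varphi_n=0$, so only tangential derivatives remain; this is the "clever test function" adapted to the strips.

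The nonlocal terms, of the form $\iint J(x-y)\chi_{A_n}(y)\chi_{B_n}(x)\cdots$, pass to the limit exactly as in Theorem~\ref{localinballs}. The argument uses strong compactness of the convolution operators with continuous kernels, in the spirit of \cite[Theorem 2.3]{monia}: products of one weakly convergent factor with a strongly convergent factor $\int J(x-y)\chi_{A_n}(y)\varphi(y)\,dy\to X\int J\varphi$. The source terms converge since $f_n\to f$ strongly and $\chi_{A_n}\rightharpoonup X$. Testing separately with $(\varphi,0)$-type and $(0,\phi)$-type pairs, after the mean correction, gives the weak forms of \eqref{hstripslocal} and \eqref{hstripsnl}. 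The Neumann condition in $\hat x$ arises naturally because the test functions are unrestricted at the boundary.

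\textbf{Step 4: uniqueness.} The limit bilinear form, on the space of pairs $(u,v)$ with $u$ in the mixed space and $\int u+\int v=0$, is coercive by the same $J$-lower-bound argument as in Step 1. Lax--Milgram then gives uniqueness, so the whole sequence converges.
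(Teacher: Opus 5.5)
Your proposal is correct. Its decisive ingredient is the same as the paper's: on each strip of $A_n$ you freeze the $x_N$-dependence of the test function. The paper replaces $\psi(x_N)$ in a separable $\xi(\hat x)\psi(x_N)$ by its average over the strip, while you evaluate $\varphi$ at the strip midpoint. Either way, $\partial_{x_N}$ of the test function vanishes on $A_n$, so the normal part of $\int_{A_n}\nabla u_n\cdot\nabla\varphi_n$ drops out identically.

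The routes differ around this device. For the a priori bounds, the paper appeals to Lemma~\ref{uvbounded}. That lemma rests on the nonlocal Poincaré inequality of Lemma~\ref{lem:unif-poincare}, which was proved for the perforated geometry with connected $B_n$; here $B_n$ is a union of disjoint strips. It is also conditional on a bound for $\int_{B_n}v_n$. Your identity $\iint_{A_n\times B_n}(v(y)-u(x))^2\,dy\,dx=|B_n|\,\|u\|_{L^2(A_n)}^2+|A_n|\,\|v\|_{L^2(B_n)}^2+2\big(\int_{A_n}u\big)^2$, used under the mean-zero constraint, settles the bound in one line. It does require reading Hypothesis~\ref{hipo} as $J\ge c_0>0$ on $\Omega-\Omega$, which is the intended meaning (full coupling of all points of $A_n$ and $B_n$) and how the paper itself uses it when it asserts $b_n\ge c_J$.

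For the local term, the paper integrates by parts twice in $\hat x$. This needs lateral Neumann conditions on the test function and uniform convergence of $\partial_{x_i}^2\tilde\varphi_n$, and it yields only the very weak form $-\sum_{i\le N-1}\int u\,\partial_{x_i}^2\varphi$. The paper then recovers $u\in L^2([0,1];H^1([0,1]^{N-1}))$ through an elliptic-regularity argument in $\hat x$ for a.e.\ $x_N$. You instead use that $\chi_{A_n}$ depends only on $x_N$ to identify the weak limit of $\chi_{A_n}\nabla_{\hat x}u_n$ with $\nabla_{\hat x}u$. This gives the anisotropic regularity directly from the energy bound. It also lets you pass to the limit in the first-order form $\int\nabla_{\hat x}u\cdot\nabla_{\hat x}\varphi$ with no boundary condition on $\varphi$. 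Finally, it puts $u$ in the energy space that the uniqueness step requires (testing with $u/X$ and $v/(1-X)$, here with $X=1/2$).

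The one detail you should state explicitly is that smooth functions are dense in $L^2([0,1];H^1([0,1]^{N-1}))$. Your limit identity is obtained for smooth $\varphi$, and density is what extends it to all test functions in that space before you invoke coercivity.
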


Notice that, as a consequence of the homogenization process, in the limit
the Laplacian has effectively "lost"
 its $x_N$-component: 
only the derivatives in the $\hat{x}$-directions survive. 
This reflects the asymptotic behavior of the solution, 
where the influence of the $x_N$-direction has been completely averaged out, 
resulting in a purely transversal diffusion. 
Observe that this is in accordance with previous results given, for instance, in \cite{GM,GM1}.

\section{Existence and uniqueness of solutions to the system \eqref{local}--\eqref{nonlocal}}
\setcounter{equation}{0}

To begin our analysis, we first establish the existence and uniqueness of solutions to the coupled local--nonlocal system \eqref{local}--\eqref{nonlocal}. 
We consider the closed Hilbert subspace
\begin{align}\label{space}
W_n \;=\;
\Bigl\{ (u,v) \in H^{1}(A_n) \times L^{2}(B_n) \;\Big|\;
\int_{A_n} u(x)\,dx \;+\; \int_{B_n} v(x)\,dx = 0
\Bigr\},
\end{align}
endowed with the bilinear form
\begin{align}\label{innerproduct}
a_n\big((u,v),(\varphi,\phi)\big)
&= \int_{A_n} \nabla u(x)\!\cdot\! \nabla \varphi(x)\,dx
  + \frac{1}{2}\!\iint_{B_n\times B_n}\!
    G(x-y)\,(v(y)-v(x))\,(\phi(y)-\phi(x))\,dx\,dy \notag\\
&\quad + \iint_{A_n\times B_n}\!
    J(x-y)\,(u(x)-v(y))\,(\varphi(x)-\phi(y))\,dx\,dy.
\end{align}
This form defines an inner product in $W_n$.

A necessary compatibility condition for the weak formulation
\[
\begin{gathered}
(u,v) \in W_n \quad \text{such that} \\
a_n\big((u,v),(\varphi,\phi)\big)
= -\int_{A_n} f(x)\,\varphi(x)\,dx
  -\int_{B_n} f(x)\,\phi(x)\,dx,
  \qquad \forall (\varphi,\phi) \in W_n,
\end{gathered}
\]
(which corresponds to the weak formulation of the system 
\eqref{local}--\eqref{nonlocal}),
is that the forcing term $f$ satisfies the compatibility condition
\[
\int_{A_n} f(x)\,dx + \int_{B_n} f(x)\,dx = 0.
\]

The system \eqref{local}--\eqref{nonlocal} can be naturally associated 
with an energy functional whose minimizer gives the unique weak solution 
in the space $W_n$. 
The uniqueness of this minimizer can also be obtained directly by 
the Lax--Milgram Theorem, showing the consistency between 
the variational and functional-analytic approaches.

For completeness, we present the details below. 
Associated with the coupled local--nonlocal problem, 
we define the energy functional $E : W_n \to \mathbb{R}$ by
\begin{align}\label{energy}
E_n(u,v)
&=\frac{1}{2}\int_{A_n} |\nabla u(x)|^2\,dx
 +\frac{1}{4}\iint_{B_n\times B_n}
   G(x-y)\,\big(v(y)-v(x)\big)^2\,dx\,dy \notag \\ &  \quad +\frac{1}{2}\iint_{A_n\times B_n}
   J(x-y)\,\big(v(y)-u(x)\big)^2\,dy\,dx
+\int_{A_n} f(x)u(x)\,dx
 +\int_{B_n} f(x)v(x)\,dx.
\end{align}
Computing the Gateaux derivative of $E$ at $(u,v)$ in the direction 
$(\varphi,\phi)\in W_n$, we obtain
\begin{align*}
\dv{}{t} E_n\big((u,v)+t(\varphi,\phi)\big)\Big|_{t=0}
&= \int_{A_n} \nabla u(x)\!\cdot\!\nabla \varphi(x)\,dx
  + \frac{1}{2}\iint_{B_n\times B_n}
     G(x-y)\,(v(y)-v(x))\,(\phi(y)-\phi(x))\,dy\,dx \\
& \quad + \iint_{A_n\times B_n}
     J(x-y)\,(v(y)-u(x))\,(\phi(y)-\varphi(x))\,dy\,dx
  \\
& \quad+ \int_{A_n} f(x)\varphi(x)\,dx
  + \int_{B_n} f(x)\phi(x)\,dx.
\end{align*}
Hence, the Euler--Lagrange equations associated with the critical points 
of $E$ are given by
\[
\partial E_n(u,v)(\varphi,\phi)=0
\qquad \forall (\varphi,\phi)\in W_n.
\]
To formally recover the Euler-Lagrange equations, we assume temporarily that $u\in H^2(A_n)$, so that Green’s identity applies to the local term. Using also the symmetry of the kernel $G$, we find
\begin{align*}
&\int_{\partial A_n} \frac{\partial u}{\partial \eta}(x)\,\varphi(x)\,dS(x)
  - \int_{A_n} \Delta u(x)\,\varphi(x)\,dx
  - \iint_{A_n\times B_n} J(x-y)\,(v(y)-u(x))\,\varphi(x)\,dy\,dx
  = -\int_{A_n} f(x)\,\varphi(x)\,dx,\\[2mm]
&-\iint_{B_n\times B_n} G(x-y)\,(v(y)-v(x))\,\phi(x)\,dy\,dx
  + \iint_{A_n\times B_n} J(x-y)\,(v(y)-u(x))\,\phi(y)\,dy\,dx
  = -\int_{B_n} f(x)\,\phi(x)\,dx.
\end{align*}
Relabeling the variables in the integral involving the kernel $J$, 
we can rewrite the system as
\begin{align*}
&\int_{\partial A_n}\frac{\partial u}{\partial \eta}(x)\,\varphi(x)\,dS(x)
 + \int_{A_n} 
   \Big[\Delta u(x)
   + \int_{B_n} J(x-y)\,(v(y)-u(x))\,dy\Big]\varphi(x)\,dx
 = \int_{A_n} f(x)\,\varphi(x)\,dx,\\[2mm]
&\int_{B_n} 
   \Big[\int_{B_n} G(x-y)\,(v(y)-v(x))\,dy
        + \int_{A_n} J(x-y)\,(u(y)-v(x))\,dy\Big]\phi(x)\,dx
 = \int_{B_n} f(x)\,\phi(x)\,dx.
\end{align*}
Thus, the weak formulation of the system \eqref{hlocal}--\eqref{hnl}
is recovered as the Euler-Lagrange equations associated with the critical points 
of the energy functional $E$.

\subsection{Existence and uniqueness}

First, we state an auxiliary lemma that provides the necessary coercivity.

\begin{lemma}\label{coerc}
There exists a constant $C_c>0$ such that
\begin{align*}
\frac{1}{2}\int_{A} |\nabla u(x)|^2\,dx 
+ \frac{1}{2}\iint_{A\times B} J(x-y)(u(x)-v(y))^2\,dx\,dy 
+ \frac{1}{4}\iint_{B\times B} G(x-y)(v(x)-v(y))^2\,dx\,dy \\
\ge C_c\left(\int_{A} u^2(x)\,dx + \int_{B} v^2(y)\,dy\right),
\end{align*}
for all $u\in H^1(A)$, $v\in L^2(B)$ with
\[
\int_{A} u(x)\,dx + \int_{B} v(x)\,dx = 0.
\]
Here, $J$ and $G$ satisfy Hypothesis \ref{hipo}.
\end{lemma}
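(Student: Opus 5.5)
The argument is by contradiction and compactness, in the style of Poincaré inequalities for nonlocal Neumann problems. Suppose no such $C_c$ exists. Then there are $(u_k,v_k)\in H^1(A)\times L^2(B)$ with $\int_A u_k+\int_B v_k=0$, normalized so that $\|u_k\|_{L^2(A)}^2+\|v_k\|_{L^2(B)}^2=1$, and whose left-hand side tends to $0$. In particular $\|\nabla u_k\|_{L^2(A)}\to 0$ and $\iint_{A\times B}J(x-y)(u_k(x)-v_k(y))^2\,dx\,dy\to0$.

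\emph{Step 1: compactness for $u_k$.} I will assume $A$ is a bounded Lipschitz domain, or at least a finite union of connected Lipschitz components. This is needed for Rellich and for the classical Poincaré--Wirtinger inequality, and it holds in all the configurations of Section 2. The sequence $u_k$ is bounded in $H^1(A)$, so up to a subsequence $u_k\to u$ strongly in $L^2(A)$ with $\nabla u=0$. Hence $u$ is constant on each connected component of $A$.

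\emph{Step 2: the coupling forces $v_k$ close to a function of $u$.} By Hypothesis \ref{hipo}, $B_R(0)\subset\operatorname{supp}J$ with $R>\operatorname{diam}(A\cup B)$. Since $J$ is continuous and radial and $\operatorname{supp} J$ is the closure of $\{J>0\}$, I will argue that $J\ge c_0>0$ on the ball of radius $\operatorname{diam}(A\cup B)$. This step needs some care: if $J$ vanishes on a sphere, one uses instead that $\{J>0\}$ has full measure in that ball and works with the weight $J$ directly. I expect this to be the main technical point. Granting the lower bound,
\[
c_0\iint_{A\times B}(u_k(x)-v_k(y))^2\,dx\,dy\to0 .
\]
Expanding the square,
\[
\iint_{A\times B}(u_k(x)-v_k(y))^2 = |B|\|u_k\|^2_{L^2(A)}+|A|\|v_k\|^2_{L^2(B)}-2\int_Au_k\int_Bv_k .
\]
Combined with Cauchy--Schwarz, this shows that $u_k(x)-v_k(y)\to0$ in $L^2(A\times B)$. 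Fubini then gives, for a.e.\ $y$, that $u_k - v_k(y)$ is small in $L^2(A)$. Since $u_k\to u$ strongly, it follows that $u$ is a single constant $c$ on all of $A$, even when $A$ is disconnected, and that $v_k\to c$ strongly in $L^2(B)$. Concretely,
\[
\|v_k-c\|^2_{L^2(B)}|A| \le 2\iint_{A\times B}(u_k(x)-v_k(y))^2 + 2|B|\,\|u_k-c\|^2_{L^2(A)}\to 0 .
\]
To justify that $u$ is a single constant, apply the same estimate with $c$ replaced by the $L^2$-limit of $\int_A u_k/|A|$. Then $\iint (u(x)-c)^2=\lim\iint(u_k(x)-v_k(y))^2$ up to vanishing terms, so it is $0$.

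\emph{Step 3: contradiction.} Passing to the limit in the constraint gives $c(|A|+|B|)=0$, so $c=0$. But then $\|u_k\|^2_{L^2(A)}+\|v_k\|^2_{L^2(B)}\to 0$, contradicting the normalization.

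Note that the $G$-term is not used at all; only the coupling through $J$ matters. If a constant independent of $A_n$ is required later, this argument gives only a fixed-domain constant, and uniformity would need a separate quantitative version. For the lemma as stated, however, the compactness proof suffices.
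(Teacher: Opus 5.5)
Your route is the paper's: contradiction, normalization, Rellich on each component of $A$, identification of a single constant $c$, and $c=0$ from the constraint. You are right that the $G$-term is superfluous, and right that the strong convergence of $v_k$ has to be proved; the paper only asserts it ``using our hypotheses on the supports of the kernels''.

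\textbf{The gap} is the pointwise bound $J\ge c_0>0$ on $\overline{B_d(0)}$, $d=\operatorname{diam}(A\cup B)$, on which all of Step 2 rests. Hypothesis~\ref{hipo} only gives $B_R(0)\subset\operatorname{supp}J=\overline{\{J>0\}}$, and this does not imply the bound. Writing $J(z)=\rho(|z|)$, the zero set of $\rho$ in $[0,R)$ need only have empty interior. It may be a single radius, or even a fat Cantor set of radii. Then $J$ has no positive lower bound on $B_d(0)$, and $\{J=0\}\cap B_d(0)$ can have positive measure. So your fallback (``$\{J>0\}$ has full measure'') is false in general. Even when it holds, a weight that is positive a.e.\ but not bounded below does not let you pass from $\iint_{A\times B}J(x-y)(u_k(x)-v_k(y))^2\to0$ to convergence in unweighted $L^2(A\times B)$. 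That unweighted convergence is exactly what your Fubini/averaging step and your ``concrete'' estimate use.

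\textbf{The repair} uses what you already have from Step 1: integrate out one variable instead of bounding $J$ pointwise. For each component $C$ of $A$, set $b_C(y):=\int_C J(x-y)\,dx$. It is continuous on $\overline B$ and strictly positive there. Indeed, for $x_0\in C$ with $x_0\neq y$, the distances $|x-y|$ for $x$ near $x_0$ fill an interval inside $[0,R)$. The function $\rho$ cannot vanish on all of that interval, since $J$ would then vanish on an open annulus inside $B_R(0)$. Hence $b_C\ge\beta>0$ on $\overline B$, and
\[
\beta\,\|v_k-k_C\|^2_{L^2(B)}\le 2\iint_{C\times B}J(x-y)\big(u_k(x)-v_k(y)\big)^2\,dx\,dy+2\|J\|_\infty|B|\,\|u_k-k_C\|^2_{L^2(C)}\longrightarrow 0.
\]
So $v_k\to k_C$ strongly for every component $C$, all the $k_C$ coincide, and your Step 3 concludes.

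\textbf{A remark on the stronger hypothesis.} If you do assume $J\ge c_0$ on $\overline{B_d(0)}$ (for instance $J>0$ on the open ball $B_R(0)$), compactness is unnecessary. Your own expansion, together with $\int_B v=-\int_A u$, gives
\[
\iint_{A\times B}(u(x)-v(y))^2=|B|\,\|u\|^2_{L^2(A)}+|A|\,\|v\|^2_{L^2(B)}+2\Big(\int_A u\Big)^2 .
\]
This yields the lemma directly with $C_c=\frac{c_0}{2}\min\{|A|,|B|\}$ and no regularity assumption on $A$. Applied to $(A_n,B_n)$ with $d=\operatorname{diam}\Omega$, and using that $|A_n|,|B_n|$ stay bounded away from $0$, the constant is uniform in $n$. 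That settles the concern in your last paragraph.
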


\begin{proof}
Suppose, by contradiction, that the inequality does not hold. Then for each $n\in \mathbb{N}$ there exist $u_n \in H^1(A)$, $v_n \in L^2(B)$ such that
\[
\int_A u_n^2(x)\,dx + \int_B v_n^2(y)\,dy = 1, \qquad 
\int_A u_n(x)\,dx + \int_B v_n(y)\,dy = 0,
\]
and
\begin{align*}
\frac{1}{2}\int_A |\nabla u_n(x)|^2\,dx 
+ \frac{1}{2}\iint_{A\times B} J(x-y)(u_n(x)-v_n(y))^2\,dx\,dy 
+ \frac{1}{4}\iint_{B\times B} G(x-y)(v_n(x)-v_n(y))^2\,dx\,dy \le \frac{1}{n}.
\end{align*}

Since $(u_n)$ is bounded in $H^1(A)$ and $\int_A |\nabla u_n|^2 \to 0$, on each connected component $C\subset A$ we have, along a subsequence, $u_n \to k_C\in \mathbb{R}$ strongly in $L^2(C)$. Similarly, $(v_n)$ is bounded in $L^2(B)$, so up to a subsequence (still denoted by $v_n$), we have $v_n \rightharpoonup v$ weakly in $L^2(B)$. Using weak lower semicontinuity and recalling that both kernels $J$ and $G$ are nonnegative and the hypothesis on support of $J,G$, we have
\[
0 = \liminf_{n\to \infty} \iint_{A\times B} J(x-y)(u_n(x)-v_n(y))^2\,dy\,dx \ge \iint_{C\times B} J(x-y)(k_C-v(y))^2\,dy\,dx,
\]
and
\[
0 = \liminf_{n\to \infty} \iint_{B\times B} G(x-y)(v_n(x)-v_n(y))^2\,dx\,dy \ge \iint_{B\times B} G(x-y)(v(x)-v(y))^2\,dx\,dy.
\]
Hence, $v(x) = v(y) = c$ almost everywhere in $B$, with $c=k_C$. In addition, using our hypotheses on the supports of the kernels,
we also obtain that $v_n \to v$ strongly in $L^2(B)$. Passing to the limit in the zero mean condition gives
\[
0 = \int_A c\,dx + \int_B c\,dy = c\,(|A|+|B|) ,
\]
so $c=0$. This contradicts the normalization $\displaystyle \int_A u_n^2 + \int_B v_n^2 = 1$. Therefore, the desired inequality must hold for some constant $C_c>0$.
\end{proof}

\begin{theorem} Let $W_n\subset H^1(A_n)\times L^2(B_n)$ be the closed subspace given by \eqref{space} and assume Hypothesis \ref{hipo} for $J$ and $G$. Then, for each $f\in L^2(\Omega)$ with $\displaystyle\int_{A_n} f(x)dx+\int_{B_n} f(x)dx=0$, there exists a unique $(u,v)\in W_n$ satisfying \eqref{local}--\eqref{nonlocal} and being the minimizer of the functional \eqref{energy}.
\end{theorem}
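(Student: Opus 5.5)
The plan is to apply the Lax--Milgram theorem in the Hilbert space $W_n$ with the bilinear form $a_n$ of \eqref{innerproduct}, using Lemma~\ref{coerc} (applied with $A=A_n$, $B=B_n$) for coercivity. Afterwards I will identify the solution with the minimizer of \eqref{energy} by a convexity argument.

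\textbf{Structure of $W_n$.} Equip $H^1(A_n)\times L^2(B_n)$ with its product norm. The map $(u,v)\mapsto \int_{A_n}u+\int_{B_n}v$ is a bounded linear functional, since $\Omega$ is bounded and Cauchy--Schwarz applies. Hence $W_n$ is its kernel, a closed subspace and therefore a Hilbert space.

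\textbf{Continuity of $a_n$.} Because $J,G$ are bounded, I estimate, for instance,
\[
\iint_{A_n\times B_n} J(x-y)\,|u(x)-v(y)|\,|\varphi(x)-\phi(y)|\,dx\,dy \le C\|J\|_\infty |\Omega| \,\|(u,v)\|\,\|(\varphi,\phi)\|,
\]
using $(a-b)^2\le 2a^2+2b^2$ together with Cauchy--Schwarz. The $G$-term is handled the same way, and the gradient term is trivially bounded. Symmetry of $a_n$ follows from the radial symmetry of the kernels.

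\textbf{Coercivity.} This is the main point. Lemma~\ref{coerc} gives
\[
a_n((u,v),(u,v)) = 2\big(\text{LHS of Lemma~\ref{coerc}}\big) \ge 2C_c\big(\|u\|^2_{L^2(A_n)}+\|v\|^2_{L^2(B_n)}\big).
\]
Moreover $a_n((u,v),(u,v))\ge \|\nabla u\|^2_{L^2(A_n)}$. Averaging the two bounds yields
\[
a_n((u,v),(u,v))\ge \min(C_c,\tfrac12)\,\|(u,v)\|^2_{H^1\times L^2}.
\]
The real work sits inside Lemma~\ref{coerc}, which I take as given. Its contradiction argument uses the support hypothesis on $J$ (every point of $A_n$ interacts with every point of $B_n$) to force all the constants $k_C$ on the connected components of $A_n$ to equal $c$. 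This matters because $A_n$ may be disconnected, for example a union of holes or strips. Here I only need to check that the hypothesis $R>\operatorname{diam}(A_n\cup B_n)$ holds; it does, because $A_n\cup B_n=\Omega$ is fixed.

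\textbf{Right-hand side and existence.} The functional $(\varphi,\phi)\mapsto -\int_{A_n}f\varphi-\int_{B_n}f\phi$ is bounded on $W_n$ by Cauchy--Schwarz. Lax--Milgram then gives a unique $(u,v)\in W_n$ solving the weak formulation.

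\textbf{Test functions outside $W_n$.} The weak formulation is only tested against $W_n$, so I also treat the constant pair $(1,1)$. It gives $a_n((u,v),(1,1))=0$, because the gradient term vanishes and both kernel terms involve differences of the test function, which vanish for a constant. The right-hand side also vanishes, by the compatibility condition $\int_{A_n}f+\int_{B_n}f=0$. Every pair in $H^1\times L^2$ decomposes as an element of $W_n$ plus a multiple of $(1,1)$. Hence the identity holds for all test pairs, which gives the weak form of \eqref{local}--\eqref{nonlocal}, with the Neumann condition encoded in the usual natural way.

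\textbf{Minimization.} Write $E_n(w)=\tfrac12 a_n(w,w)-L(w)$, where $L$ is the right-hand-side functional above. For the Lax--Milgram solution $w_0$ and any $h\in W_n$,
\[
E_n(w_0+h)=E_n(w_0)+\big[a_n(w_0,h)-L(h)\big]+\tfrac12 a_n(h,h)=E_n(w_0)+\tfrac12 a_n(h,h).
\]
The bracket is zero by the weak formulation, and $\tfrac12 a_n(h,h)>0$ for $h\ne0$ by coercivity. So $w_0$ is the unique minimizer, which is consistent with the Euler--Lagrange computation preceding Lemma~\ref{coerc}.
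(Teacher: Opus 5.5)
Your proof is correct and follows the paper's route: you apply Lax--Milgram on $W_n$, with coercivity constant $\min\{C_c,\tfrac12\}$ obtained from Lemma~\ref{coerc} exactly as the paper does. Two of your steps fill in points the paper only asserts: testing with $(1,1)$ shows where the compatibility condition on $f$ is actually needed, and the identity $E_n(w_0+h)=E_n(w_0)+\tfrac12 a_n(h,h)$ justifies the minimization claim.
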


\begin{proof} We introduce the following bilinear form $a:W_n\times W_n\to \mathbb{R}$,
\begin{align}\label{bilinearform}
    a_n((u,v),(\varphi,\phi))&=\int_{A_n} \nabla u(x)\nabla \varphi(x)dx+\iint_{A_n\times B_n} J(x-y)(v(y)-u(x))(\phi(y)-\varphi(x))dydx \notag\\+&\frac{1}{2}\iint_{B_n\times B_n} G(x-y)(v(y)-v(x))(\phi(y)-\phi(x))dydx.
\end{align}
One can check that $a_n$ is continuous, symmetric, and coercive. The coercivity can be checked as follows: first, we have that
\begin{align*}
a_n((u,v),(u,v))=\int_{A_n}|\nabla u(x)|^2dx+\iint_{A_n\times B_n} J(x-y)(u(x)-v(y))^2dxdy \\ 
\qquad \qquad \qquad +\frac{1}{2}\iint_{B_n\times B_n}G(x-y)(v(x)-v(y))^2dxdy.
\end{align*}
Now, by Lemma \eqref{coerc}, we get that
\begin{align*}
\int_{A_n}|\nabla u|^2dx+\iint_{A_n\times B_n} &J(x-y)(u(x)-v(y))^2dxdy+\iint_{B_n\times B_n}G(x-y)(v(x)-v(y))^2dxdy\geq \\
&\frac{1}{2}\int_{A_n}|\nabla u|^2dx+C_c\bigg(\int_{A_n} u^2(x)dx+\int_{B_n} v^2(x)\bigg)\geq \min{\bigg\{\frac{1}{2},C_c\bigg\}}(\|u\|_{H^1(A_n)}^2+\|v\|_{L^2(B_n)}^2),
\end{align*}
and, therefore, coercivity holds. 

Now, the linear functional
\begin{align*}
    F_n(\varphi,\phi)=\int_{A_n}\varphi(x)f(x)dx+\int_{B_n} \phi(x)f(x)dx,
\end{align*} is continuous on $W_n$, and since $\displaystyle\int_\Omega f(x)dx =0$, it follows from the Lax-Milgram Theorem that there exists a unique $(u,v)\in W_n$ satisfying
\begin{equation}\label{solution}
a_n((u,v),(\varphi,\phi))=-F_n(\varphi,\phi), \quad \quad \forall (\varphi,\phi)\in W_n.  
\end{equation}
Moreover, the pair $(u,v)$ is the unique minimizer of the energy functional.
\end{proof}

\section{Local in the Holes} \setcounter{equation}{0}

To begin the analysis of the first case, our goal is to prove Theorem \ref{localinballs}. The strategy follows the general homogenization framework, we will first establish uniform a priori estimates for the local and nonlocal components, and then use these bounds to pass to the limit in the weak sense. For this, we will prove the auxiliary lemma 
(see \cite{wolanski,marc1} for related results). Notice that in this case $B_n$ is path-connected.

\begin{lemma}[Uniform nonlocal Poincaré inequality on the perforated domains]\label{lem:unif-poincare}
Let $\Omega \subset \mathbb{R}^N$ be a bounded smooth domain and 
$B_n := \Omega \setminus \overline{T_n},$ which is path-connected.
Then there exists a constant $C_P>0$, independent of $n$, such that for every 
$v \in L^2(B_n)$
the following uniform estimate holds:
\begin{equation}\label{poincaregeneralizated}
\int_{B_n} \bigg|v(x)-\frac{1}{|B_n|}\int_{B_n} v(y)dy\bigg|^2\,dx
\;\le\;
C_P \iint_{B_n\times B_n}
G(x-y)\,\big(v(x)-v(y)\big)^2\,dy\,dx.
\end{equation}
\end{lemma}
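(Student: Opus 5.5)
We plan to reduce the estimate on the perforated domains to a fixed-domain nonlocal Poincaré inequality, using that the holes are small and that $G(0)>0$. By continuity of $G$ there are $\delta>0$ and $c_0>0$ with $G(z)\ge c_0$ for $|z|\le\delta$. The variance on the left of \eqref{poincaregeneralizated} is at most $\int_{B_n}|v-c|^2$ for any constant $c$. It also equals
\[
\frac{1}{2|B_n|}\iint_{B_n\times B_n}(v(x)-v(y))^2\,dx\,dy .
\]
Since $|B_n|\ge (1-X)|\Omega|/2$ for $n$ large, it therefore suffices to show
\[
\iint_{B_n\times B_n}(v(x)-v(y))^2\,dx\,dy\le C\iint_{B_n\times B_n,\,|x-y|<\delta}(v(x)-v(y))^2\,dx\,dy ,
\]
with $C$ independent of $n$.

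\textbf{Chaining.} For a pair $(x,y)$ with $|x-y|$ large, we connect $x$ to $y$ through intermediate points $z_1,\dots,z_m\in B_n$ with consecutive distances less than $\delta$ and $m\le M=M(\Omega,\delta)$. We then write
\[
(v(x)-v(y))^2\le M\sum_i (v(z_i)-v(z_{i+1}))^2 .
\]
Instead of single points we average over sets: cover $\overline\Omega$ by finitely many balls $Q_1,\dots,Q_K$ of radius $\delta/4$, with $K$ independent of $n$. The sets $Q_k\cap B_n$ have measure at least $c|Q_k\cap\Omega|$ uniformly in $n$, because the holes occupy only a fixed fraction of each cell, are uniformly separated from one another, and $\Omega$ is smooth. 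For $x\in Q_k\cap B_n$ and $y\in Q_l\cap B_n$ we take a chain of balls $Q_k=Q_{i_0},Q_{i_1},\dots,Q_{i_m}=Q_l$ with consecutive balls overlapping, which is possible since $\Omega$ is connected. We then integrate the inequality
\[
(v(x)-v(y))^2\le (m+2)\Big[(v(x)-v(z_1))^2+\sum_i (v(z_i)-v(z_{i+1}))^2+(v(z_m)-v(y))^2\Big]
\]
over $z_i\in Q_{i_j}\cap B_n$, normalizing by the measures. Each term involves points at distance less than $\delta$, so it is bounded by $c_0^{-1}$ times the $G$-energy divided by $|Q_{i_j}\cap B_n|\,|Q_{i_{j+1}}\cap B_n|$. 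These quantities are bounded below uniformly in $n$, which gives the claim with a constant depending only on $K$, $c$, $c_0$ and $|\Omega|$.

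\textbf{Main obstacle.} The step needing care is the uniform lower bound $|Q_k\cap B_n|\ge c>0$ independent of $n$. We would argue it through the periodic structure: for $n$ large each $Q_k\cap\Omega$ contains of order $n^N$ full cells, and in each cell the fraction of $B_n$ equals $|Y^*|/|Y|$. For the finitely many small $n$, the bound follows directly since $B_n$ is open and nonempty in each ball intersecting $\Omega$. An alternative route is a compactness-contradiction argument as in Lemma \ref{coerc}, using $\chi_{B_n}\rightharpoonup 1-X$ weakly-$*$. However, the lack of compactness of $v_n$ in $L^2$ makes that route delicate, so we prefer the explicit chaining argument.
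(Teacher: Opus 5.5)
Your argument is correct for $n$ large. It rests on the same mechanism as the paper's proof, which adapts Wolanski's lemma: chaining across a cover at a fixed scale on which $G\ge c_0$, with chain length bounded independently of $n$, so that the $O(1/n)$ holes never block the nonlocal jumps.

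The implementation differs, however. The paper discretizes $v$ into cube averages $a_i^n$. It uses the discrete identity $\sum_i w_i^n|a_i^n|^2=\frac{1}{2|B_n|}\sum_{i,k}w_i^nw_k^n(a_i^n-a_k^n)^2$, chains cubes with a multiplicity count, and appeals to density of step functions. Throughout it invokes only the global bound $|B_n|\ge\gamma|\Omega|$.

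You instead work with the continuous identity $\int_{B_n}|v-\bar v|^2=\frac{1}{2|B_n|}\iint_{B_n\times B_n}(v(x)-v(y))^2$ and average the intermediate chain points by Tonelli. This buys two things:
\begin{enumerate}[(i)]
\item The estimate holds directly for every $v\in L^2(B_n)$. There is no step-function reduction and no need to control $\int_{B_n}|v-v_h|^2$.
\item You make explicit an ingredient the paper leaves implicit: a lower bound on $|Q_k\cap B_n|$ for every element of the cover. This is needed because the masses of intermediate cells appear in denominators.
\end{enumerate}
For large $n$ the cleanest proof of that bound uses the weak-$*$ convergence: $|Q_k\cap B_n|=\int_\Omega\chi_{B_n}\chi_{Q_k}\,dx\to(1-X)|Q_k\cap\Omega|>0$, after discarding balls that do not meet $\Omega$.

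Your justification for the finitely many small $n$ fails as stated. For small $n$ the holes $\frac1n(k\ell+\overline T)$ need not be small compared with $\delta$. A ball $Q_k$ of radius $\delta/4$ can then lie inside $\overline{T_n}$, so $Q_k\cap B_n=\emptyset$ and any chain through $Q_k$ breaks.

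The repair is easy. For each fixed $n$, rerun the argument with a cover of $\overline{B_n}$ by balls centered in $\overline{B_n}$. Each such ball meets $B_n$ in positive measure, and the overlap graph is connected because $B_n$ is. This gives a finite $C_n$, and you take the maximum over $n<n_0$. In any case, only uniformity for $n\ge n_0$ is used later in the homogenization.
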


\begin{proof}
First, we observe that each hole in $T_n$ is of the form $\frac{1}{n}(j\ell + T):=T_n(j)$; therefore, it has a diameter of order $1/n$
and is strictly contained in the scaled cell $\frac{1}{n}(j\ell + Y)$.  
Consequently, the total volume of the perforations stabilizes:
\[
|T_n| \to \frac{|T|}{|Y|}\,|\Omega|, 
\qquad
|B_n| \to \Big(1-\frac{|T|}{|Y|}\Big)|\Omega|>0.
\]
The periodic construction ensures that the total measure of the perforations does not vanish as $n\to\infty$.
Indeed, writing each periodic cell and hole as
\[
Y_{n}(j) := \tfrac{1}{n}(j\ell + Y), \qquad 
T_{n} (j):= \tfrac{1}{n}(j\ell + T),
\]
we have $|Y_{n}(j)| = |Y|/N_n$ and $|T_{n}(j)| = |T|/N_n$, here $N_n$ denotes the quantity of holes in step $n$. 
Let $I_n = \{\,j\in\mathbb{Z}^N : Y_{n}(j)\subset\Omega\,\}$ denote the set of interior cells.
Then
\[
|T_n| 
= \sum_{j\in I_n} |T_{n}(j)| 
= \frac{|T|}{N_n}|I_n|.
\]
Since $\displaystyle \frac{|I_n|}{N_n}\to \frac{|\Omega|}{|Y|}$ as $n\to\infty$, because $\displaystyle \sum_{j\in I_n}|Y_{n}(j)| = |I_n|\displaystyle\frac{|Y|}{N_n}\to |\Omega|$, we conclude that
\[
|T_n| \longrightarrow \frac{|T|}{|Y|}\,|\Omega|,
\qquad 
|B_n| = |\Omega|-|T_n| \longrightarrow 
\Big(1-\frac{|T|}{|Y|}\Big)|\Omega| > 0.
\]
Hence, although each individual hole $T_n(j)$ has diameter $\mathcal{O}(1/n)\to 0$, 
the total volume fraction of the perforations remains constant as $n\to\infty$. Thus, we can affirm there exists $\gamma\in(0,1)$ such that 
\begin{equation}\label{eq:gamma}
|B_n|\ge \gamma |\Omega|   \end{equation}
for all $n$ large enough. 

Now, we can follow the arguments in \cite[Lemma 3.1]{wolanski}. Suppose, without loss of generality, that  $\int_{B_n} v(y)dy=0$. Fix a small cube size $h>0$ and let us divide $\mathbb{R}^N$ into closed cubes $(Q_i)_{i\in\mathbb{Z}^N}$ of side length $h$.
For each $n$, consider the set of indices
\[
I_n := \{\, i : \emptyset \neq Q_i \cap B_n \subset \Omega \,\}.
\]
For $i\in I_n$, set
\[
w_i^n := |Q_i \cap B_n|,
\qquad
a_i^n := \frac{1}{w_i^n}\int_{Q_i \cap B_n} v(x)\,dx.
\]
Then $\sum_{i\in I_n} w_i^n a_i^n = 0$. Denoting by $v_h$ the piecewise constant function $v_h(x) = a_i^n$ for $x \in Q_i\cap B_n$,
one has the estimate
\begin{equation}\label{eq:L2-discrete}
\int_{B_n} |v(x)|^2\,dx
\le 2\int_{B_n}|v(x)-v_h(x)|^2\,dx
+ 2\sum_{i\in I_n} w_i^n |a_i^n|^2.
\end{equation}
By density of step functions in $L^2(B_n)$, it suffices to establish
\eqref{poincaregeneralizated} for $v=v_h$. We can write,
\begin{equation}\label{eq:variance-identity}
\sum_{i\in I_n} w_i^n |a_i^n|^2
=\frac{1}{2|B_n|}\sum_{i,k\in I_n} w_i^n w_k^n (a_i^n - a_k^n)^2.
\end{equation}
Since $G$ is continuous and nonnegative, there exist constants $c_G > 0$ and $r > 0$
such that $G(x-y) \ge c_G$ whenever $|x-y| \le r$.
Then, if the centers of $Q_i$ and $Q_k$ are at distance less than $r/2$ and using \eqref{eq:gamma} we get 
\[
w_i^n w_k^n (a_i^n - a_k^n)^2
\le \frac{1}{2c_G\, \gamma\, |\Omega|}\!
\iint_{(Q_i\cap B_n)\times(Q_k\cap B_n)}\!
G(x-y)\,(a_i^n - a_k^n)^2\,dx\,dy.
\]

The periodic geometry of $B_n$ implies the existence of an integer $L>0$,
independent of $n$, such that for any $i,k\in I_n$ there exists a chain 
$i=j_1,\dots,j_\ell=k$ with $\ell\le L$ satisfying that each $Q_{j_m}\cap B_n\neq\emptyset$;
and the centers of $Q_{j_m}$ and $Q_{j_{m+1}}$ are at distance $<r/2$.
Hence
\[
(a_i^n - a_k^n)^2
\le L\sum_{m=1}^{\ell-1} (a_{j_m}^n - a_{j_{m+1}}^n)^2.
\]
Multiplying by $w_i^n w_k^n$ and summing all pairs $(i,k)$, 
each neighboring pair of cubes appears at most $M$ times, 
where $M$ depends only on $\Omega$, $Y$, and $T$.
Therefore there exists $C_P>0$, independent of $n$, such that
\begin{align}\label{nlpoincareconst}
\sum_{i\in I_n} w_i^n |a_i^n|^2
\le
C_P \iint_{B_n\times B_n}
G(x-y)\,(a_i^n-a_k^n)^2\,dy\,dx.
\end{align}
Since $v_h\to v$ when $h\to 0$, and using \eqref{nlpoincareconst} with \eqref{eq:L2-discrete}, we obtain \eqref{poincaregeneralizated}.
\end{proof}

The first step to obtain the homogenization result consists in showing that the sequences $(u_n,v_n)$ remain uniformly bounded 
in their respective spaces. This is essential to guarantee the existence of weakly convergent 
subsequences and to identify the limit problem. In particular, by combining the structure of 
the transmission term with the uniform Poincaré-type inequality (Lemma~\ref{lem:unif-poincare}), 
we can control the $L^2$–norms of $u_n\in H^1(A_n)$ and $v_n\in L^2(B_n)$ independently of $n$. 
Once these bounds are obtained, we can ensure that the characteristic functions 
$\chi_{A_n}u_n$ and $\chi_{B_n}v_n$ admit weak limits to $u\in L^2(\Omega)$ and $v\in L^2(\Omega)$, respectively, 
which will be a key fact in the proof of Theorem~\ref{localinballs}.

\begin{lemma}\label{uvbounded}
Let $(u_n,v_n)\in W_n$ be the unique solution of \eqref{hlocal}--\eqref{hnl}. 
If the integral 
\[
\Big|\int_{B_n} v_n(x)\,dx\Big|
\]
is uniformly bounded with respect to $n$, then $u_n\in L^2(A_n)$ 
and $v_n\in L^2(B_n)$ are uniformly bounded.
\end{lemma}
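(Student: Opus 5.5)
Throughout this section $A_n=T_n$ is the union of the small holes and $B_n=\Omega_n$ is the connected perforated set. The plan is to test the weak formulation with the solution itself and close an energy estimate with $n$-independent constants. Choosing $(\varphi,\phi)=(u_n,v_n)\in W_n$ gives
\[
\int_{A_n}|\nabla u_n|^2+\iint_{A_n\times B_n}J(x-y)(u_n(x)-v_n(y))^2+\tfrac12\iint_{B_n\times B_n}G(x-y)(v_n(x)-v_n(y))^2
=-\int_{A_n}f_nu_n-\int_{B_n}f_nv_n,
\]
and the right-hand side is at most $\|f_n\|_{L^2(\Omega)}\big(\|u_n\|_{L^2(A_n)}+\|v_n\|_{L^2(B_n)}\big)$. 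Since $f_n\to f$ in $L^2$, the norms $\|f_n\|_{L^2(\Omega)}$ are uniformly bounded. Lemma \ref{coerc} cannot be used here: its constant depends on $(A,B)$, and $A_n$ has a growing number of components. So I would rebuild coercivity by hand, using Lemma \ref{lem:unif-poincare} together with the assumption on the mean of $v_n$.

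\textbf{Step 1: bound $v_n$.} Let $\bar v_n=\frac1{|B_n|}\int_{B_n}v_n$. By the hypothesis and \eqref{eq:gamma}, $|\bar v_n|\le C$ uniformly in $n$. Lemma \ref{lem:unif-poincare} then gives
\[
\|v_n\|_{L^2(B_n)}^2\le 2C_P\iint_{B_n\times B_n} G(x-y)(v_n(x)-v_n(y))^2+2|B_n|\bar v_n^2\le 4C_P\,\mathcal E_n+C,
\]
where $\mathcal E_n$ denotes the left-hand side of the energy identity above.

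\textbf{Step 2: bound $u_n$.} This step uses the transmission term. By Hypothesis \ref{hipo}, $J(x-y)\ge c_J>0$ for all $x\in A_n$, $y\in B_n$: indeed $|x-y|\le\operatorname{diam}\Omega<R$ and $J$ is positive on $B_R(0)$. A cleaner route is to bound $J$ below on the compact set $\overline{B}_{\operatorname{diam}\Omega}$, where it is continuous and positive. Then for a.e. $x\in A_n$,
\[
u_n(x)^2\le 2(u_n(x)-v_n(y))^2+2v_n(y)^2 .
\]
Multiplying by $J(x-y)$, integrating over $A_n\times B_n$, and using $\int_{B_n}J(x-y)\,dy\ge c_J|B_n|\ge c_J\gamma|\Omega|$ together with $\int_{A_n} J(x-y)\,dx\le 1$, we get
\[
c_J\gamma|\Omega|\,\|u_n\|_{L^2(A_n)}^2\le 2\iint J(u_n(x)-v_n(y))^2+2\|v_n\|_{L^2(B_n)}^2\le C(\mathcal E_n+1).
\]
The gradient term is not needed, which is convenient because the holes are disconnected.

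\textbf{Step 3: close.} Combining the two steps gives $Z_n^2\le C(\mathcal E_n+1)$, where $Z_n=\|u_n\|_{L^2(A_n)}+\|v_n\|_{L^2(B_n)}$. The energy identity gives $\mathcal E_n\le C Z_n$. Hence $Z_n^2\le C(Z_n+1)$, so $Z_n$ is uniformly bounded, and as a by-product so is $\mathcal E_n$. The main obstacle is making sure that every constant ($C_P$, $\gamma$, $c_J$) is independent of $n$. For $C_P$ this is exactly what Lemma \ref{lem:unif-poincare} provides. For $c_J$ it follows from the support assumption on $J$. Note that the hypothesis on $\int_{B_n}v_n$ cannot be dropped, since it is what controls the mean that the Poincaré inequality does not see.
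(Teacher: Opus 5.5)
Your proof is correct and follows essentially the paper's route: Lemma~\ref{lem:unif-poincare} plus the bound on $\int_{B_n}v_n$ control $v_n$, and the uniform lower bound on $\int_{B_n}J(x-y)\,dy$ in the transmission term controls $u_n$ without the gradient. It closes more carefully than the paper, which asserts that $\iint_{B_n\times B_n}G(x-y)(v_n(y)-v_n(x))^2$ is bounded independently of $n$ ``because $(u_n,v_n)$ minimizes the energy'', although minimality only gives a bound by $\|f_n\|_{L^2(\Omega)}Z_n$; your $\mathcal E_n\le\|f_n\|_{L^2(\Omega)}Z_n$ together with $Z_n^2\le C(\mathcal E_n+1)$ actually derives the bound. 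One correction to your closing aside: the hypothesis on $\int_{B_n}v_n$ can be dropped, because your Step~2 applied to $(u_n(x)-\bar v_n)^2\le 2(u_n(x)-v_n(y))^2+2(v_n(y)-\bar v_n)^2$ gives $\|u_n-\bar v_n\|_{L^2(A_n)}^2\le C\mathcal E_n$ using only Lemma~\ref{lem:unif-poincare}, and the constraint $\int_{A_n}u_n+\int_{B_n}v_n=0$ gives $|\Omega|\,|\bar v_n|=\big|\int_{A_n}(u_n-\bar v_n)\big|\le |A_n|^{1/2}\|u_n-\bar v_n\|_{L^2(A_n)}$, so the mean is controlled by the energy as well.
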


\begin{proof}
Since $(u_n,v_n)\in W_n$ is a solution with 
$\big|\!\int_{B_n} v_n(x)\,dx\big|$ uniformly bounded 
and $B_n$ is connected, we can apply Lemma~\ref{lem:unif-poincare}. 
Hence,
\[
\int_{B_n} 
\Big|v_n(x)-\fint_{B_n} v_n(x)\,dx\Big|^2 dx
\le C_P
   \iint_{B_n\times B_n} 
   G(x-y)\,\big(v_n(y)-v_n(x)\big)^2\,dx\,dy 
   < C,
\]
which holds because $(u_n,v_n)$ minimizes the energy $E_n(u_n,v_n)$, 
so this quantity is uniformly bounded by a constant $C>0$ 
(independent of $n$). Consequently, 
$\|v_n\|_{L^2(B_n)}$ is uniformly bounded.

We now prove the uniform boundedness of 
\begin{equation}\label{u_nbound}
\int_{A_n} u_n^2(x)\,dx.
\end{equation}
Indeed, from the definition of the energy,
\begin{align*}
E_n(u_n,v_n)
&=\frac{1}{2}\int_{A_n} |\nabla u_n(x)|^2\,dx
+\frac{1}{4}\iint_{B_n\times B_n} 
   G(x-y)\,(v_n(y)-v_n(x))^2\,dx\,dy \\
&\quad+\frac{1}{2}\iint_{A_n\times B_n} 
   J(x-y)\,(v_n(y)-u_n(x))^2\,dy\,dx
+\int_{A_n} f_n(x)u_n(x)\,dx
+\int_{B_n} f_n(x)v_n(x)\,dx 
< C.
\end{align*}
Therefore,
\begin{align*}
\frac{1}{2}\int_{A_n} |\nabla u_n|^2\,dx
&+\frac{1}{4}\iint_{B_n\times B_n} 
   G(x-y)\,(v_n(y)-v_n(x))^2\,dx\,dy
+\frac{1}{2}\iint_{A_n\times B_n} 
   J(x-y)\,(v_n(y)-u_n(x))^2\,dy\,dx  \\
&\le C
+\Big|\int_{A_n} f_n(x)u_n(x)\,dx\Big|
+\Big|\int_{B_n} f_n(x)v_n(x)\,dx\Big|.
\end{align*}
  
Applying the arithmetic inequality 
$ab\le \frac{K^2}{2}a^2+\frac{1}{2K^2}b^2$ 
for some $K>0$ to the last term, we get
\begin{align*}
\frac{1}{2}\int_{A_n} |\nabla u_n|^2\,dx
&+\frac{1}{4}\iint_{B_n\times B_n} 
   G(x-y)\,(v_n(y)-v_n(x))^2\,dx\,dy
+\frac{1}{2}\iint_{A_n\times B_n} 
   J(x-y)\,(v_n(y)-u_n(x))^2\,dy\,dx \\
&\le C
+\Big|\int_{A_n} f_n(x)u_n(x)\,dx\Big|
+\frac{1}{2}\int_{B_n} K^2 f_n^2(x)\,dx
+\frac{1}{2}\int_{B_n} \frac{v_n^2(x)}{K^2}\,dx.
\end{align*}
To estimate the last term, we write
\begin{align*}
\bigg|\int_{B_n} v_n^2dx\bigg|
&\le \bigg|\int_{B_n} \bigg(v_n - \fint_{B_n} v_n \bigg)^2\,dx\bigg|
  + |B_n|\bigg(\,\fint_{B_n} v_n\bigg)^2 + 2\int_{B_n}v_n\,dx \fint_{B_n}v_n\\
 & \le \int_{B_n} \bigg|\bigg(v_n - \fint_{B_n} v_n \bigg)^2\bigg|\,dx
  + \frac{1}{|B_n|}\bigg(\int_{B_n}v_n\,dx\bigg)^2+\frac{2}{|B_n|}\bigg(\,\int_{B_n} v_n\,dx\bigg)^2.
\end{align*}
By the Poincaré-type inequality \eqref{poincaregeneralizated} and by the hypothesis that 
$$\bigg|\displaystyle\int_{B_n} v_n(x)\,dx\bigg|$$ is uniformly bounded
and the uniform lower bound $|B_n|\ge\gamma>0$, there exists $C$ independent of $n$, such that
\[
\int_{B_n} v_n^2\,dx
\le C_P \iint_{B_n\times B_n} 
G(x-y)\,\bigl(v_n(y)-v_n(x)\bigr)^2\,dx\,dy + C.
\]
Consequently,
\[
\frac12\int_{B_n}\frac{v_n^2(x)}{K^2}\,dx
\le \frac{C_P}{2K^2}\iint_{B_n\times B_n} 
G(x-y)\,\bigl(v_n(y)-v_n(x)\bigr)^2\,dx\,dy + \frac{C}{2K^2}.
\]
Choosing $K>0$ large enough so that $\dfrac{C_P}{2K^2}\le \dfrac18$ 
and absorbing $\dfrac{C}{2K^2}$ into the constant $C$ on the right-hand side
of the energy inequality, we obtain
\begin{align*}
\frac{1}{2}\int_{A_n} |\nabla u_n|^2\,dx
&+\frac{1}{8}\iint_{B_n\times B_n} 
   G(x-y)\,(v_n(y)-v_n(x))^2\,dx\,dy
+\frac{1}{2}\iint_{A_n\times B_n} 
   J(x-y)\,(v_n(y)-u_n(x))^2\,dy\,dx \\
&\le C
+\Big|\int_{A_n} f_n(x)u_n(x)\,dx\Big|
+\underbrace{\frac{K^2}{2}\int_{B_n} f_n^2(x)\,dx}_{\le \hat{C}}.
\end{align*}
Setting $C+\hat{C}=C$ and rearranging terms, we have
\begin{align*}
\frac{1}{8}\iint_{B_n\times B_n} 
   G(x-y)&\,(v_n(y)-v_n(x))^2\,dx\,dy
+\frac{1}{2}\iint_{A_n\times B_n} 
   J(x-y)v_n^2(y)\,dx\,dy
+\frac{1}{2}\iint_{A_n\times B_n} 
   J(x-y)u_n^2(x)\,dy\,dx \\
&-\iint_{A_n\times B_n} 
   J(x-y)v_n(y)u_n(x)\,dy\,dx
+\frac{1}{2}\int_{A_n} |\nabla u_n|^2\,dx
\le C+\Big|\int_{A_n} f_n(x)u_n(x)\,dx\Big|.
\end{align*}
Since
\[
\frac{1}{8}\iint_{B_n\times B_n} 
   G(x-y)\,(v_n(y)-v_n(x))^2\,dx\,dy \ge 0,\quad\text{and}\quad \frac{1}{2}\iint_{A_n\times B_n} 
   J(x-y)v_n^2(y)\,dx\,dy\ge 0,
\]
and
\[
b_n(x)=\int_{B_n}J(x-y)\,dy
\]
is strictly positive, there exists a constant $c_J>0$ such that 
$b_n(x)\ge c_J$. Therefore, we can rewrite this as
\begin{align*}
\frac{c_J}{2}\int_{A_n}u_n^2(x)\,dx
+\frac{1}{2}\int_{A_n} |\nabla u_n|^2\,dx
\le C+\Big|\int_{A_n} f_n(x)u_n(x)\,dx\Big|+\iint_{A_n\times B_n} 
   J(x-y)v_n(y)u_n(x)\,dy\,dx.
\end{align*}

Using the fact that $J$ is bounded, $|J(x-y)|\le C_J$,  we get 
\begin{align*}
\frac{c_J}{2}\int_{A_n}u_n^2(x)\,dx
+\frac{1}{2}\int_{A_n} |\nabla u_n|^2\,dx \le C_J\iint_{A_n\times B_n}
   \Big(\frac{v_n^2(y)}{2\tilde{k}^2}+\frac{\tilde{k}^2u_n^2(x)}{2}\Big)\,dx\,dy+\frac{1}{2}\int_{A_n}\frac{f_n^2(x)}{k^2}\,dx+\frac{1}{2}\int_{A_n}k^2u_n^2(x)\,dx,
\end{align*}
where $\tilde{k},k\in\mathbb{R}$ are constants such that 
$C_J\frac{\tilde{k}^2}{2|B_n|}=\frac{c_J}{8}$ and 
$\frac{k^2}{2}=\frac{c_J}{8}$.
Since $v_n\in L^2(B_n)$ and $f_n\in L^2(\Omega)$ are bounded, 
there exists a constant $C>0$ such that
\begin{align}\label{remarkpartialderiv}
\frac{1}{2}\int_{A_n} |\nabla u_n|^2\,dx
+\frac{c_J}{2}\int_{A_n}u_n^2(x)\,dx
\le C
+\frac{c_J}{8}\int_{A_n} u_n^2(x)\,dx
+\frac{c_J}{8}\int_{A_n}u_n^2(x)\,dx.
\end{align}
which implies
\[
\frac{c_J}{4}\int_{A_n}u_n^2(x)\,dx \le C.
\]
Therefore, $\|u_n\|_{L^2(A_n)}$ is uniformly bounded.
\end{proof}

\begin{remark}\label{stripremarkb} {\rm From \eqref{remarkpartialderiv} we also ensure that each partial derivative in 
$$\sum_{i=1}^N\int_{A_n}\bigg|\frac{\partial u_n}{\partial x_i}\bigg|^2dx$$
is uniformly bounded.}
\end{remark}

The following lemma proves that if $\displaystyle\int_{A_n}u_n(x)dx$ is uniformly bounded then we again obtain the boundedness of $v_n\in L^2(B_n)$ and $u_n\in L^2(A_n)$.
\begin{lemma}\label{uvboundf}
Let $(u_n,v_n)\in W_n$ be the unique solution of \eqref{hlocal}--\eqref{hnl}. 
If $\displaystyle\int_{B_n} v_n(y)\,dy \longrightarrow \pm\infty$, as $ n\to\infty,$
then necessarily
\[
\int_{A_n} u_n(x)\,dx \longrightarrow \mp\infty, 
\qquad \text{as } n\to\infty.
\]
\end{lemma}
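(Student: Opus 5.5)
The statement follows directly from the definition of the space $W_n$ in \eqref{space}, so the plan is simply to use the constraint built into that space. Since $(u_n,v_n)\in W_n$ is the solution given by the existence theorem of Section~3, it satisfies, for every $n$,
\[
\int_{A_n} u_n(x)\,dx+\int_{B_n} v_n(x)\,dx=0 .
\]
Both integrals are finite for each fixed $n$, because $u_n\in H^1(A_n)\subset L^1(A_n)$ and $v_n\in L^2(B_n)\subset L^1(B_n)$ on the bounded sets $A_n,B_n\subset\Omega$. So this identity can be rewritten as
\[
\int_{A_n} u_n(x)\,dx=-\int_{B_n} v_n(x)\,dx \qquad\text{for all } n .
\]

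Next, take $\int_{B_n} v_n\,dy\to+\infty$. For every $M>0$ there is $n_0$ with $\int_{B_n} v_n\,dy>M$ for $n\ge n_0$. The identity then gives $\int_{A_n}u_n\,dx<-M$ for the same $n$, that is, $\int_{A_n}u_n\,dx\to-\infty$. The case $\int_{B_n}v_n\,dy\to-\infty$ is identical with the signs reversed.

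No step is a real obstacle; the only point to check is that the zero-mean constraint is part of $W_n$ and is not merely imposed on the data $f_n$. Even so, I would add a remark that explains the lemma's role. Combined with Lemma~\ref{uvbounded}, the dichotomy shows that in order to bound $(u_n,v_n)$ uniformly it suffices to exclude divergence of \emph{one} of the two means. A later argument can then try to rule out $\int_{A_n}u_n\to\mp\infty$, for instance by using the energy bound and the coupling term $\iint_{A_n\times B_n}J(x-y)(v_n(y)-u_n(x))^2$, which penalizes $u_n$ and $v_n$ having means of opposite signs that both blow up.
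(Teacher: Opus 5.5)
Your proof is correct and is the same as the paper's. Both rewrite the zero-mean constraint built into $W_n$ as $\int_{A_n}u_n\,dx=-\int_{B_n}v_n\,dy$ and read off divergence with the opposite sign; your finiteness check and explicit $M$-argument only spell out what the paper leaves implicit. (Your closing remark is outside the proof, but note that Lemma~\ref{uvbounded} needs the means to be \emph{bounded}, which means ruling out divergence along every subsequence, not just of the full sequence.)
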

\begin{proof}
By the definition of $W_n$, we have for each $n$,
\[
  \int_{A_n} u_n(x)\,dx + \int_{B_n} v_n(y)\,dy = 0.
\]
Hence
\[
  \int_{A_n} u_n(x)\,dx 
  = - \int_{B_n} v_n(y)\,dy.
\]
By hypothesis,
\[
  \int_{B_n} v_n(y)\,dy \longrightarrow \pm\infty,
\]
and therefore
\[
  \int_{A_n} u_n(x)\,dx \longrightarrow \mp\infty
\]
as $n\to\infty$. This proves the claim.
\end{proof}

Now, let us consider the bilinear form associated with \eqref{limhlocal}--\eqref{limhnl}, defined on the space
\[
\mathcal{W} := \Bigl\{ (u,v) \in L^2(\Omega) \times L^2(\Omega) \;\big|\; \int_\Omega u(x)\,dx + \int_\Omega v(x)\,dx = 0 \Bigr\},
\]
given explicitly by
\begin{align}\label{bilinearformlocalinholes}
a_\infty((u,v),(\varphi,\phi)) &=
\iint_{\Omega \times \Omega} J(x-y) \bigl[(1-X) u(x)-Xv(y) \bigr] \bigl( \phi(y) - \varphi(x) \bigr) \, dy \, dx \notag \\
&\quad + \frac{1}{2}\iint_{\Omega \times \Omega} G(x-y) \bigl[ (1-X) v(y) - (1-X) v(x) \bigr] (\phi(y)-\phi(x)) \, dy \, dx.
\end{align}
Similarly as in the previous section, by Lax-Milgram, we have that
\begin{align}\label{lmlocalinholes}
a_\infty((u,v),(\varphi,\phi))=-F_\infty(\varphi,\phi),\end{align}
with
$$F_\infty(\varphi,\phi)=\displaystyle\int_\Omega Xf\varphi(x)dx+\int_\Omega (1-X)f\phi(x) dx.$$

Since the sequences $u_n \in H^1(A_n)$ and $v_n \in L^2(B_n)$ are uniformly bounded in $L^2$, we are able to pass to the limit as $n \to \infty$.  
In this way, we obtain the homogenized bilinear form that gives the proof of Theorem~\ref{localinballs}.

\begin{proof}[Proof of the Theorem \ref{localinballs}]
We begin with the weak formulation of~\eqref{hlocal} for test functions $(\varphi_n,0)$:
\begin{equation}\label{hlocalint}
-\!\int_{A_n} f_n(x)\,\varphi_n(x)\,dx
= \int_{A_n}\nabla u_n(x)\!\cdot\!\nabla\varphi_n(x)\,dx
  - \iint_{A_n\times B_n} J(x-y)\,(v_n(y)-u_n(x))\,\varphi_n(x)\,dy\,dx,
\end{equation}
for every $\varphi_n\in C(\overline{\Omega})$ satisfying 
$\int_{A_n}\varphi_n(x)\,dx=0$.  
Rewriting \eqref{hlocalint} with the characteristic functions, we obtain
\begin{align}\label{hlocalintt}
\int_{\Omega} \chi_{A_n}(x)\,f_n(x)\,\varphi_n(x)\,dx
&= -\int_{A_n}\nabla u_n\cdot\nabla\varphi_n\,dx
   \\ & \qquad +\!\iint_{\Omega\times\Omega}
      J(x-y)\,\chi_{A_n}(x)\chi_{B_n}(y)
      (v_n(y)-u_n(x))\varphi_n(x)\,dy\,dx. \nonumber
\end{align}
The key point in this case is the choice of a family of test functions adapted to the perforated geometry. 
At this point is where the structure of the holes is crucial. Let $\varphi\in C(\overline{\Omega})$ be any given function. 
For each $n\in \mathbb{N}$, we define $\varphi_n:\Omega\to\mathbb{R}$ by
\begin{equation}\label{especialtestfunction}
\varphi_n(x)
=
\begin{cases}
\varphi(x), 
& x\in B_n,\\[0.8ex]
\displaystyle\fint_{T_n(j)} \varphi(y)\,dy, 
& x\in T_n(j)\subset A_n,
\end{cases}
\end{equation}
where $(T_n(j))_{j\le N_n}$, $ j\in\{ 1,\ldots, N_n\}$, denotes the collection of holes forming $A_n$. The functions $\varphi_n$ have the following useful properties: the first one is the uniform convergence, $\varphi_n\to\varphi$, that we show below.

Since $\varphi$ is continuous on the compact set $\overline{\Omega}$, it is uniformly continuous. 
Hence, given $\varepsilon>0$, there exists $\delta>0$ such that 
$|\varphi(x)-\varphi(y)|<\varepsilon$ whenever $|x-y|<\delta$. 
By the periodic construction, the diameter of each hole $T_n(j)$ satisfies $\mathrm{diam}(T_n(j))\to 0$ as $n\to\infty$; hence, for $n$ large enough,
\[
\mathrm{diam}(T_n(j)) < \delta
\qquad\text{for every }j.
\]
Fix $n$ sufficiently large, and let $x\in T_n(j)$. Then
\[
\Bigl|\varphi_n(x) - \varphi(x)\Bigr|
= \biggl|
   \fint_{T_n(j)}\varphi(y)\,dy - \varphi(x)
  \biggr|
\le \fint_{T_n(j)} |\varphi(y)-\varphi(x)|\,dy
< \varepsilon,
\]
since $|y-x|<\delta$ for all $y\in T_n(j)$. 

On the other hand, for $x\in B_n$ we have $\varphi_n(x)=\varphi(x)$.

Therefore, we conclude that $$\varphi_n\to\varphi$$ uniformly in $\overline{\Omega}$.

By construction, another property of $\varphi_n$ is to be constant on each hole $T_n(j)$, so
\[
\nabla\varphi_n(x) = 0 \quad\text{for all }x\in A_n.
\]
Hence, for every $n$,
\begin{equation}\label{grad-term-zero}
\int_{A_n} \nabla u_n(x)\cdot\nabla\varphi_n(x)\,dx = 0.
\end{equation}
Substituting \eqref{grad-term-zero} into \eqref{hlocalintt}, we obtain
\begin{align}\label{hlocalint-simplified}
\int_{\Omega} \chi_{A_n}(x) f_n(x)\,\varphi_n(x)\,dx
= \iint_{\Omega\times\Omega}
   J(x-y)\,\chi_{A_n}(x)\chi_{B_n}(y)\,(v_n(y)-u_n(x))\,\varphi_n(x)\,dy\,dx.
\end{align}
We now pass to the limit as $n\to\infty$ in each term of \eqref{hlocalint-simplified}.  
Since $\varphi_n\to\varphi$ uniformly in $\overline{\Omega}$ and  
$\chi_{A_n}\xrightharpoonup{*} X$ in $L^\infty(\Omega)$, we obtain
\[
\chi_{A_n}\varphi_n \longrightarrow X\varphi 
\qquad \text{strongly in } L^2(\Omega).
\]
Together with the strong convergence $f_n\to f$ in $L^2(\Omega)$, this yields
\begin{equation}\label{convf}
\int_\Omega \chi_{A_n} f_n \varphi_n\,dx 
\longrightarrow 
\int_\Omega X f(x)\varphi(x)\,dx, \quad \text{as} \ n\to \infty.
\end{equation}
For the nonlocal term, we rewrite it as
\begin{align}\label{hlocalJ}
\iint_{\Omega\times \Omega} J(x-y)\chi_{A_n}(x)\chi_{B_n}(y)(v_n(y)-u_n(x))\varphi_n(x)\,dy\,dx
&=\int_\Omega \chi_{A_n}\varphi_n \, T(\chi_{B_n}v_n)\,dx -\int_\Omega \chi_{B_n}\, T(\chi_{A_n}u_n\varphi_n)\,dy, \notag
\end{align}
where $T_J(f)(x)=\int_\Omega J(x-y)f(y)\,dy$.  
Since $J\in L^2(\Omega)$, the operator $T:L^2(\Omega)\to L^2(\Omega)$ is compact.  
From the convergences
\[
\chi_{B_n}v_n \rightharpoonup v, 
\qquad 
\chi_{A_n}u_n\varphi_n \to u\varphi
\quad\text{in }L^2(\Omega),
\]
we obtain
\[
T(\chi_{B_n}v_n)\to T(v), 
\qquad 
T(\chi_{A_n}u_n\varphi_n)\to T(u\varphi)
\quad \text{strongly in }L^2(\Omega).
\]
Since $\chi_{A_n}\varphi_n\to X\varphi$ in $L^1(\Omega)$ and  
$\chi_{B_n}\xrightharpoonup{*} (1-X)$ in $L^\infty(\Omega)$,
\begin{align}\label{convJv1}
\int_\Omega \chi_{A_n}\varphi_n\,T(\chi_{B_n}v_n)\,dx 
&\longrightarrow 
\int_\Omega X\varphi(x)\,T(v)(x)\,dx, \\
\label{convJ2}
\int_\Omega \chi_{B_n}\,T(\chi_{A_n}u_n\varphi_n)\,dy
&\longrightarrow 
\int_\Omega (1-X)(y)\,T(u\varphi)(y)\,dy. 
\end{align}
Combining \eqref{convf}, \eqref{convJv1}, \eqref{convJ2}, we obtain the limit equation corresponding to \eqref{hlocal}:
\[
\int_\Omega Xf(x)\varphi(x)\,dx
=
\int_\Omega X\varphi(x)\!\int_\Omega J(x-y)v(y)\,dy\,dx
-
\int_\Omega (1-X)\varphi(x)\!\int_\Omega J(x-y)u(x)\,dy\,dx.
\]
By the density of $C(\overline{\Omega})$ in $L^2(\Omega)$, we have
\begin{align}\label{limhlocalweak}
\int_\Omega Xf(x)\varphi(x)\,dx
=
\int_\Omega \bigg(\int_\Omega J(x-y)\,[Xv(y)-(1-X)u(x)]\,dy\bigg)\varphi(x)\,dx,
\qquad \forall \varphi\in L^2(\Omega).
\end{align}

Now, consider the weak formulation of \eqref{hnl} with the test functions $(0,\phi)$ with  
$\int_\Omega \phi=0$ that is given by
$$\int_{B_n} f\phi(x)dx=\int_{B_n}\int_{B_n}G(x-y)(v_n(y)-v_n(x))dy\phi(x)dx+\int_{B_n}\int_{A_n}J(x-y)(u_n(y)-v_n(x))dy\phi(x)dx.$$
Rewriting as before using characteristic functions and 
passing to the limit exactly as above we get
\begin{align}\label{limhnlweak}
\int_\Omega (1-X)f(x)\phi(x)\,dx
&=
\int_\Omega \bigg(\int_\Omega G(x-y)\,[(1-X)v(y)-(1-X)v(x)]\,dy\bigg)\phi(x)\,dx \\
&\quad+
\int_\Omega \bigg(\int_\Omega J(x-y)\,[(1-X)u(y)-Xv(x)]\,dy\bigg)\phi(x)\,dx, 
\qquad \forall \phi\in L^2(\Omega),\ \int_\Omega \phi=0. \notag
\end{align}
Notice that \eqref{limhlocalweak}-\eqref{limhnlweak} matches exactly with \eqref{lmlocalinholes} for all  
$(\varphi,\phi)\in L^2(\Omega)\times L^2(\Omega)$ satisfying  
$\int_\Omega\varphi + \int_\Omega\phi = 0$, then we recover the weak formulation of 
\eqref{limhlocal}--\eqref{limhnl}.

Finally, we prove that the solution to the limit system 
\eqref{limhlocal}--\eqref{limhnl} is unique. To this end, let $(u,v), (\tilde{u},\tilde{v}) \in L^2(\Omega)\times L^2(\Omega)$ 
be two solutions of \eqref{limhlocal}--\eqref{limhnl}, and define
\[
w_A := u-\tilde{u}, 
\qquad 
w_B := v-\tilde{v}.
\]
Assume that both pairs satisfy the compatibility condition
\[
\int_A u(x)\,dx + \int_B v(x)\,dx 
= \int_A \tilde{u}(x)\,dx + \int_B \tilde{v}(x)\,dx = 0.
\]
Then, subtracting the two systems \eqref{limhlocal}--\eqref{limhnl}, 
we obtain the following relations:
\begin{align}\label{usolution1}
0
&= \int_{\Omega} 
   J(x-y)\,\big[Xw_B(y) - (1-X)w_A(x)\big]\,dy,
   \qquad x\in\Omega, \\[2mm]
\label{usolution2}
0
&= \int_{\Omega} 
   G(x-y)\,\big[(1-X)w_B(y) - (1-X)w_B(x)\big]\,dy+\int_{\Omega} 
   J(x-y)\,\big[(1-X)w_A(y) - Xw_B(x)\big]\,dy,
\end{align}
for $x\in\Omega$.

Multiplying \eqref{usolution1} by $\dfrac{w_A(x)}{X}$ and 
\eqref{usolution2} by $\dfrac{w_B(x)}{1-X}$, and integrating both 
equations over $\Omega$, we find:
\begin{align*}
0
= \iint_{\Omega\times\Omega}
   J(x-y)\,
   \bigg[w_B(y)w_A(x)
         - \frac{1-X}{X}\,w_A^2(x)\bigg]\,dy\,dx,
\end{align*}
and
\begin{align*}
0= \iint_{\Omega\times\Omega}
   G(x-y)\,
   \bigg[w_B(x)w_B(y)
         - \frac{1-X}{1-X}\,w_B^2(x)\bigg]\,dy\,dx+\iint_{\Omega\times\Omega}
   J(x-y)\,
   \bigg[w_B(x)w_A(y)
         - \frac{X}{1-X}\,w_B^2(x)\bigg]\,dy\,dx.
\end{align*}

Summing the two identities and rearranging terms yields
\begin{align*}
0
&= -\iint_{\Omega\times\Omega}
     J(x-y)\,(1-X)X\,
     \bigg[\frac{w_A(x)}{X} 
           - \frac{w_B(y)}{1-X}\bigg]^2\,dy\,dx\\
&\quad + \int_{\Omega}
     \frac{w_B(x)}{1-X}
     \int_{\Omega}
     G(x-y)\,(1-X)(1-X)\,
     \bigg[\frac{w_B(y)}{1-X} 
           - \frac{w_B(x)}{1-X}\bigg]\,dy\,dx.
\end{align*}
Since the kernel 
\[
K(x-y) := G(x-y)\,(1-X)(1-X)
\]
is symmetric, we deduce the following identity for nonlocal terms, 
\begin{align*}
0
&= -\iint_{\Omega\times\Omega}
     J(x-y)\,(1-X)X\,
     \bigg[\frac{w_A(x)}{X} 
           - \frac{w_B(y)}{1-X}\bigg]^2\,dy\,dx\\
&\quad -\frac{1}{2}\iint_{\Omega\times\Omega}
     G(x-y)\,(1-X)(1-X)\,
     \bigg[\frac{w_B(y)}{1-X} 
           - \frac{w_B(x)}{1-X}\bigg]^2\,dy\,dx.
\end{align*}
Because $0 < X < 1$ for all $z\in\Omega$, each term in the right-hand side
is negative, and equality can hold only if both square brackets vanish identically.
Hence, there exists a constant $c\in\mathbb{R}$ such that
\[
\frac{w_A(x)}{X} 
= \frac{w_B(x)}{1-X} 
= c, 
\qquad \forall x\in\Omega.
\]
Therefore,
\[
w_A(x) = Xc,
\qquad
w_B(x) = (1-X)c.
\]
Since
\[
\int_A w_A(x)\,dx + \int_B w_B(x)\,dx = 0,
\]
we obtain
\[
0
= c\left(\int_A X\,dx + \int_B (1-X)\,dx\right).
\]
As $X\in(0,1)$ for all $x\in\Omega$, the quantity inside the parentheses is strictly positive,
and thus $c=0$. Consequently,
\[
w_A \equiv 0 
\quad\text{and}\quad 
w_B \equiv 0,
\]
which implies that $u=\tilde{u}$ and $v=\tilde{v}$. 
Hence, the system \eqref{limhlocal}--\eqref{limhnl} admits a unique solution.
\end{proof}

In what follows, we establish the existence of a corrector as stated in Theorem \ref{correctorlocalinballs}. Roughly speaking, by adding a suitable extra term (the corrector) to the sequence $(u_n, v_n)$, we recover strong convergence in $L^2(A) \times L^2(B)$. This follows the spirit of the nonlocal corrector approach introduced in \cite[Theorem 2.2]{monia}. 

To handle the case where the local term is defined inside the holes $A_n = T_n$, we introduce a corrector adapted to this situation. As expected, this additional term disappears in the limit, since we assume that $u\in C(\overline{\Omega})$. Recall that the approximation we consider is defined by 
\[
(w_{1,n},w_{2,n})=\left(w_{1,n},\,\frac{\chi_{B_n}v}{1-X}\right),
\]
where $w_{1,n}$ is defined in \eqref{w1n}. Here $(u,v)\in \mathcal{W}$ denotes the solution of the homogenized system \eqref{limhlocal}--\eqref{limhnl}. From the same 
computations used for the proof of \eqref{especialtestfunction}, we deduce that
\begin{equation}\label{remarkc}
w_{1,n} \longrightarrow \frac{u}{X} \quad \text{uniformly in } C(\overline{\Omega}), \quad \text{as } n\to\infty.\end{equation}
In particular, for $x\in A_n$ we obtain
\[w_{1,n}(x)
   = \chi_{T_n(j)}(x)\,\fint_{T_n(j)} \frac{u(z)}{X}\,dz 
   \;\longrightarrow\; \frac{u(x)}{X}, \quad \text{as } n\to\infty.\]
However, $(w_{1,n},w_{2,n})$ does not belong to $W_n$. To fix this, we take the adjusted vector
\[w_n:=\Bigg(w_{1,n} - \displaystyle\frac{1}{2}\frac{m_n}{|A_n|}, \;\; w_{2,n} - \displaystyle\frac{1}{2}\frac{m_n}{|B_n|}\Bigg) \in W_n,\]
with 
\[
m_n = \int_{A_n} w_{1,n}(x)\,dx + \int_{B_n} w_{2,n}(x)\,dx.
\]
Now, we are ready to proceed with the proof of Theorem \ref{correctorlocalinballs}.

\begin{proof}[Proof of Theorem \ref{correctorlocalinballs}] Consider the bilinear form
\begin{align}\label{bilinearn}
    a_n((u_n,v_n),(\varphi,\phi))&=\int_{A_n} \nabla u_n(x)\nabla \varphi(x)dx+\iint_{A_n\times B_n} J(x-y)(v_n(y)-u_n(x))(\phi(y)-\varphi(x))dydx \notag\\+&\frac{1}{2}\iint_{B_n\times B_n} G(x-y)(v_n(y)-v_n(x))(\phi(y)-\phi(x))dydx
\end{align}
already introduced in \eqref{bilinearform}, which is coercive by Lemma \eqref{coerc}. Since $\bigg(w_{1,n}-\frac{1}{2}\frac{m_n}{|A_n|},w_{2,n}-\frac{1}{2}\frac{m_n}{|B_n|}\bigg)\in W_n,$ we can insert this pair in $a_n$ and from coercivity it follows that
\begin{align}
&C_c\bigg(\bigg\|u_n-\bigg(w_{1,n}-\frac{1}{2}\frac{m_n}{|A_n|}\bigg)\bigg\|_{L^2(A_n)}^2+ \bigg\|v_n-\bigg(
    w_{2,n}-\frac{1}{2}\frac{m_n}{|B_n|}\bigg)\bigg\|^2_{L^2(B_n)}\bigg)\notag\\
    &\leq a_n\bigg(\bigg(u_n-\bigg(w_{1,n}-\frac{m_n}{2|A_n|} \bigg),v_n-\bigg(w_{2,n}-\frac{m_n}{2|B_n|}\bigg)\bigg),\bigg(u_n-\bigg(w_{1,n}-\frac{m_n}{2|A_n|} \bigg),v_n-\bigg(w_{2,n}-\frac{m_n}{2|B_n|}\bigg)\bigg)\notag\\
    &=a_n((u_n,v_n),(u_n,v_n))-2a_n\bigg((u_n,v_n),\bigg(w_{1,n}-\frac{m_n}{2|A_n|}, w_{2,n}-\frac{m_n}{2|B_n|}\bigg)\bigg)\label{line1}\\&+a_n\bigg(\bigg(\frac{m_n}{2|A_n|},\frac{m_n}{2|B_n|}\bigg),\bigg(\frac{m_n}{2|A_n|},\frac{m_n}{2|B_n|}\bigg)\bigg)- 2 a_n ((w_{1,n}, w_{2,n}) , \bigg(\frac{m_n}{2|A_n|},\frac{m_n}{2|B_n|}\bigg) \bigg)\label{line2}\\
    &+a_n((w_{1,n},w_{2,n}),(w_{1,n},w_{2,n}))\label{line3}.
\end{align}
We now compute the limit as $n \to \infty$ in these terms, in order to show that the corrector for the Neumann problem converges to the solution of 
\eqref{limhlocal}--\eqref{limhnl} in $L^2(\Omega)\times L^2(\Omega)$. Notice that, due to the way the corrector in \eqref{bilinearn} is constructed, the expression in \eqref{line2} can be reformulated as follows:
\begin{align*}&a_n\bigg(\bigg(\frac{m_n}{2|A_n|},\frac{m_n}{2|B_n|}\bigg),\bigg(\frac{m_n}{2|A_n|},\frac{m_n}{2|B_n|}\bigg)\bigg)-2a_n \bigg((w_{1,n}, w_{2,n}) , \bigg(\frac{m_n}{2|A_n|},\frac{m_n}{2|B_n|}\bigg)\bigg)\\&=\displaystyle\iint_{A_n\times B_n}J(x-y)\bigg(\frac{m_n}{2|A_n|}
 -\frac{m_n}{2|B_n|}\bigg)^2dydx \\ & \quad -2\iint_{A_n\times B_n} J(x-y)(w_{2,n}(y)-w_{1,n}(x))\bigg(\frac{m_n}{2|A_n|}-\frac{m_n}{2|B_n|}\bigg)dydx.
 \end{align*}
Replacing the integrals for $\Omega$, we get
\begin{align*}
&\iint_{\Omega\times \Omega}J(x-y)\chi_{A_n}(x)\chi_{B_n}(y)dydx\bigg(\frac{m_n}{2|A_n|}-\frac{m_n}{2|B_n|}\bigg)^2\\
&-\iint_{\Omega\times \Omega} J(x-y)\chi_{A_n}(x)\chi_{B_n}(y)(w_{2,n}(y)-w_{1,n}(x))dydx\bigg(\frac{m_n}{2|A_n|}-\frac{m_n}{2|B_n|}\bigg)\longrightarrow 
0,\qquad \text{as} \quad n\to \infty. \end{align*}
Since
\begin{align}\label{mn0}
&\frac{m_n}{2|A_n|}-\frac{m_n}{2|B_n|}=m_n\bigg(\frac{1}{2|A_n|}-\frac{1}{2|B_n|}\bigg)=\bigg(\int_{A_n}w_{1,n}dx+\int_{B_n}w_{2,n}dx\bigg)\bigg(\frac{1}{2|A_n|}-\frac{1}{2|B_n|}\bigg)\notag\\
&=\bigg(\int_{\Omega}\chi_{A_n}(x)udx+\int_{\Omega}\frac{\chi_{B_n}v}{1-X}(x)dx\bigg)\Bigg(\frac{1}{2\displaystyle\int_{\Omega}\chi_{A_n}(x)dx}-\frac{1}{2\displaystyle\int_{\Omega}\chi_{B_n}(x)dx}\Bigg)\notag\\
&\longrightarrow \bigg(\int_{\Omega}udx+\int_{\Omega}v(x)dx\bigg)\underbrace{\Bigg(\frac{1}{2\displaystyle\int_{\Omega}Xdx}-\frac{1}{2\displaystyle\int_{\Omega}(1-X)(x)dx}\Bigg)}_{bounded}=0, \qquad \text{as} \quad n\to \infty,
\end{align}
then the assumption $0=\displaystyle\int_{A_n}u_n (x) dx+\int_{B_n}v_n (x) dx$, allows us to get
\begin{equation}
\int_{A_n}u_n(x) dx+\int_{B_n}v_n (x) dx\longrightarrow \int_{\Omega}u (x) dx+\int_{\Omega}v (x) dx=0, \qquad \text{as} \quad n\to \infty.\end{equation}
The computations needed for \eqref{line2} follow from the equality \eqref{solution} and the fact that $m_n$ satisfies \eqref{mn0}, as follows:
\begin{align}
&a_n((u_n,v_n),(u_n,v_n))-2a_n\bigg((u_n,v_n),\bigg(w_{1,n}-\frac{m_n}{2|A_n|}, w_{2,n}-\frac{m_n}{2|B_n|}\bigg)\bigg)\notag
    \\
&=-(f_n,u_n)_{L^2(A_n)}-(f_n,v_n)_{L^2(B_n)}+2\bigg(f_n,w_{1,n}-\frac{m_n}{2|A_n|}\bigg)_{L^2(A_n)}+2\bigg(f_n,w_{2,n}-\frac{m_n}{2|B_n|}\bigg)_{L^2(B_n)}\notag\\
&=-\displaystyle\int_{\Omega}\chi_{A_n}u_nf_ndx-\int_{\Omega}\chi_{B_n}v_nf_ndx+2\int_{\Omega}\chi_{A_n}f_nudx+2\int_{\Omega}f_n\frac{\chi_{B_n}v}{1-X}dx-2\int_{\Omega}\chi_{A_n}f_n\frac{m_{n}}{2|A_n|}dx \notag \\ & \quad -2\int_{\Omega}\chi_{B_n}f_n\frac{m_{n}}{2|B_n|}dx \notag\\
&\longrightarrow -\int_{\Omega}ufdx-\int_{\Omega}vfdx+2\int_{\Omega}fudx+2\int_{\Omega}fvdx=\int_\Omega fudx+\int_{\Omega}fvdx, \qquad \text{as} \quad n\to \infty.\label{liml1}
\end{align}  
Now, we are able to compute the limit of \eqref{line3}, in which we must find the term $$-\int_\Omega fudx-\int_{\Omega}fvdx.$$
From \eqref{bilinearn}, that we rewrite it as follows, 
\begin{align*}
    a_n((u_n,v_n),(\varphi,\phi))&=\int_{A_n} \nabla u_n(x)\nabla \varphi(x)dx+\iint_{A_n\times B_n} J(x-y)(v_n(y)-u_n(x))(\phi(y)-\varphi(x))dydx \notag\\ & \qquad -\iint_{B_n\times B_n} G(x-y)(v_n(y)-v_n(x))\phi(x)dydx.
\end{align*}
Here we used the symmetry of $G$. 

Now, we observe that 
$$\nabla\bigg(\chi_{T_n(j)}(x) \fint_{T_n(j)} \frac{u(z)}{X(z)}dz\bigg)=0, \qquad \forall x\in \Omega$$
by the construction of the function $w_{1,n}$. 

Then, we compute the other terms,
\begin{align*}
a_n\bigg((w_{1,n},w_{2,n}),(w_{1,n},w_{2,n})\bigg)=&
\iint_{A_n\times B_n} J(x-y)(w_{2,n}(y)-w_{1,n}(x))(w_{2,n}(y)-w_{1,n}(x))dydx \notag\\-&\iint_{B_n\times B_n} G(x-y)(w_{2,n}(y)-w_{2,n}(x))w_{2,n}(x)dydx,
\end{align*}
where we can rewrite this as follows,
\begin{align}
 a_n \bigg((w_{1,n},w_{2,n})&,(w_{1,n},w_{2,n})\bigg)=\iint_{A_n\times B_n}J(x-y)\bigg(\frac{\chi_{B_n}v(y)}{1-X}-\chi_{T_n(j)}(x) \fint_{T_n(j)} \frac{u(z)}{X(z)}dz\bigg)\frac{\chi_{B_n}v(y)}{1-X}dydx\label{l1}\\
 &-\iint_{A_n\times B_n}J(x-y)\bigg(\frac{\chi_{B_n}v(y)}{1-X}-\chi_{T_n(j)}(x) \fint_{T_n(j)} \frac{u(z)}{X(z)}dz\bigg)\chi_{T_n(j)}(x) \fint_{T_n(j)} \frac{u(z)}{X(z)}dzdydx\label{l2}\\
 &-\iint_{B_n\times B_n} G(x-y)\bigg(\frac{\chi_{B_n}v(y)}{1-X}-\frac{\chi_{B_n}v(x)}{1-X}\bigg)\frac{\chi_{B_n}v(x)}{1-X}dydx. \label{l3}
\end{align}

We treat each term separately. For \eqref{l1}, we rewrite the integral using characteristic functions and the identity
$\chi_{B_n}^2=\chi_{B_{n}}$ and we obtain,
\begin{align}
&\iint_{A_n\times B_n}J(x-y)\bigg(\frac{\chi_{B_n}v(y)}{1-X}-\chi_{T_n(j)}(x) \fint_{T_n(j)} \frac{u(z)}{X(z)}dz\bigg)\frac{\chi_{B_n}v(y)}{1-X}dydx=\notag\\
&\int_\Omega\frac{\chi_{B_n}v(y)}{(1-X)^2(y)}\int_\Omega J(x-y)\chi_{A_n}(x)v(y)dydx\label{1l1}\\
&-\int_{\Omega}\chi_{T_n(j)}(x) \fint_{T_n(j)} \frac{u(z)}{X(z)}dz\int_{\Omega}J(x-y)\chi_{A_n}(x)\frac{\chi_{B_n}v(y)}{1-X}dydx.\label{2l1}
\end{align}
For the first term \eqref{1l1}, since $\chi_{A_n}\xrightharpoonup{*} X$ in $L^\infty(\Omega)$ and 
$y\mapsto J(x-y)$ is continuous, we have
\[
\int_\Omega J(x-y)\chi_{A_n}(x)\,dx
    \longrightarrow \int_\Omega J(x-y)X(x)\,dx
    \qquad\text{in }L^\infty(\Omega).
\]
From $\chi_{B_n}\xrightharpoonup{*}1-X$ and the fact that
\[
f(y)=\frac{v^2(y)}{(1-X)^2(y)}\int_\Omega J(x-y)X(x)\,dx \in L^1(\Omega),
\]
we obtain
\begin{equation}\label{1line1}
\int_\Omega\frac{\chi_{B_n}v(y)}{(1-X)^2(y)}
     \left(\int_\Omega J(x-y)\chi_{A_n}(x)\,dx\right)v(y)\,dy
    \longrightarrow
    \int_\Omega\frac{v(y)}{1-X(y)}
        \left(\int_\Omega J(x-y)X(x)\,dx\right)v(y)\,dy.
\end{equation}

For the second term \eqref{2l1}, the uniform convergence \eqref{remarkc} yields
\[
\chi_{T_n(j)}(x)\fint_{T_n(j)}\frac{u}{X}\,dz \longrightarrow \frac{u(x)}{X(x)}
    \qquad\text{uniformly},
\]
and, since
\[
\int_\Omega J(x-y)\frac{\chi_{B_n}v(y)}{1-X(y)}\,dy
     \longrightarrow
     \int_\Omega J(x-y)v(y)\,dy \qquad\text{in }L^2(\Omega),
\]
we conclude
\begin{equation}\label{2line1}
\begin{array}{l}
\displaystyle 
-\!\int_{\Omega}\chi_{A_n}(x)
  \left(\chi_{T_n(j)}(x)\fint_{T_n(j)}\frac{u}{X}\,dz\right)
  \left(\int_\Omega J(x-y)\frac{\chi_{B_n}v(y)}{1-X(y)}\,dy\right)dx \\[8pt]
  \displaystyle \qquad 
\longrightarrow
-\!\int_\Omega\frac{u(x)}{X(x)}
     \left(\int_\Omega J(x-y)X(y)v(y)\,dy\right)dx.
     \end{array}
\end{equation}

We now turn out attention  to \eqref{l2}. Again writing integrals using characteristic functions gives
\begin{align}
&-\iint_{A_n\times B_n}J(x-y)\bigg(\frac{\chi_{B_n}v(y)}{1-X}-\chi_{T_n(j)}(x) \fint_{T_n(j)} \frac{u(z)}{X(z)}dz\bigg)\bigg(\chi_{T_n(j)}(x) \fint_{T_n(j)} \frac{u(z)}{X(z)}dz\bigg)dydx=\notag\\
&\int_\Omega\bigg(\chi_{T_n(j)}(x) \fint_{T_n(j)} \frac{u(z)}{X(z)}dz\bigg)\int_\Omega J(x-y)\chi_{B_n}(y)dy\chi_{A_n}(x)\bigg(\chi_{T_n(j)}(x) \fint_{T_n(j)} \frac{u(z)}{X(z)}dz\bigg)dx\label{1l2}
\\
&-\int_\Omega\chi_{B_n}(y)\frac{v}{1-X}(y)\int_\Omega J(x-y)\chi_{A_n}(x)\bigg(\chi_{T_n(j)}(x) \fint_{T_n(j)} \frac{u(z)}{X(z)}dz\bigg)dxdy.\label{2l2}
\end{align}
In the first term \eqref{1l2}, we pass to the limit as follows,
\begin{align}\label{lim1l2}\lim_{n\to \infty}&\int_\Omega\bigg(\chi_{T_n(j)}(x) \fint_{T_n(j)} \frac{u(z)}{X(z)}dz\bigg)\int_\Omega J(x-y)\chi_{B_n}(y)dy\chi_{A_n}(x)\bigg(\chi_{T_n(j)}(x) \fint_{T_n(j)} \frac{u(z)}{X(z)}dz\bigg)dx=\notag\\
&\int_{\Omega} \frac{u}{X}(x)\int_{\Omega}J(x-y)(1-X)(y)dyu(x)dx,
\end{align}
Indeed, since  $\chi_{B_n}\xrightharpoonup{*}1-X $  
and the function $x\mapsto J(x-y)$ is continuous, we have the following limit,
$$\int_\Omega J(x-y)\chi_{B_n}(y)dy\longrightarrow \int_{\Omega} J(x-y)(1-X)dy\in L^\infty(\Omega),\qquad \text{as} \quad n\to \infty.$$
Since
$w_{n,1}\to \displaystyle\frac{u}{X}$ uniformly by \eqref{remarkc} and $\chi_{A_n}\xrightharpoonup{*} X$, then the product also converges  $\chi_{A_n}(w_{1,n})^2\to \displaystyle\frac{u^2}{X} \in L^2(\Omega)$, and we obtain the limit given by \eqref{lim1l2}. For \eqref{2l2}, we compute the limit, as $n\to \infty$, and we obtain
\begin{align}
& \lim_{n\to \infty}-\int_\Omega\chi_{B_n}(y)\frac{v}{1-X}(y)\int_\Omega J(x-y)\chi_{A_n}(x)\bigg(\chi_{T_n(j)}(x) \fint_{T_n(j)} \frac{u(z)}{X(z)}dz\bigg)dxdy\label{liml2}\\
&=-\int_\Omega\frac{v}{1-X}(y)\int_\Omega J(x-y)[(1-Xu(x)]dydx.\notag
\end{align}
Since $\chi_{A_n}w_{n,1}\rightharpoonup u\in L^2(\Omega)$ and $y\mapsto J(x-y)$ is a continuous function, then
$$\int_\Omega J(x-y)\chi_{A_n}(x)\bigg(\chi_{T_n(j)}(x) \fint_{T_n(j)} \frac{u(z)}{X(z)}dz\bigg)dx\longrightarrow \int_\Omega J(x-y)u(x)dx,\qquad \text{as} \quad n\to \infty.$$
The limit \eqref{liml2} follows from the fact that $\chi_{B_n}\xrightharpoonup{*}1-X$. To verify the last line \eqref{l3}, we first find the following expression,
\begin{align*}
    &-\iint_{B_n\times B_n} G(x-y)\bigg(\frac{\chi_{B_n}v(y)}{1-X}-\frac{\chi_{B_n}v(x)}{1-X}\bigg)\frac{\chi_{B_n}v(x)}{1-X}dydx\\
    &=\int_\Omega\frac{v(x)}{1-X}\int_\Omega G(x-y)\chi_{B_n}(y)\frac{\chi_{B_n}v(x)}{1-X}dydx-\int_{\Omega}\frac{v(x)}{1-X}\int_{\Omega} G(x-y)\chi_{B_n}(x)\frac{\chi_{B_n}v(y)}{1-X}dydx.
\end{align*}
Since $\chi_{B_n}\xrightharpoonup{*}1-X \in L^{\infty}(\Omega)$ and repeating the same weak convergence arguments invoking the continuity of $G$, we compute the following limit
\begin{align}\label{1l3}
   & \lim_{n\to \infty}\int_\Omega\frac{v(x)}{1-X}\int_\Omega G(x-y)\chi_{B_n}(y)\frac{\chi_{B_n}v(x)}{1-X}dydx-\int_{\Omega}\frac{v(x)}{1-(X)}\int_{\Omega} G(x-y)\chi_{B_n}(x)\frac{\chi_{B_n}v(y)}{1-X}dydx\\
    &=\int_\Omega\frac{v(x)}{1-X}\int_\Omega G(x-y)(1-X)v(x)dydx-\int_{\Omega}\frac{v(x)}{1-X}\int_{\Omega} G(x-y)(1-X)v(y)dydx\notag\\
    &=\int_\Omega \frac{v(x)}{1-X}\int_\Omega G(x-y)[(1-X)v(x)-(1-X)v(y)]dydx.\notag
\end{align}
At this stage, gathering the previous limits of \eqref{1l1}, \eqref{2l1},\eqref{1l2},\eqref{2l2} and \eqref{1l3} we conclude that 
\begin{align*}
&\lim_{n\to \infty}a_n((w_{1,n},w_{2,n}),(w_{1,n},w_{2,n}))=\int_{\Omega} \frac{u}{X}(x)\int_{\Omega}J(x-y)((1-X)(y)dyu(x)dx-Xv(y))dydx\\&\qquad +\int_{\Omega} \frac{v(y)}{1-X}\int_{\Omega} J(x-y)(Xv(y)-(1-X)u(x)]dydx\\& \qquad +\int_\Omega \frac{v(x)}{1-X}\int_\Omega G(x-y)[(1-X)v(x)-(1-X)v(y)]dydx,
\end{align*}
which gives us the formulation of \eqref{limhlocal}--\eqref{limhnl} if we multiply by $\frac{u}{X}$ and $\frac{v}{1-X}$ and integrate both equations inside $\Omega$. Then, we conclude that 
\begin{align*}
    \lim_{n\to \infty}a_n((w_{1,n},w_{2,n}),(w_{1,n},w_{2,n}))=-\int_{\Omega} ufdx -\int_{\Omega}vfdx,
\end{align*}
which proves the theorem.
\end{proof}

\section{NonLocal in Holes} \setcounter{equation}{0}
We now move to the complementary configuration.   
Precisely, we describe $A_n$, $B_n$ as  
\begin{equation}\label{nlinball}
    B_n = \bigcup_{j=1}^{N_n} T_n(j),
    \qquad 
    A_n = \Omega \setminus B_n.
\end{equation}
To analyze the limit as $n \to \infty$ of the sequence of solutions $(u_n, v_n)\in W_n$ to the system \eqref{hlocal}--\eqref{hnl}, we must take into account the new geometric configuration of the perforated domain. In particular, the Neumann boundary condition is imposed both on the internal boundary of the holes, $\partial_{\mathrm{int}}\Omega$, and on the external boundary $\partial_{\mathrm{ext}}\Omega$.

Our approach is based on Theorem~\ref{solution}, and the main convergence result will be stated in Theorem~\ref{thrmnlinballs}. Before studying the asymptotic behavior, we recall some auxiliary tools that will be used in the analysis, for the proofs we refer to \cite{doina, luctartar}.

The homogenized problem \eqref{nlh}--\eqref{lh} associated with \eqref{hlocal}--\eqref{hnl} is formulated in terms of a coercive bilinear form on 
\[
\mathcal{W} := \bigg\{ (u,v) \in  H^1(\Omega) \times L^2(\Omega) \;\big|\; \int_{\Omega} Xu(x)\,dx + \int_{\Omega} v(x)\,dx = 0 \bigg\},
\]
that is explicitly given by
\begin{align}\label{wfnlh}
a_\infty((u,v),(\varphi,\phi)) 
&= \int_\Omega Q_{\rm Hom} \nabla u \cdot \nabla \varphi(x) \, dx + \iint_{\Omega \times \Omega} J(x-y) \, X\bigl(v(y) - (1-X) u(x) \bigr) \bigl(\phi(y) - \varphi(x) \bigl) \, dy \, dx \notag \\
&\quad + \frac12\iint_{\Omega \times \Omega} G(x-y) \bigl((1-X)v(y) -  (1-X)v(x) \bigr)(\phi(y)-\phi(x)) \, dy \, dx, 
\end{align}
where $Q_{\rm Hom}$ is the following positive definite matrix 
\begin{equation}\label{qij}
    Q_{\rm Hom}:=\frac{1}{|Y|}\int_{Y^*}I-\bigg[\frac{\partial U^i}{\partial y_j}\bigg]_{i,j=1}^Ndx
\end{equation}
with
\begin{equation}
\bigg[\frac{\partial U^i}{\partial y_j}\bigg]_{j,i=1}^N:=\begin{pmatrix}
\frac{\partial U^1}{\partial y_1} & \frac{\partial U^2}{\partial y_1} & \cdots & \frac{\partial U^N}{\partial y_1} \\
\frac{\partial U^1}{\partial y_2} & \frac{\partial U^2}{\partial y_2} & \cdots & \frac{\partial U^N}{\partial y_2} \\
\vdots & \vdots & \ddots & \vdots \\
\frac{\partial U^1}{\partial y_N} & \frac{\partial U^2}{\partial y_N} & \cdots & \frac{\partial U^N}{\partial y_N}
\end{pmatrix}.
\end{equation}
The auxiliar function $U^i(x):Y^*\to \mathbb{R}$, $i=1,\ldots N$, is the unique solution of the harmonic problem
\begin{equation}\label{Ui}
\begin{cases}
    -\Delta U^i(x)=0 \ \text{in}\ Y^*\\
    \nabla U^i\cdot \eta =\eta_i(x) \ \text{in}\ \partial_{int}Y^*\\
    U^i(x) \ \ \ \  Y\text{-periodic}\
\end{cases}\end{equation}
in the representative cell $Y^*$ introduced in \eqref{cell}. 

Then $a_\infty$ define a coercive bilinear form such that solutions $(u,v)$ to the homenized problem satisfy
\begin{equation}\label{lmnlinholes}
a_\infty((u,v),(\varphi,\phi))=-\int_{\Omega} fX\varphi(x)dx-\int_\Omega f(1-X)\phi(x)dx.\end{equation}

\begin{remark} \label{Q_hom} {\rm For the sake of completeness, we show that the matrix $Q_{\rm Hom}$ is positive definite. 
Recall that the coefficients of $Q_{\rm Hom}$ are given by
\begin{equation}\label{Q}
Q_{\rm Hom}:=\frac{1}{|Y|}\int_{Y^*}I-\bigg[\frac{\partial U^i}{\partial y_j}\bigg]_{i,j=1}^Ndx,\end{equation}
where $U^i$ is given by the equation \eqref{Ui}, which variational formulation correspond is, to find $U^i$ $Y-$ periodic and
\begin{equation}\label{bilinearU}\int_{Y^*} \nabla U^i \cdot \nabla \varphi \, dy = \int_{\partial Y^*} \eta_i \varphi \, dS \quad \text{for all } \varphi\in H^1_{per}(Y^*).\end{equation}
The goal is to show that for any nonzero vector $\boldsymbol{\xi} = (\xi_1, \ldots, \xi_N) \in \mathbb{R}^N \setminus \{\mathbf{0}\}$,
\[
\sum_{i,j=1}^N q_{ij} \xi_i \xi_j > 0,
\]
where we are denoting $q_{ij}$ the coefficients of the matrix $Q_{\rm Hom}$,
which are 
\begin{equation}\label{coefficient}
q_{ij}:=\frac{1}{|Y|}\int_{Y^*}\delta_{ij}-\frac{\partial U^i}{\partial y_j}dx.\end{equation}
For a given vector $\boldsymbol{\xi}$, define the function
$$w = \sum_{i=1}^N \xi_i (y_i - U^i),$$
whose gradient is given by
$$\nabla w = \sum_{i=1}^N \xi_i (\mathbf{e}_i - \nabla U^i).$$
Then, we can compute the energy of $w$,
\begin{align}
\int_{Y^*} |\nabla w|^2 dy &= \int_{Y^*} \left( \sum_{i=1}^N \xi_i (\mathbf{e}_i - \nabla U^i) \right) \cdot \left( \sum_{j=1}^N \xi_j (\mathbf{e}_j - \nabla U^j) \right) dy \notag\\
&= \sum_{i,j} \xi_i \xi_j \int_{Y^*} (\mathbf{e}_i - \nabla U^i) \cdot (\mathbf{e}_j - \nabla U^j) \, dy. \label{eq:energy_expansion}
\end{align}
We have
\[
(\mathbf{e}_i - \nabla U^i) \cdot (\mathbf{e}_j - \nabla U^j) = \delta_{ij} - \frac{\partial U^j}{\partial y_i} - \frac{\partial U^i}{\partial y_j} + \nabla U^i \cdot \nabla U^j.
\]
Going back into \eqref{eq:energy_expansion}, we get
\begin{equation}
\int_{Y^*} |\nabla w|^2 dy = \sum_{i,j} \xi_i \xi_j \int_{Y^*} \left( \delta_{ij} - \frac{\partial U^j}{\partial y_i} - \frac{\partial U^i}{\partial y_j} + \nabla U^i \cdot \nabla U^j \right) dy. \label{eq:energy_full}
\end{equation}
We now simplify the terms in \eqref{eq:energy_full} using the weak form of the equations and the divergence theorem. First, the weak form \eqref{bilinearU} with the test function $\varphi = U^j$ gives us
\begin{align*}
    0=-\int_{Y^*}U^j(\Delta U^i)&=\int_{Y^*}\nabla U^i(y)\nabla U^j(y)dy-\int_{\partial Y^*}U^j(y)\underbrace{\frac{\partial U^i}{\partial \eta}}_{=\eta_i}(y)dS(y)\\
    &=\int_{Y^*}\nabla U^i(y)
    \nabla U^j(y)dy-\int_{\partial Y^*}[U^j\mathbf{e}_i](y)\eta dS_i(y)\\
    &=\int_{Y^*}\nabla U^i(y)
    \nabla U^j(y)dy-\int_{ Y^*}div [U^j\mathbf{e}_i](y)dy\\
    &=\int_{Y^*}\nabla U^i(y)\nabla U^j(y)dy-\int_{Y^*}\frac{\partial U^j}{\partial y_i}dy,
\end{align*}
here we used the Divergence Theorem. We can conclude that
\begin{align}\label{relationint}
\int_{Y^*}\nabla U^i(y)\nabla U^j(y)dy=\int_{Y^*}\frac{\partial U^j}{\partial y_i}dy=\int_{Y^*}\frac{\partial U^i}{\partial y_j}dy.\end{align}
Now, substitute the above into the expression for the energy \eqref{eq:energy_full} to obtain
\begin{align}\label{eq:energy_simple} 
\int_{Y^*} |\nabla w|^2 dy &= \sum_{i,j} \xi_i \xi_j \bigg[ \int_{Y^*} \delta_{ij} dy - \int_{Y^*}\frac{\partial U^j}{\partial y_i}dy -\int_{Y^*}\frac{\partial U^i}{\partial y_j}dy +\int_{Y^*}\frac{\partial U^j}{\partial y_i}dy\bigg] \notag\\
&= \sum_{i,j} \xi_i \xi_j \bigg[ \delta_{ij} |Y^*| -\int_{Y^*}\frac{\partial U^i}{\partial y_j}dy \bigg]. 
\end{align}
From the definition of $q_{ij}$ in \eqref{coefficient} it follows that
\[
|Y| \sum_{i,j} q_{ij} \xi_i \xi_j = \sum_{i,j} \xi_i \xi_j \int_{Y^*} \left( \delta_{ij} - \frac{\partial U^i}{\partial y_j} \right) dy.
\]
Using \eqref{relationint} and integration by parts in \eqref{Ui}, we get
\begin{equation}\label{eq:q_identity}
|Y| \sum_{i,j} q_{ij} \xi_i \xi_j = \sum_{i,j} \xi_i \xi_j \left( \delta_{ij} |Y^*| -\underbrace{\int_{Y^*} \nabla U^i \eta_j dS}_{=0}\right). 
\end{equation}
Comparing \eqref{eq:energy_simple} and \eqref{eq:q_identity}, we obtain the key identity
\begin{equation}
\int_{Y^*} |\nabla w|^2 dy = |Y| \sum_{i,j} q_{ij} \xi_i \xi_j. \label{eq:main_identity}
\end{equation}
Now, from \eqref{eq:main_identity}, we conclude that
\[
\sum_{i,j} q_{ij} \xi_i \xi_j = \frac{1}{|Y|} \int_{Y^*} |\nabla w|^2 dy.
\]
Since $|Y| > 0$ and the energy integral is non-negative, we have
\[
\sum_{i,j} q_{ij} \xi_i \xi_j \geq 0.
\]
Now, suppose $\sum_{i,j} q_{ij} \xi_i \xi_j = 0$ for some vector $\boldsymbol{\xi}$. Then
\[
\int_{Y^*} |\nabla w|^2 dy = 0,
\]
which implies $\nabla w = 0$ almost everywhere in $Y^*$. Therefore, $w$ is constant in $Y^*$. Recall that
\[
w = \sum_{i=1}^N \xi_i (y_i - U^i).
\]
The function $y_i$ is not periodic, while $U^i$ is $Y$-periodic. For $w$ to be constant, the coefficient $\boldsymbol{\xi}$ must be zero. Hence, if $\boldsymbol{\xi} \neq \mathbf{0}$, then
\[
\sum_{i,j} q_{ij} \xi_i \xi_j > 0,
\]
which proves that the matrix $Q_{\rm Hom}$ is positive definite. }
\end{remark}

Now, we return to the proof of the homogenization result. In this case, corresponding to \eqref{nlinball}, the connectedness of the set $A_n$ plays a crucial role. 
It allows us to use a linear \emph{extension operator}
\[
P_n : H^1(A_n) \longrightarrow H^1(\Omega),
\]
whose existence follows from classical results on perforated domains (see, for instance, \cite{doina2}). 
This operator enables us to work on the fixed domain $\Omega$ while keeping uniform control of functions originally defined on $A_n$.
More precisely, there exists a constant $C>0$, depending only on $\Omega$ (independent of $n$), such that
\begin{equation}\label{extension}
\|P_n u\|_{H^1(\Omega)} \le C\,\|u\|_{H^1(A_n)},
\qquad \forall\,u\in H^1(A_n).
\end{equation}
Moreover, the extension operator preserves the function and its gradient almost everywhere in $A_n$, namely,
\[
P_n u = u \quad \text{and} \quad \nabla P_n u = \nabla u \quad \text{in } A_n.
\]

We also introduce the notation $\tilde{u}$ to denote the extension of a function $u$ in $A_n$ by zero to the whole domain $\Omega$.

Unlike in the complementary configuration, here a local second order operator remains in the limit. 
The existence of the operator $P_n$ allows us not only to express the integrals over $A_n$  as integrals over the fixed domain $\Omega$, 
but also to pass to the limit in these integrals when analyzing the associated bilinear form given by~\eqref{bilinearform}.  
The boundedness of $u_n\in L^2(A_n)$ and $v_n\in L^2(B_n)$ can be justified by the same reasoning as in Lemma~\ref{uvbounded}.  
Indeed, applying the classical Poincaré--Wirtinger inequality for periodic holes (see, for instance, \cite[Lemma 2.1]{doina}), 
we immediately obtain uniform bound on the corresponding norms, which ensures that the energy estimates stay valid in this setting.

\begin{lemma}\label{vubounded}
Let $(u_n,v_n)\in W_n$ be the unique solution of \eqref{hlocal}--\eqref{hnl}. 
If $\displaystyle\Big|\!\int_{A_n} u_n(x)\,dx\Big|$ is uniformly bounded in $n$, 
then the sequences 
$u_n,\ \partial_{x_i}u_n\in L^2(A_n)$ for $i=1,\ldots,N$, 
and $v_n\in L^2(B_n)$ are uniformly bounded.
\end{lemma}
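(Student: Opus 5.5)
The argument mirrors Lemma~\ref{uvbounded}, with the roles of $A_n$ and $B_n$ exchanged. Now $A_n$ is the connected perforated domain carrying the local operator, and $B_n$ is the union of the small holes carrying the nonlocal operator.

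\emph{Step 1: a priori energy bound.} Compare the minimizer with the trivial pair: $E_n(u_n,v_n)\le E_n(0,0)=0$. This gives
\[
\frac12\int_{A_n}|\nabla u_n|^2+\frac14\iint_{B_n\times B_n}G(v_n(y)-v_n(x))^2+\frac12\iint_{A_n\times B_n}J(v_n(y)-u_n(x))^2\le \Big|\int_{A_n}f_nu_n\Big|+\Big|\int_{B_n}f_nv_n\Big|.
\]
Since $f_n\to f$ in $L^2$, we have $\|f_n\|_{L^2(\Omega)}\le C$.

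\emph{Step 2: control of $u_n$ in $L^2(A_n)$.} Apply the uniform Poincar\'e--Wirtinger inequality on perforated domains \cite[Lemma 2.1]{doina}, which can equivalently be obtained through the extension operator $P_n$ and \eqref{extension} combined with Poincar\'e on $\Omega$. It gives
\[
\|u_n-\textstyle\fint_{A_n}u_n\|_{L^2(A_n)}\le C\|\nabla u_n\|_{L^2(A_n)}
\]
with $C$ independent of $n$. Because $|\int_{A_n}u_n|$ is bounded and $|A_n|\to X|\Omega|>0$, we get $\|u_n\|_{L^2(A_n)}\le C\|\nabla u_n\|_{L^2(A_n)}+C$.

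\emph{Step 3: control of $v_n$ in $L^2(B_n)$.} There is no uniform nonlocal Poincaré inequality inside $B_n$, since the holes are disconnected and shrink. Instead, use the transmission term. Hypothesis~\ref{hipo} gives $b_n(y)=\int_{A_n}J(x-y)\,dx\ge c_J>0$ uniformly in $n$ and $y\in B_n$, because $J\ge c$ on a neighbourhood of the origin and $A_n$ has positive density in every ball of fixed radius. From $v_n(y)^2\le 2(v_n(y)-u_n(x))^2+2u_n(x)^2$, multiplying by $J(x-y)$ and integrating over $A_n\times B_n$ yields
\[
c_J\int_{B_n}v_n^2\le 2\iint_{A_n\times B_n}J(v_n(y)-u_n(x))^2+2\|J\|_\infty|\Omega|\int_{A_n}u_n^2.
\]
The first term is bounded by the energy and the second by Step~2.

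\emph{Step 4: absorption.} Set $\mathcal E_n$ equal to the left-hand side of the Step~1 inequality. Steps 2 and 3 give $\|u_n\|^2_{L^2(A_n)}+\|v_n\|^2_{L^2(B_n)}\le C\mathcal E_n+C$. Inserting this into Step~1 and applying Young's inequality $ab\le \varepsilon a^2+C_\varepsilon b^2$ to the right-hand side gives
\[
\mathcal E_n\le \varepsilon(C\mathcal E_n+C)+C_\varepsilon.
\]
Choosing $\varepsilon$ small, we absorb and conclude $\mathcal E_n\le C$. This bounds $\|\nabla u_n\|_{L^2(A_n)}$, hence each $\partial_{x_i}u_n$, and then Steps 2 and 3 bound $u_n$ and $v_n$.

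\emph{Main obstacle.} The delicate point is the uniformity of the constants in $n$. The Poincaré constant on $A_n$ must not degenerate as the holes shrink; this is exactly the classical perforated-domain result, and I rely on it as stated. The lower bound $b_n\ge c_J$ requires that every point of $B_n$ sees a fixed proportion of $A_n$ within the support of $J$. I would verify this from the periodic structure: each cell $\frac1n(k\ell+Y)$ contains $\frac1n(k\ell+Y^*)$, and near $\partial\Omega$ the region $A_n$ contains a full boundary layer.
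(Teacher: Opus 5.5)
Your proof is correct and follows essentially the paper's route: the uniform Poincar\'e--Wirtinger inequality on the connected perforated set $A_n$ controls $u_n$, and the transmission term with $b_n\ge c_J$ controls $v_n$ (as in Lemma~\ref{uvbounded}, with the roles of $A_n$ and $B_n$ swapped). Your Step~4 absorption also supplies a step the paper glosses over when it asserts that minimality of $E_n$ alone bounds $\int_{A_n}|\nabla u_n|^2$; the one caveat is that your aside deriving Poincar\'e--Wirtinger from $P_n$ needs the gradient-only estimate $\|\nabla P_n u\|_{L^2(\Omega)}\le C\|\nabla u\|_{L^2(A_n)}$ (which the standard perforated-domain extension does satisfy), not merely the full $H^1$ bound \eqref{extension}.
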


\begin{proof}
Since $A_n$ is connected and the mean value of $u_n$ over $A_n$ is uniformly bounded, 
the classical Poincaré--Wirtinger inequality applies with a constant $C_P>0$ 
independent of $n$:
\[
\int_{A_n}\Big|u_n(x)-\fint_{A_n}u_n(x)\,dx\Big|^2dx
\le C_P \int_{A_n}|\nabla u_n(x)|^2dx.
\]
Because $(u_n,v_n)$ minimizes the energy $E_n(u_n,v_n)$, 
the right-hand side is uniformly bounded by some constant $C>0$, 
and consequently $\|u_n\|_{L^2(A_n)}$ is uniformly bounded.  
The same reasoning, applied to $v_n$ (using the structure of $E$ and the estimates 
previously obtained in Lemma~\ref{u_nbound}), 
gives the uniform bound for $\|v_n\|_{L^2(B_n)}$.
\end{proof}

By the extension estimate~\eqref{extension}, the sequence $\{P_n u_n\}$ is uniformly bounded in $H^1(\Omega)$, and hence in $L^2(\Omega)$. 
Therefore, up to a subsequence, there exist functions $u\in H^1(\Omega)$ and $v\in L^2(\Omega)$ such that
\[
P_n u_n \rightharpoonup u \quad\text{in } H^1(\Omega),
\qquad
\chi_{B_n}v_n \rightharpoonup v \quad\text{in } L^2(\Omega),
\qquad \text{as } n\to\infty.
\]
Moreover, by the Rellich--Kondrachov compactness theorem, we obtain the strong convergence
\[
P_n u_n \longrightarrow u 
\quad \text{in } L^2(\Omega).
\]
Since $\chi_{A_n}\xrightharpoonup{*} X$ in $L^{\infty}(\Omega)$, it follows that
\[
\chi_{A_n}P_n u_n
=\tilde{u}_n
\rightharpoonup X u 
\quad \text{in } L^2(\Omega),
\qquad \text{as } n\to\infty.
\]

As before, $(u,v)\in H^1
(\Omega)\times L^2(\Omega)$ denotes the unique weak solution of the homogenized problem given by the weak formulation~\eqref{lmnlinholes}. In this setting, it is important to note that the convergence $P_n u_n \rightharpoonup u$ in $H^1(\Omega)$, established earlier, is only weak. In fact, this is the optimal type of convergence one may expect for problems with periodic structures; strong convergence in $H^1$ generally fails, as we mentioned before.

To obtain a stronger type of convergence, we therefore introduce a suitable corrector procedure. The idea is to modify the homogenized solution $(u,v)$ by adding appropriate corrector functions $(w_{n,1},w_{n,2})$, allowing a form of strong approximation of $P_n u_n$, given by
\begin{equation}
w_n=(w_{n,1},w_{n,2})=\bigg(u(x)-\frac{1}{n}\sum_{i=1}^ N \chi_{A_n}(x)U^i(nx)\frac{\partial u}{\partial x_i}(x),\frac{\chi_{B_n}v}{1-X}(x) \bigg)\,\in H^1(A_n)\times L^2(B_n),\end{equation}
where $(u,v)$ is the unique weak solution of
\eqref{nlh}--\eqref{lh}. Since we have Neumann boundary conditions, we also need to guarantee that the vector must be in $W_n$. 
For this, we set 
\[
\bigg(w_{1,n}-\frac{1}{2}\frac{m_n}{|A_n|},\, w_{2,n}-\frac{1}{2}\frac{m_n}{|B_n|}\bigg) \in W_n,
\]
with 
\[
m_n=\int_{A_n} w_{1,n}(x)\,dx+\int_{B_n} w_{2,n}(x)\,dx,
\]
which is exactly the correction term ensuring the strong convergence in $H^1(\Omega)\times L^2(\Omega)$ of the solution $(u_n,v_n)\in W_n$ to its homogenized limit $(u,v)\in \mathcal{W}$ and vanish in the limit. Moreover, in Theorem~\ref{thrmnlinballs} we establish that this limit is the unique solution in $H^1(\Omega)\times L^2(\Omega)$.
 
\begin{proof}[Proof of Theorem \ref{correctornlinballs}]
The bilinear form \eqref{bilinearform} corresponding to the system \eqref{hlocal}--\eqref{hnl} is coercive, by Proposition \ref{coerc}. Then, there exists a constant $C_c>0$ such that $$C_c(\|u_n\|_{L^2(A_n)}+\|v_n\|_{L^2(B_n)})\leq \frac12 a_n((u_n,v_n),(u_n,v_n)).$$
Moreover, recalling the explicit structure of $a_n$, and keeping only a portion of the Laplacian contribution, we also obtain a coercive estimate
$$a_n\bigl((u_n,v_n),(u_n,v_n)\bigr)
   \;\geq\; \alpha \|\nabla u_n\|_{L^2(A_n)}^2
            + C_1\bigl(\|u_n\|_{L^2(A_n)}^2 + \|v_n\|_{L^2(B_n)}^2\bigr)=K(\|u_n\|_{H^1(A_n)}+\|v_n\|_{L^2(B_n)}),$$
for some $\alpha,C_1,K>0$. 

Consider the vector $$\Big( w_{1,n}-\frac{1}{2}\frac{m_n}{|A_n|},w_{2,n}-\frac{1}{2}\frac{m_n}{|B_n|}\Big)\in W_n$$ defined in \eqref{corrector2}. We have,
\begin{align}
    &K\bigg(\bigg\|u_n-\bigg(w_{n,1}-\frac{m_n}{2|A_n|}\bigg)\bigg\|_{H^1(A_n)}+\bigg\|v_n-\bigg(\frac{\chi_{B_n}v}{1-X}-\frac{m_n}{2|B_n|}\bigg)\bigg\|_{L^2(B_n)}\bigg)\notag\\&
    \leq a_n\bigg(\bigg(u_n-\bigg(w_{1,n}-\frac{m_n}{2|A_n|} \bigg)\bigg),v_n-\bigg(w_{2,n}-\frac{m_n}{2|B_n|}\bigg),\bigg(u_n-\bigg(w_{1,n}-\frac{m_n}{2|A_n|} \bigg),v_n-\bigg(w_{2,n}-\frac{m_n}{2|B_n|}\bigg)\bigg)\bigg)\notag\\
    &=a_n((u_n,v_n),(u_n,v_n))-2a_n\bigg((u_n,v_n),\bigg(w_{1,n}-\frac{m_n}{2|A_n|}, w_{2,n}-\frac{m_n}{2|B_n|}\bigg)\bigg)\label{c1unvn}\\
    &\qquad +a_n\bigg(\bigg(\frac{m_n}{2|A_n|},\frac{m_n}{2|B_n|}\bigg),\bigg(\frac{m_n}{2|A_n|},\frac{m_n}{2|B_n|}\bigg)\bigg)- 2 a_n ((w_{1,n}, w_{2,n}) , \bigg(\frac{m_n}{2|A_n|},\frac{m_n}{2|B_n|}\bigg) \bigg)\label{c2unvn} \\
    &\qquad +a_n((w_{1,n},w_{2,n}),(w_{1,n},w_{2,n}))\label{stress}.
\end{align}

To deal with \eqref{c2unvn}, we observe that
\begin{align*}&a_n\bigg(\bigg(\frac{m_n}{2|A_n|},\frac{m_n}{2|B_n|}\bigg),\bigg(\frac{m_n}{2|A_n|},\frac{m_n}{2|B_n|}\bigg)\bigg)-2a_n \bigg((w_{1,n}, w_{2,n}) , \bigg(\frac{m_n}{2|A_n|},\frac{m_n}{2|B_n|}\bigg)\bigg)\\&=\displaystyle\iint_{A_n\times B_n}J(x-y)\bigg(\frac{m_n}{2|A_n|}-\frac{m_n}{2|B_n|}\bigg)^2dydx
\\& \qquad -2\iint_{A_n\times B_n} J(x-y)(w_{2,n}(y)-w_{1,n}(x))\bigg(\frac{m_n}{2|A_n|}-\frac{m_n}{2|B_n|}\bigg)dydx.
 \end{align*}
Rewriting the integrals over $\Omega$, we get
\begin{align*}
&\iint_{\Omega\times \Omega}J(x-y)\chi_{A_n}(x)\chi_{B_n}(y)dydx\bigg(\frac{m_n}{2|A_n|}-\frac{m_n}{2|B_n|}\bigg)^2\\ 
& \quad - \iint_{\Omega\times \Omega} J(x-y)\chi_{A_n}(x)\chi_{B_n}(y)(w_{2,n}(y)-w_{1,n}(x))dydx\bigg(\frac{m_n}{2|A_n|}-\frac{m_n}{2|B_n|}\bigg)+o\bigg(\frac{1}{n}\bigg)\longrightarrow 0, \qquad \text{as} \quad n\to \infty,\end{align*}
Indeed,
\begin{align}\label{mnto0}
&\frac{m_n}{2|A_n|}-\frac{m_n}{2|B_n|}=m_n\bigg(\frac{1}{2|A_n|}-\frac{1}{2|B_n|}\bigg)=\bigg(\int_{A_n}w_{1,n}dx+\int_{B_n}w_{2,n}dx\bigg)\bigg(\frac{1}{2|A_n|}-\frac{1}{2|B_n|}\bigg)\notag\\
&=\bigg(\int_{\Omega}\chi_{A_n}(x)udx+\int_{\Omega}\frac{\chi_{B_n}v}{1-X}(x)dx+o\bigg(\frac{1}{n}\bigg)\bigg)\Bigg(\frac{1}{2\displaystyle\int_{\Omega}\chi_{A_n}(x)dx}-\frac{1}{2\displaystyle\int_{\Omega}\chi_{B_n}(x)dx}\Bigg)\notag\\
&\longrightarrow \bigg(\int_{\Omega}Xudx+\int_{\Omega}v(x)dx\bigg)\Bigg(\frac{1}{2\displaystyle\int_{\Omega}Xdx}-\frac{1}{2\displaystyle\int_{\Omega}(1-X)(x)dx}\Bigg)=0, \qquad \text{as} \quad n\to \infty,
\end{align}
using the assumption $$0=\displaystyle\int_{A_n}u_n (x) dx+\int_{B_n}v_n(x) dx,$$ whose limit is given by
$$\int_{A_n}u_n (x) dx+\int_{B_n}v_n (x) dx\longrightarrow \int_{\Omega}Xu(x) dx+\int_{\Omega}v (x) dx=0,\qquad \text{as} \quad n\to \infty.$$

The term in \eqref{c1unvn} can be handle using \eqref{solution} and \eqref{mnto0}, 
\begin{align*}
    &a_n((u_n,v_n),(u_n,v_n))-2a_n\bigg((u_n,v_n),\bigg(w_{1,n}-\frac{m_n}{2|A_n|}, w_{2,n}-\frac{m_n}{2|B_n|}\bigg)\bigg)
    \\
    &=-(f_n,u_n)_{L^2(A_n)}-(f_n,v_n)_{L^2(B_n)}+2\bigg(f_n,w_{1,n}-\frac{m_n}{2|A_n|}\bigg)_{L^2(A_n)}+2\bigg(f_n,w_{2,n}-\frac{m_n}{2|B_n|}\bigg)_{L^2(B_n)}\\
    &=-\displaystyle\int_{\Omega}\chi_{A_n}u_nf_ndx-\int_{\Omega}\chi_{B_n}v_nf_ndx+2\int_{\Omega}\chi_{A_n}f_nudx+2\int_{\Omega}f_n\frac{\chi_{B_n}v}{1-X}dx\\
    &\quad \quad \quad \quad \quad \quad \quad \quad \quad  -2\int_{\Omega}\chi_{A_n}f_n\frac{m_{n}}{2|A_n|}-2\int_{\Omega}\chi_{B_n}f_n\frac{m_{n}}{2|B_n|}+o\bigg(\frac{1}{n}\bigg) \\
    &\longrightarrow -\int_{\Omega}Xufdx-\int_{\Omega}vfdx+2\int_{\Omega}fXudx+2\int_{\Omega}fvdx=\int_\Omega fXudx+\int_{\Omega}fvdx \qquad \text{as} \quad n\to \infty.
    \end{align*}

The most delicate step is to pass to the limit in the last term \eqref{stress}.  
This limit turns out to be
\[
-\left(\int_\Omega fXu + \int_\Omega fv\right),
\]
which matches, with opposite sign, the limit of the previous contribution in \eqref{c1unvn}.  
Hence both terms cancel, and the whole expression (\eqref{c1unvn}, \eqref{c2unvn} and \eqref{stress})  converges to zero. To compute the last limit we observe that
\begin{align}
& a_n((w_{1,n},w_{2,n}),(w_{1,n},w_{2,n}))=\int_{A_n} \bigg|\nabla \bigg(u(x)-\frac{1}{n}\sum_{i=1}^NU^i(nx)\frac{\partial u}{\partial x_i}(x)\bigg)\bigg|^2dx\label{1stc}\\
&\qquad +\iint_{A_n\times B_n} J(x-y)\bigg(\frac{\chi_{B_n}v}{1-X}(y)-\bigg(u(x)-\frac{1}{n}\sum_{i=1}^N \chi_{A_n}(x)U^i(nx)\frac{\partial u}{\partial x_i}(x)\bigg)\bigg)^2dydx\label{2ndc}\\
&\qquad +\frac{1}{2}\iint_{B_n \times B_n} G(x-y)\bigg(\frac{\chi_{B_n}v}{1-X}(y)-\frac{\chi_{B_n}v}{1-X}(x)\bigg)^2dydx\label{3rdc}
\end{align}
We will analyze each term separately. 

For the first term \eqref{1stc}, we expand the square and observe that terms coming from the corrector terms reduce to $o(1/n)$
\begin{align*}
  &  \int_{A_n}|\nabla u(x)|^2-2\nabla u(x)\sum_{i=1}^N\nabla_y U^i(nx)\frac{\partial u}{\partial x_i}(x)+\bigg[\sum_{i=1}^N \nabla_y U^i(nx)\frac{\partial u}{\partial x_i}(x)\bigg)\bigg]^2dx+o\bigg(\frac{1}{n}\bigg)\end{align*}
We will pass to the limit the following decomposition, using the regularity of $u$ \footnote{If $f\in L^2(\Omega)$, we get $\nabla u\in L^2(A)$ in \eqref{nlh}--\eqref{lh}}: 
 \begin{align}
&\int_{A_n}|\nabla u(x)|^2-\nabla u(x)\sum_{i=1}^N\nabla_y U^i(nx)\frac{\partial u}{\partial x_i}(x)dx+o\bigg(\frac{1}{n}\bigg) \label{l1c}\\
&+\int_{A_n}\bigg[\sum_{i=1}^N \nabla_y U^i(nx)\frac{\partial u}{\partial x_i}(x)\bigg)\bigg]^2-\nabla u(x)\sum_{i=1}^N\nabla_y U^i(nx)\frac{\partial u}{\partial x_i}(x)dx \label{l2c}
\end{align}
where we can rewrite the above expression as 
\begin{align}
&\int_{A_n} \nabla u(x) \left( I - M(n x) \right) \nabla u(x) \, dx + o\left(\frac{1}{n}\right)\label{passlimdiv}\\
&+\int_{A_n} [M(nx)\nabla u(x)-\nabla u(x)][M(nx)\nabla u(x)]dx\label{passlim0},
\end{align}
where 
\begin{equation}\label{matrix}
M(nx):=\bigg[\frac{\partial U^i}{\partial y_j}\bigg]_{i,j=1}^N=
\begin{pmatrix}
\frac{\partial U^1}{\partial y_1} & \frac{\partial U^2}{\partial y_1} & \cdots & \frac{\partial U^N}{\partial y_1} \\
\frac{\partial U^1}{\partial y_2} & \frac{\partial U^2}{\partial y_2} & \cdots & \frac{\partial U^N}{\partial y_2} \\
\vdots & \vdots & \ddots & \vdots \\
\frac{\partial U^1}{\partial y_N} & \frac{\partial U^2}{\partial y_N} & \cdots & \frac{\partial U^N}{\partial y_N}
\end{pmatrix}(nx).\end{equation}
Notice that, since we have the equation \eqref{Ui}, we can perform the following computation,
\begin{align*}
    0=-\int_{Y^*}U^l(\Delta U^i)&=\int_{Y^*}\nabla U^i(y)\nabla U^l(y)dy-\int_{\partial Y^*_{int}}U^l(y)\underbrace{\frac{\partial U^i}{\partial \eta}}_{=\eta_i}(y)dS(y)\\
    &=\int_{Y^*}\nabla U^i(y)
    \nabla U^l(y)dy-\int_{\partial Y^*_{int}}[U^le_i](y)\eta dS_i(y)\\
    &=\int_{Y^*}\nabla U^i(y)
    \nabla U^l(y)dy-\int_{ Y^*}div [U^le_i](y)dy\\
    &=\int_{Y^*}\nabla U^i(y)\nabla U^l(y)dy-\int_{Y^*}\frac{\partial U^l}{\partial y_i}dy.
\end{align*}
Then, we conclude that
\begin{equation*}
\int_{Y^*}\nabla U^i(y)\nabla U^l(y)dy=\int_{Y^*}\frac{\partial U^l}{\partial y_i}dy,\end{equation*}
which proves the following property of the matrix \eqref{matrix}:
\begin{equation}\label{sM}
\int_{Y^*}\nabla U^i(y)\nabla U^l(y)dy=\int_{Y* }\frac{\partial U^i}{\partial y_l}dy=\int_{Y^*}\frac{\partial U^l}{\partial y_i}dy.\end{equation}
Then, we can rewrite \eqref{passlimdiv}--\eqref{passlim0} as 
\begin{align}
    \int_{A_n} \nabla u(x) \left( I - M(n x) \right) \nabla u(x) \, dx &+\displaystyle\int_{A_n} [(M(nx)^2-M(nx))\nabla u]
\cdot \nabla u(x)dx+ o\left(\frac{1}{n}\right)\notag\\
   & = \int_{\Omega} \nabla u(x) \widetilde{\left( I - M(n x) \right)} \nabla u(x) \, dx + o\left(\frac{1}{n}\right)\label{limhom} \\
&\qquad +\displaystyle\int_{\Omega} [\widetilde{\mathit{(M(nx)^2-M(nx))}}\nabla u]
\cdot \nabla u(x)dx. \label{lim0}
\end{align}
Now we can pass to the limit, as $n\to \infty$. The term \eqref{limhom} can be treated as follows,
\begin{equation}\label{localcorrector}
\lim_{n\to \infty}\int_\Omega \nabla u(x) \widetilde{\bigg[I - M(nx)\bigg]} \nabla u(x) dx
=\int_\Omega \nabla u(x) \, Q_{\rm Hom} \, \nabla u(x)dx,
\end{equation}
where $Q_{\rm Hom}$ is the matrix of the homogenized coefficients given by 
$$
\left( Q_{\rm Hom} \right)_{ij} = q_{ij} = \frac{1}{|Y|}\int_{Y^*}I-\bigg[\frac{\partial U^i}{\partial y_j}\bigg]_{i,j=1}^Ndx.
$$
Notice that, by the Average Convergence for Periodic Functions [see \cite{doina2}, p. xvi], we have  
$$
\widetilde{\bigg[I - \bigg[\frac{\partial U^i}{\partial y_j}\bigg]_{i,j=1}^N(nx)\bigg]} \rightharpoonup  q_{ij} \quad \textrm{weakly$^*$ in } L^\infty(\Omega).  
$$
The last part, \eqref{lim0}, vanishes in the limit. To show this, we use the property \eqref{sM} of the matrix \eqref{matrix},
\begin{equation*}\lim_{n\to \infty}\displaystyle\int_{\Omega} [\widetilde{\mathit{(M(nx)^2-M(nx))}}\nabla u]
\cdot \nabla u(x)dx=\frac{1}{|Y|}\int_{\Omega} [\tilde{M}-\tilde{M}]\nabla u(x)\cdot \nabla u(x)dx,\end{equation*}
where $\tilde{M}$ denotes the matrix
\begin{equation*}
\tilde{M}=
\begin{pmatrix}
\int_{Y^*}\frac{\partial U^1}{\partial y_1}dx & \cdots & \int_{Y^*}\frac{\partial U^N}{\partial y_1}dx \\
\int_{Y^*}\frac{\partial U^1}{\partial y_2}dx & \cdots & \int_{Y^*}\frac{\partial U^N}{\partial y_2}dx \\
\vdots  & \ddots & \vdots \\
\int_{Y^*}\frac{\partial U^1}{\partial y_N}dx & \cdots & \int_{Y^*}\frac{\partial U^N}{\partial y_N}dx
\end{pmatrix}.\end{equation*}

Now we deal with \eqref{2ndc}. We can write the square term as follows:
\begin{align*}
    &\bigg(\frac{\chi_{B_n}v}{1-X}(y)-\bigg(u(x)-\frac{1}{n}\sum_{i=1}^m \chi_{A_n}(x)U^i(nx)\frac{\partial u}{\partial x}(x)\bigg)\bigg)^2\\
    &= \bigg(\frac{\chi_{B_n}v^2}{(1-X)^2}(y)-2 \bigg(\frac{\chi_{B_n}v}{1-X}(y)\bigg)\bigg(u(x)-\frac{1}{n}\sum_{i=1}^m \chi_{A_n}(x)U^i(nx)\frac{\partial u}{\partial x}(x)\bigg)
    \\ & \quad +\bigg(u(x)-\frac{1}{n}\sum_{i=1}^m \chi_{A_n}(x)U^i(nx)\frac{\partial u}{\partial x}(x)\bigg)^2\\
    &=u^2(x)-2u(x)\frac{\chi_{B_n}v(y)}{1-X}+\frac{\chi_{B_n}(y)v^2(y)}{(1-X)^2}+o\bigg(\frac{1}{n}\bigg),
\end{align*}
Then, using $\chi_{B_n}^2=\chi_{B_n}$, we obtain
\begin{align*}
 &\iint_{A_n\times B_n} J(x-y)\bigg(u^2(x)-2u(x)\frac{\chi_{B_n}v(y)}{1-X}+\frac{\chi_{B_n}(y)v^2(y)}{(1-X)^2}+o\bigg(\frac{1}{n}\bigg)\bigg)dydx\\
 &=\iint_{\Omega \times \Omega} J(x-y)\bigg(\chi_{A_n}(x)\chi_{B_n}(y)u^2(x)-2\chi_{A_n}(x)u(x)\frac{\chi_{B_n}v(y)}{1-X}+\chi_{A_n}(x)\frac{\chi_{B_n}(y)v^2(y)}{(1-X)^2}+o\bigg(\frac{1}{n}\bigg)\bigg)dydx\\
 &=\iint_{\Omega \times \Omega} J(x-y)\bigg(\chi_{A_n}(x)\chi_{B_n}(y)u^2(x)-\chi_{A_n}(x)u(x)\frac{\chi_{B_n}v(y)}{1-X}\bigg)dydx\\
 & \qquad +\iint_{\Omega\times \Omega} J(x-y)\bigg(\chi_{A_n}(x)\frac{\chi_{B_n}(y)v^2(y)}{(1-X)^2}-\chi_{A_n}(x)u(x)\frac{\chi_{B_n}v(y)}{1-X}\bigg)dydx+o\bigg(\frac{1}{n}\bigg).
\end{align*}
Notice that we can pass to the limit as $n\to \infty$, obtaining
\begin{align*}
&\iint_{\Omega \times \Omega} J(x-y)\bigg(X(1-X)u^2(x)-Xu(x)\frac{(1-X)v(y)}{1-X}\bigg)dydx\\
 & \qquad +\iint_{\Omega \times \Omega} J(x-y)\bigg(X\frac{1-Xv^2(y)}{(1-X)^2}-Xu(x)\frac{(1-X)v(y)}{1-X}\bigg)dydx
\end{align*}
that we can rewrite as follows:
\begin{align}\label{Jcorrectorp}
\iint_{\Omega \times \Omega} J(x-y)\,X\bigg((1-X)u(x)&-v(y)\bigg)u(x)dydx\\
& \qquad + \iint_{\Omega \times \Omega} J(x-y)\frac{v(y)}{1-X}\,X\bigg(v(y)-(1-X)u(x)\bigg)dydx\notag
\end{align}

To handle the last term, \eqref{3rdc}, we argue as follows. First, we observe that, since $G$ is symmetric, we can rewrite the nonlocal term as
\begin{align*}
&-\int_{B_n\times B_n} G(x-y)\bigg(\frac{\chi_{B_n}v}{1-X}(y)-\frac{\chi_{B_n}v}{1-X}(x)\bigg)\bigg(\frac{\chi_{B_n}v}{1-X}(x)\bigg)dydx\\
&=\int_{B_n}\int_{B_n}G(x-y)dy\frac{\chi_{B_n}v}{1-X}(x)\frac{\chi_{B_n}v}{1-X}(x)dx-\int_{B_n}\int_{B_n} G(x-y)\frac{\chi_{B_n}v}{1-X}(y)dy\frac{\chi_{B_n}v}{1-X}(x)dy\\
&=\int_{\Omega}\int_{\Omega}G(x-y)\chi_{B_n}(y)dy\frac{\chi_{B_n}v^2}{(1-X)^2}(x)dx-\int_{\Omega}\int_{\Omega}G(x-y)\frac{\chi_{B_n}v}{1-X}(y)dy\frac{\chi_{B_n}v}{1-X}(x)dy.
\end{align*}
Since $\chi_{B_n}\xrightharpoonup{*}1-X \in L^{\infty}(\Omega)$ and the other $L^2(\Omega)$-functions are fixed, when $n\to \infty$ we get
\begin{align}\label{Gcorrectorp}
    -\iint_{\Omega\times \Omega} G(x-y)&\frac{v(x)}{1-X}((1-X)v(y)-(1-X)v(x))dydx\\
    & = \frac12\iint_{\Omega\times\Omega} G(x-y)((1-X)v(y)-(1-X)v(x))\bigg(\frac{v(y)}{1-X}-\frac{v(x)}{1-X}\bigg)dydx.
\end{align}

Notice that, by \eqref{localcorrector},\eqref{Jcorrectorp} and \eqref{Gcorrectorp}, in the limit we get the bilinear form associated to the system \eqref{nlh}--\eqref{lh},
\begin{align*}
a_{\infty}((u,v),(u,v))&=\int_\Omega Q_{\rm Hom}\nabla u(x)\cdot \nabla u(x)dx+\int_{\Omega}u(x)\int_{\Omega} J(x-y)\,X\bigg((1-X)u(x)-v(y)\bigg)dydx\\&
\qquad+\int_{\Omega}\frac{v(y)}{1-X}\int_{\Omega} J(x-y)\, X\bigg(v(y)-(1-X)u(x)\bigg)dydx\\
&\qquad +\frac12\int_{\Omega}\frac{v(x)}{1-X}\int_{\Omega} G(x-y)((1-X)v(y)-(1-X)v(x))\bigg(\frac{v(y)}{1-X}-\frac{v(x)}{1-X}\bigg)dydx.\end{align*}
In fact, we obtained
$$a_n((w_{1,n},w_{2,n}),(w_{1,n},w_{2,n}))\longrightarrow a_{\infty}((u,v),(u,v))=-\int_\Omega fXudx-\int_{\Omega}fvdx, \qquad \text{as} \quad n\to \infty,$$
concluding the proof of \eqref{correctorconvergence}
\end{proof}

Finally, we prove Theorem \ref{thrmnlinballs}, where we pass to the limit of the weak formulation of \eqref{hlocal}-\eqref{hnl} to obtain the homogenized equation where $(u,v)\in H^1(\Omega)\times L^2(\Omega)$ is a weak solution of 
\eqref{nlh}-\eqref{lh}, using Proposition \ref{correctornlinballs}.  

Since $P_n u_n$ denotes the extension of $u_n$ to $\Omega$ we get
$$\bigg\|u_n - \bigg(w_{n,1}-\frac{m_n}{2|A_n|}\bigg)\bigg\|_{H^1(A_n)} \ge C\,\bigg\|P_n \bigg(u_n - \bigg(w_{n,1}-\frac{m_n}{2|A_n|}\bigg)\bigg)\bigg\|_{H^1(\Omega)}$$
for some constant $C>0$ independent of $n$.

\begin{proof}[Proof of Theorem \ref{thrmnlinballs}] Let $(\varphi,\phi)\in H^1(\Omega)\times L^2(\Omega)$ such that 
$$X\int_\Omega \varphi(x)dx+\int_\Omega \phi(x)dx=0.$$
Define $$m_n=\displaystyle\int_{A_n}\varphi(x)dx+\int_{B_n}\frac{\phi}{1-X}(x)dx,$$ and consider the vector
$$v=\bigg(\varphi-\frac{1}{2}\frac{m_n}{|A_n|
},\frac{\phi}{1-X}-\frac{1}{2}\frac{m_n}{|B_n|}\bigg)\in W_n,$$
since
$$\int_{A_n}\varphi(x)dx-\frac{1}{2}\int_{A_n}\frac{m_n}{|A_n|}dx+\int_{B_n}\frac{\phi(x)}{1-X}dx-\frac{1}{2}\int_{B_n}\frac{m_n}{|B_n|}dx=0.$$
Applying $v\in W_n$ in  \eqref{solution}, we obtain
\begin{align*}
&\int_{A_n} \nabla u_n \cdot \nabla\varphi dx+\iint_{A_n\times B_n}J(x-y)(v_n(y)-u_n(x))\bigg(\bigg(\frac{\phi(y)}{1-X}-\varphi(x)\bigg)-\frac12\bigg(\frac{m_n}{|B_n|}-\frac{m_n}{|A_n|}\bigg)\bigg)dydx\\
&\qquad +\frac12\iint_{B_n\times B_n}G(x-y)(v_n(y)-v_n(x))\bigg(\frac{\phi(y)}{1-X}-\frac{\phi(x)}{1-X}\bigg)dydx\\
&=-\int_{A_n}f_n\bigg(\varphi(x)-\frac12 \frac{m_n}{|A_n|}\bigg)dx-\int_{B_n}f_n\bigg(\frac{\phi(x)}{1-X}-\frac12 \frac{m_n}{|B_n|}\bigg)dx.
    \end{align*}
Introducing $\pm w_{n,1}$, the corrector term \eqref{w1nnl}, in this expression, we obtain
\begin{align*}
&\int_{A_n} \nabla (u_n-w_{n,1}) \nabla \varphi dx+
\int_{A_n} \nabla \bigg[u-\frac{1}{n}\sum_{i=1}^{N}\nabla_yU^i(nx)\frac{\partial u}{\partial x_i}\bigg] \nabla \varphi dx\\
&\qquad +\iint_{A_n\times B_n}J(x-y)(v_n(y)-u_n(x))\bigg(\bigg(\frac{\phi(y)}{1-X}-\varphi(x)\bigg)-\frac12\bigg(\frac{m_n}{|B_n|}-\frac{m_n}{|A_n|}\bigg)\bigg)dydx\\
&\qquad +\frac12\iint_{B_n\times B_n}G(x-y)(v_n(y)-v_n(x))\bigg(\frac{\phi(y)}{1-X}-\frac{\phi(x)}{1-X}\bigg)dydx\\
&=-\int_{A_n}f_n\bigg(\varphi(x)-\frac12 \frac{m_n}{|A_n|}\bigg)dx-\int_{B_n}f_n\bigg(\frac{\phi(y)}{1-X}-\frac12 \frac{m_n}{|B_n|}\bigg)dx.
\end{align*}
Rewriting the above equation with the characteristic functions and the extension operator $P_n$ and using the regularity of $u$, we get,
\begin{align*}
&\int_{\Omega} \chi_{A_n}(x)\nabla P_n(u_n-{w_{n,1}})\,\nabla \varphi(x) dx+
\int_{\Omega} \widetilde{\bigg(I-\bigg[\frac{\partial U^i}{\partial y_j}\bigg]_{j,i=1}^N}\bigg) \nabla u \nabla \varphi dx+o\bigg(\frac{1}{n}\bigg)\\
& \qquad +\iint_{A_n\times B_n}J(x-y)(v_n(y)-u_n(x))\bigg(\bigg(\frac{\phi(y)}{1-X}-\varphi(x)\bigg)-\frac12\bigg(\frac{m_n}{|B_n}-\frac{m_n}{|A_n|}\bigg)\bigg)dydx\\
&\qquad +\frac12\iint_{B_n\times B_n}G(x-y)(v_n(y)-v_n(x))\bigg(\frac{\phi(y)}{1-X}-\frac{\phi(x)}{1-X}\bigg)dydx\\
&=-\int_{A_n}f_n\bigg(\varphi(x)-\frac12 \frac{m_n}{|A_n|}\bigg)dx-\int_{B_n}f_n\bigg(\frac{\phi(x)}{1-X}-\frac12 \frac{m_n}{|B_n|}\bigg)dx.\end{align*}

Now, we are able to pass to the limit using Theorem \ref{correctornlinballs}, the definition of $(Q_{\rm Hom})$ and the extension estimate
\eqref{extension}. First, notice that
$$m_n=\int_\Omega \chi_{A_n}\varphi(x)dx+\int_\Omega \chi_{B_n}\frac{\phi(x)}{1-X}dx\longrightarrow \int_\Omega X\varphi(x)dx+\int_\Omega \phi(x)dx =0, \ \text{as}\ n\to \infty.$$
Then, we obtain
\begin{align*}\label{locallimhp}
&\int_{\Omega}Q_{\rm Hom}\nabla u(x)  \nabla \varphi(x)dx+\int_{\Omega}\int_{\Omega}J(x-y)\ X(v(y)-(1-X)u(x))\bigg(\frac{\phi(y)}{1-X}-\varphi(x)\bigg)dydx\\
&\quad -\int_\Omega\bigg[\int_\Omega G(x-y)((1-X)v(y)-(1-X)v(x))dy\bigg]\frac{\phi}{1-X}(x)dx=-\int_\Omega f(x)\varphi(x)Xdx-\int_\Omega f(x)\phi(x)dx,
\end{align*}
which we can rewrite as
\begin{align*}
&\int_{\Omega}Q_{\rm Hom}\nabla u(x)  \nabla \varphi(x)dx+\int_{\Omega}\int_{\Omega}J(x-y)\ X(v(y)-(1-X)u(x))\bigg(\frac{\phi(y)}{1-X}-\varphi(x)\bigg)dydx\\
&+\frac12\int_\Omega\bigg[\int_\Omega G(x-y)((1-X)v(y)-(1-X)v(x))\bigg(\frac{\phi}{1-X}(y)-\frac{\phi}{1-X}(x)\bigg)dydx\\
&\qquad \qquad \qquad \qquad \qquad \qquad \qquad \qquad =-\int_\Omega f(x)\varphi(x)Xdx-\int_\Omega f(x)\phi(x)dx,
\end{align*}
for all $\varphi,\phi \in H^1(\Omega)\times L^2(\Omega)$, obtaining the weak formulation of \eqref{lh}--\eqref{nlh} with Neumann boundary conditions.

Finally, we prove that the solutions to the homogenized system 
\eqref{nlh}--\eqref{lh} are unique. Let $(u,v),(\tilde{u},\tilde{v})\in H^1(\Omega)\times L^2(\Omega)$ 
be two solutions of the system satisfying, respectively,
\[
\int_{\Omega}\!\big(Xu(x)+v(x)\big)\,dx = 0,
\qquad
\int_{\Omega}\!\big(X\tilde{u}(x)+\tilde{v}(x)\big)\,dx = 0.
\]
Consider the differences
\[
w_u := u - \tilde{u}, 
\qquad 
w_v := v - \tilde{v}.
\]
Subtracting the two systems \eqref{lh}--\eqref{nlh} yields
\begin{align}\label{ulnlinball}
0
&= \operatorname{div}\!\big(Q_{\rm Hom}\nabla w_u(x)\big)
  + \int_{\Omega} J(x-y)\,X
    (w_v(y)-(1-X)w_u(x))\,dy,
    \qquad x\in\Omega,\\[2mm]
\label{unlnlinball}
0
&= \int_{\Omega} G(x-y)
    \big[(1-X)w_v(y) - (1-X)w_v(x)\big]\,dy \notag\\
&\quad + \int_{\Omega} J(x-y)\,X
    ((1-X)w_u(y) - w_v(x))\,dy,
    \qquad x\in\Omega.
\end{align}
Multiplying \eqref{ulnlinball} by $w_u \in H^1(\Omega)$ 
and \eqref{unlnlinball} by $\dfrac{w_v}{1-X} \in L^2(\Omega)$, 
and integrating, we obtain
\begin{align}\label{unicJ}
0
&= \int_{\Omega} 
    \operatorname{div}\!\big(Q_{\rm Hom}\nabla w_u(x)\big)\,w_u(x)\,dx
  + \iint_{\Omega\times\Omega}
    J(x-y)\,X\,(w_v(y) -(1-X) w_u(x))\,w_u(x)\,dy\,dx,
\end{align}
and
\begin{align}\label{unicG}
0
&= \iint_{\Omega\times\Omega}
   G(x-y)\,(1-X)(1-X)\,
   \bigg[\frac{w_v(y)}{1-X} - \frac{w_v(x)}{1-X}\bigg]
   \frac{w_v(x)}{1-X}\,dy\,dx \notag\\
&\quad + \iint_{\Omega\times\Omega}
   J(x-y)\,X\,
   ((1-X)w_u(y) - w_v(x))\,
   \frac{w_v(x)}{1-X}\,dy\,dx.
\end{align}
Applying Green’s identity for the local term and using the symmetric hypothesis on $G$, 
we deduce
\begin{align*}
0
&= -\int_{\Omega} Q_{\rm Hom}\nabla w_u(x)\!\cdot\!\nabla w_u(x)\,dx
  + \iint_{\Omega\times\Omega}
    J(x-y)\,X(1-X)\,
    \bigg[\frac{w_v(y)w_u(x)}{1-X}-w_u^2(x)
          \bigg]\,dy\,dx,\\[2mm]
0
&= -\frac12\iint_{\Omega\times\Omega}
     G(x-y)\,(1-X)\,
     (w_v(x) -w_v(y))^2\,dy\,dx\\
&\quad + \iint_{\Omega\times\Omega}
     J(x-y)\,X(1-X)\,
     \bigg[\frac{w_v(x)w_u(y)}{1-X} 
           - \frac{w_v^2(x)}{(1-X)^2}\bigg]\,dy\,dx.
\end{align*}
Adding the two equations above, we obtain
\begin{align}\label{midterm}
0
&= -\int_{\Omega} Q_{\rm Hom}|\nabla w_u(x)|^2\,dx
   - \iint_{\Omega\times\Omega}
     J(x-y)\,X(1-X)\,
     \bigg[w_u(x) - \frac{w_v(y)}{1-X}\bigg]^2\,dy\,dx\notag\\
&\quad - \frac12\iint_{\Omega\times\Omega}
     G(x-y)\,(1-X)(1-X)\,
     \bigg[\frac{w_v(x)}{1-X} - \frac{w_v(y)}{1-X}\bigg]^2\,dy\,dx.
\end{align}
Since the kernels $J$ and $G$ are nonnegative and 
$Q_{\rm Hom}$ is positive definite (see Remark~\ref{Q_hom}), 
and $X\in(0,1)$, 
each term above is negative. 
Hence, equality can hold only if
\[
w_u(x) = c_1,
\qquad 
\frac{w_v(x)}{1-X} = c_2,
\qquad c_1,c_2\in\mathbb{R},
\]
and, from the second integral of \eqref{midterm} we also have
\[
w_u(x) - \frac{w_v(x)}{1-X} = 0,
\]
we conclude that $c_1=c_2=:c$. Finally, using the zero-mean condition
\[
\int_{\Omega} Xw_u(x)\,dx 
+ \int_{\Omega} w_v(x)\,dx = 0,
\]
we obtain
\[
0 = c\!\int_{\Omega} \big(X + 1 - X\big)\,dx = c|\Omega|,
\]
which implies $c=0$. 
Thus $w_u\equiv 0$ and $w_v\equiv 0$, 
and consequently $u=\tilde{u}$ and $v=\tilde{v}$. 
Therefore, the homogenized system \eqref{lh}--\eqref{nlh} 
admits a unique solution.
\end{proof}

\section{Strip Periodic Case} \setcounter{equation}{0}

In this section, we analyze the homogenization of the mixed local–nonlocal problem \eqref{hlocal}–\eqref{hnl} in the particular case where the spatial domain is the unit cube $\Omega = [0,1]^N \subset \mathbb{R}^N$, $N\ge 2$. Our goal is to understand the effective behavior of the system when $\Omega$ is decomposed into a partition of horizontal strip. More precisely, for each $n\ge 2$, we construct a family of subsets $(A_n, B_n)$ by dividing $\Omega$ into $2^{n-1}$ horizontal strips of equal height and selecting the union of every even strip to be $A_n$ (recall \eqref{strip}), with $B_n$ being its complement.

Our main result in this section is stated in Theorem~\ref{thrmstrip}. Before presenting its proof, we first introduce the homogenized bilinear form that emerges as the effective limit of the oscillatory sequence of mixed problems described above.

The pair $(u,v)\in  L^2([0,1]_{x_N}; H^1([0,1]^{N-1}))\times L^2([0,1]^N)$ with the property 
$$\displaystyle\int_{[0,1]^N} u(x)dx+\int_{[0,1]^N} v(x)dx=0$$
is called the weak solution to the homogenized equation \eqref{hstripslocal}--\eqref{hstripsnl} if the integral identity
\begin{equation}\label{lmlimit}
a_\infty((u,v),(\varphi,\phi))=-F_\infty(\varphi,\phi)\end{equation}
holds for all $(\varphi,\phi)\in L^2([0,1]_{x_N}; H^1([0,1]^{N-1}))\times L^2(0,1)$ with $\displaystyle\int_{[0,1]^N} \varphi(x)dx+\int_{[0,1]^N} \phi(x)dx=0$, where
\begin{align*}
    a_\infty((u,v),(\varphi,\phi))&=\sum_{i=1}^{N-1}\int_{[0,1]^N}\frac{\partial u}{\partial x_i} \frac{\partial \varphi}{\partial x_i}(x)dx\\
    &\qquad +\iint_{[0,1]^N\times [0,1]^N} J(x-y)(Xv(y)-(1-X)u(x))(\phi(y)-\varphi(x))dxdy\\&
    \qquad +\frac{1}{2}\iint_{[0,1]^N\times [0,1]^N} G(x-y)((1-X)v(y)-(1-X)v(x))(\phi(y)-\phi(x))dxdy\\
\end{align*}
and 
\begin{align*}
F_\infty(\varphi,\phi)=\int_{[0,1]^N} X\varphi(x)f(x)dx+\int_{[0,1]^N} (1-X)\phi(x)f(x)dx.
\end{align*}

We are now ready to prove Theorem \ref{thrmstrip}.

\begin{proof}[Proof of Theorem \ref{thrmstrip}]
Recall from \eqref{solution} that the bilinear form is written as
\begin{align}\label{hlocalstrip}
    &-\int_{A_n} \varphi(x)f_n(x)dx-\int_{B_n} \phi(x)f_n(x)dx=\int_{A_n}\nabla  u_n(x)\nabla \varphi(x)dx\\
    &\qquad +\iint_{A_n\times B_n} J(x-y)(v_n(y)-u_n(x))(\phi(y)-\varphi(x))dydx \notag 
    \\
    &\qquad  +\frac{1}{2}\iint_{B_n\times B_n} G(x-y)(v_n(y)-v_n(x))(\phi(y)-\phi(x))dydx,\notag
\end{align}
for all $(\varphi,\phi)\in H^1(A_n)\times L^2(B_n),$ with $\displaystyle\int_{A_n} \varphi(x)dx+\int_{B_n} \phi(x)dx=0$. 
Due to the geometric structure of the strips, the characteristic
function of $A_n,B_n$ depends only on the vertical coordinate $x_N$. In other words, we can denote
\begin{align}\label{chian}
\chi_{A_n}(x_N) := \chi_{A_n}(\hat{x},x_N), \qquad\text{and}\qquad
\chi_{B_n}(x_N) := \chi_{B_n}(\hat{x},x_N).
\end{align}
This fact allows us to treat
$\chi_{A_n}, \, \chi_{B_n}$ as a one-dimensional function in $x_N$ when we pass to the limit in the
homogenization process. 

Then, rewriting \eqref{hlocalstrip} in terms of the characteristics functions and manipulating the local term with the definition of $A_n$ we obtain
\begin{align}
&  -\int_{[0,1]^N} \chi_{A_n}(x_N)\varphi(x)f_n(x)d\hat{x}dx_N-\int_{[0,1]^N} \chi_{B_n}(x_N)\phi(x)f_n(x)d\hat{x}dx_N \notag\\
&=\sum_{i=1}^{N-1}\int_{A_n}\frac{\partial u_n}{\partial x_i}\frac{\partial \varphi}{\partial x_i}(x)dx+\sum_{\substack{k=1 \\ k \text{ even}}}^{2^{n-1}}\int_{[0,1]^{N-1}}\!\!\int_{\frac{k-1}{2^{n-1}}}^{\frac{k}{2^{n-1}}}
\frac{\partial u_n}{\partial x_N}(\hat{x},x_N)\,
\frac{\partial \varphi}{\partial x_N}(\hat{x},x_N)
\, dx_N\, d\hat{x}\label{localstrip}\\
    &\qquad +\iint_{[0,1]^N\times [0,1]^N} J(x-y)\chi_{B_N}(y_N)\chi_{A_n}(x_N)(v_n(y)-u_n(x))(\phi(y)-\varphi(x))dyd\hat{x}dx_N\notag \\
    &\qquad +\frac{1}{2}\iint_{[0,1]^N\times [0,1]^N} G(x-y)\chi_{B_n}(y_N)\chi_{B_n}(x_N)(v_n(y)-v_n(x))(\phi(y)-\phi(x))dyd\hat{x}dx_N,\quad \forall (\varphi,\phi)\in W_n.\label{hlocalstripp}
\end{align}

Our main tool for dealing with the strip structure is the construction of a special test function $\{\tilde{\varphi}_n:[0,1]^N\to \mathbb{R}\}$. To this end, consider $\xi(\hat{x})\in C^\infty([0,1]^{N-1})$, $\psi(x_N)\in C^\infty([0,1])$ and $\phi\in L^2([0,1]^N)$ and define 
\begin{equation}
\tilde{\varphi}(\hat{x},x_N)
=
\begin{cases}
    \phi(\hat{x},x_N), 
    & \text{if } (\hat{x},x_N)\in B_n, \\[8pt]
    \varphi:=\xi(\hat{x})\,\psi(x_N), 
    & \text{if } (\hat{x},x_N)\in 
   A_n := \displaystyle\bigcup_{\substack{k=1 \\ k \text{ even}}}^{2^{n-1}} (S_n)_k.
\end{cases}
\end{equation}
Assume the condition
\[
\int_{A_n} \tilde{\varphi}(x)\,dx+\int_{B_n} \phi(x)dx = 0.
\]
and the boundary conditions
\begin{equation}\label{conditionboundary}
\frac{\partial \tilde{\varphi}}{\partial \eta} = 0 \ \text{on } \partial A_n,
\qquad
\frac{\partial \tilde{\varphi}}{\partial x_i}(\hat{x},x_N)\Big|_{x_N=0}^{x_N=1} = 0,
\quad \text{for all } i = 1,\ldots, N-1.
\end{equation}

To prepare the passage to the limit, we define $\{\tilde{\varphi}_n:[0,1]^N\to \mathbb{R}\}$, replacing in $\tilde\varphi$ the dependence in $x_N$ by the average over each strip for each $k$, given by
\begin{equation}\label{specialtestfunctionstrip}
\tilde{\varphi}_n(\hat{x},x_N)
=
\begin{cases}
    \phi(\hat{x},x_N), 
    & \text{if } (\hat{x},x_N)\in B_n, \\[8pt]
    \varphi_n:=\displaystyle 
    \xi(\hat{x})\,
    \fint_{\frac{k-1}{2^{n-1}}}^{\frac{k}{2^{n-1}}}
    \psi(z)\,dz, 
    & \text{if } (\hat{x},x_N)\in 
   A_n := \displaystyle\bigcup_{\substack{k=1 \\ k \text{ even}}}^{2^{n-1}} (S_n)_k .
\end{cases}
\end{equation}
This sequence of special test functions satisfy two uniform convergence properties. Both convergences are proved by arguments analogous to those used in the case of \eqref{especialtestfunction}. The first one is the uniformly convergence to $\tilde\varphi$,
\begin{equation}\label{specialfunctionstripcv}
\tilde{\varphi}_n \longrightarrow \tilde{\varphi} 
\qquad \text{as } n\to\infty,
\qquad \text{uniformly in } [0,1]^N.
\end{equation}
The second uniform convergence is on the second derivatives. It follows from the fact that, inside each
strip of $A_n$, $\tilde{\varphi}_n$ replaces $\psi(x_N)$ in the definition of $\tilde\varphi$ by its average over the strip,
which converges uniformly to $\psi(x_N)$ due to the continuity of $\psi$, while all derivatives in the $\hat{x}$--variables depend only on the fixed smooth function $\xi$, then for each $i=1,\ldots, N-1$, we have
\begin{equation}\label{convergencespecialfunction}
\frac{\partial^2 \tilde{\varphi}_n}{\partial x_i^2}
\longrightarrow 
\frac{\partial^2 \tilde{\varphi}}{\partial x_i^2}
\qquad \text{as } n\to\infty,
\qquad \text{uniformly in } [0,1]^N.
\end{equation}
Another key property of the construction of $\tilde{\varphi}_n$ is that its partial 
derivative with respect to $x_N$ vanishes inside $A_n$, since the average of $\psi$ is constant (but depends on $k$) and $\xi$ does not depend on $x_N$.
We have that
\begin{equation}\label{N-derivativevanishes}
\frac{\partial \tilde{\varphi}_n}{\partial x_N}(\hat{x},x_N) = 0, 
\qquad (\hat{x},x_N)\in A_n:= \displaystyle\bigcup_{\substack{k=1 \\ k \text{ even}}}^{2^{n-1}} (S_n)_k.
\end{equation}
Consequently, for each fixed $n$ we obtain
\begin{equation}\label{specialfunctionvanish}
\int_{[0,1]^{N-1}}\!\!\int_{\frac{k-1}{2^{n-1}}}^{\frac{k}{2^{n-1}}}
\frac{\partial u_n}{\partial x_N}(\hat{x},x_N)\,
\frac{\partial \tilde{\varphi}_n}{\partial x_N}(\hat{x},x_N)\,
dx_N\,d\hat{x}=0.
\end{equation}

\hide{We also get the $L^2$-convergence of $\frac{\partial \tilde{\varphi}_n}{\partial x_i}\to \frac{\partial \tilde{\varphi}}{\partial x_i}$ for each $i=1,\ldots,N-1$. Indeed:

\begin{align*}
\int_\Omega \bigg|\frac{\partial \tilde{\varphi}_n}{\partial x_i}- \frac{\partial \tilde{\varphi}}{\partial x_i}\bigg|^2dx&=\underbrace{\int_{B_n} \bigg|\frac{\partial \tilde{\varphi}_n}{\partial x_i}- \frac{\partial \varphi}{\partial x_i}\bigg|^2dx}_{=0}+\int_{A_n} \bigg|\frac{\partial \varphi_n}{\partial x_i}- \frac{\partial \varphi}{\partial x_i}\bigg|^2dx\\
&=\sum_{k=1}^{2^{n-1}}\int_{[0,1]^{N-1}}\int_{\frac{k-1}{2^{n-1}}}^{\frac{k}{2^{n-1}}}\bigg|\displaystyle\fint_{\frac{k-1}{2^{n-1}}}^{\frac{k}{2^{n-1}}}\frac{\partial \varphi_n}{\partial x_i}(x,\tilde{y})d\tilde{y}- \bigg\displaystyle\fint_{\frac{k-1}{2^{n-1}}}^{\frac{k}{2^{n-1}}}\frac{\partial \varphi}{\partial x_i}(x)d\tilde{y}\bigg|^2dx_Ndx\\
&\leq\int_{[0,1]^{N-1}}\sum_{k=1}^{2^{n-1}}\int_{\frac{k-1}{2^{n-1}}}^{\frac{k}{2^{n-1}}}\fint_{\frac{k-1}{2^{n-1}}}^{\frac{k}{2^{n-1}}} \bigg|\frac{\partial \varphi_n}{\partial x_i}(x,\tilde{y})- \frac{\partial \varphi}{\partial x_i}(x)\bigg|^2d\tilde{y}dx_Ndx\\
&\leq  \frac{2^{2(n-1)}}{2^{2(n-1)}}\epsilon^2,
\end{align*}
where we use the fact that $\frac{\partial\varphi}{\partial x_i}$ is smooth, it is uniformly continuous. That is, for any $\epsilon>0$ and $i=1,\ldots, N-1$ we get $$\bigg|\frac{\partial \varphi_n}{\partial x_i}(x,\tilde{y})- \frac{\partial \varphi}{\partial x_i}(x)\bigg|<\epsilon,$$
for $|x_N-\tilde{y}|<\delta$.} 
Finally, notice that $\tilde{\varphi}_n\in W_n$, since 
\begin{align*}\int_{A_n}\varphi_n (x) \, dx+\int_{B_n}\phi (x) \, dx=&\sum_{\substack{k=1 \\ k \text{ even}}}^{2^{n-1}}\int_{[0,1]^{N-1}}\xi(\hat x)\,d\hat{x}\int_{\frac{k-1}{2^{n-1}}}^{\frac{k}{2^{n-1}}}dx_N\fint_{\frac{k-1}{2^{n-1}}}^{\frac{k}{2^{n-1}}}\varphi(z)dz\, +\int_{B_n}\phi\, dx\\&=\sum_{\substack{k=1 \\ k \text{ even}}}^{2^{n-1}}\int_{[0,1]^{N-1}}\xi(\hat x)\fint_{\frac{k-1}{2^{n-1}}}^{\frac{k}{2^{n-1}}}\varphi(z)dz\,d\hat x+\int_{B_n}\phi\,dx\\
&=\int_{A_n}\varphi (x)\, dx+\int_{B_n}\phi (x)\,dx=0.
\end{align*}

Then, we substitute this test function into the bilinear form \eqref{hlocalstripp} together with \eqref{specialfunctionvanish} in the second term of the line \eqref{localstrip}, to obtain 
\begin{align}
    -\int_{[0,1]^N} &\varphi_n(x)\chi_{A_n}(x_N)f_n(x)dx-\int_{[0,1]^N} \phi(x)\chi_{B_n}(x_N)f_n(x)dx=\sum_{i=1}^{N-1}\int_{A_n}\frac{\partial u_n}{\partial x_i}\frac{\partial \varphi_n}{\partial x_i}(x)dx \\
    &+\iint_{[0,1]^N\times [0,1]^N} J(x-y)\chi_{B_n}(y_N)\chi_{A_n}(x_N)(v_n(y)-u_n(x))\phi(y)dydx\notag\\
    &-\iint_{[0,1]^N\times [0,1]^N} J(x-y)\chi_{B_n}(y_N)\chi_{A_n}(x_N)(v_n(y)-u_n(x))\varphi_n(x)dydx \notag\\
    &-\iint_{[0,1]^N\times [0,1]^N} G(x-y)\chi_{B_n}(y_N)\chi_{B_n}(x_N)(v_n(y)-v_n(x))\phi(x)dydx.\notag
\end{align}

We now analyze the first term on the right hand side of \eqref{localstrip} for each $i \neq N$. Using the divergence theorem, we have
\begin{align*}
\int_{A_n} \frac{\partial u_n}{\partial x_i} \, \frac{\partial \varphi_n}{\partial x_i} \, dx
&= \sum_{\substack{k=1 \\ k \text{ even}}}^{2^{n-1}} \int_{(S_n)_k} \frac{\partial u_n}{\partial x_i} \, \frac{\partial \varphi_n}{\partial x_i} \, dx
=\sum_{\substack{k=1 \\ k \text{ even}}}^{2^{n-1}} \Bigg( \int_{(S_n)_k} \operatorname{div}(u_n \frac{\partial\varphi_n}{\partial x_i} \mathbf{e}_i) \, dx
- \int_{(S_n)_k} u_n \, \frac{\partial^2 \varphi_n}{\partial x_i^2} \, dx \Bigg) \\
&= \sum_{\substack{k=1 \\ k \text{ even}}}^{2^{n-1}} \Bigg( - \int_{(S_n)_k} u_n \, \frac{\partial^2 \varphi_n}{\partial x_i^2} \, dx
+ \int_{\partial (S_n)_k} \operatorname{Tr}(u_n) \, \nabla \varphi_n \cdot \mathbf{e}_i \, \nu \, dS \Bigg).
\end{align*}
where $Tr$ denotes the trace operator of $u_n$. This boundary integral vanishes in our domain setting $A_n$ defined in \eqref{strip}. In fact, we
 can write, for each $i=1,\ldots, N-1$ and $k=1,\ldots, 2^{n-1}$ even,
\begin{align*}
\int_{\partial (S_n)_k} \operatorname{Tr}(u_n) \, \nabla \varphi_n \cdot \mathbf{e}_i \, \nu \, dS
&= \int_{\frac{k-1}{2^{n-1}}}^{\frac{k}{2^{n-1}}} \operatorname{Tr}(u_n) \, \nabla \varphi_n \cdot \mathbf{e}_i \, \nu \, dS \\
&\quad + \int_{(\partial (S_n)_k)_{\text{top}}} \operatorname{Tr}(u_n) \, \nabla \varphi_n \cdot \mathbf{e}_i \, \nu \, dS
+ \int_{(\partial (S_n)_k)_{\text{bot}}} \operatorname{Tr}(u_n) \, \nabla \varphi_n \cdot \mathbf{e}_i \, \nu \, dS.
\end{align*}
Now, we observe that  
\begin{itemize}
    \item On the top horizontal interface $(\partial (S_n)_k)_{\text{top}}$ (where $x_N = \text{const}$), the normal satisfies $\nu = e_N$, so for $i \neq N$; hence the integral is zero.  
    \item On the bottom horizontal interface $(\partial (S_n)_k)_{\text{bot}}$ (where $x_N = \text{const}$), the normal satisfies $\nu = -e_N$; so again for $i \neq N$, the integral is zero again.  
    \item On the vertical walls (where $\frac{k-1}{2^{n-1}}<x_N<\frac{k}{2^{n-1}}$ for some $k$ even), by the boundary condition \eqref{conditionboundary}, we have $\displaystyle\frac{\partial\varphi_n}{\partial x_i}=\fint_{\frac{k-1}{2^{n-1}}}^{\frac{k}{2^{n-1}}}\frac{\partial}{\partial x_i}\varphi(x,z)dz= 0$ for $i \neq N$.  
\end{itemize}
Therefore, all boundary integrals vanish, leaving only the term
\[
\sum_{\substack{k=1 \\ k \text{ even}}}^{2^{n-1}} \int_{(S_n)_k} \frac{\partial u_n}{\partial x_i} \, \frac{\partial \varphi_n}{\partial x_i} \, dx
= - \sum_{\substack{k=1 \\ k \text{ even}}}^{2^{n-1}} \int_{(S_n)_k} u_n \, \frac{\partial^2 \varphi_n}{\partial x_i^2} \, dx
= - \int_{A_n} u_n \, \frac{\partial^2 \varphi_n}{\partial x_i^2} \, dx,
\]
and we can rewrite \eqref{localstrip} as follows:
\begin{align}
    &-\int_0^1\int_{[0,1]^{N-1}} \varphi(\hat{x},x_N)f_n(\hat{x},x_N)\, d\hat{x}\chi_{A_n}(x_N)\, dx_N-\int_0^1\int_{[0,1]^{N-1}}  \phi(\hat{x},x_N)f_n(\hat{x},x_N)\, d\hat{x}\chi_{B_n}(x_N)\, dx_N\notag\\
    &= -\sum_{i=1}^{N-1}\int_{[0,1]^N}\chi_{A_n}(x_N)u_n(x)\frac{\partial^2 \varphi_n}{\partial x_i^2}(x)dx +\iint_{[0,1]^{2N}} J(x-y)\chi_{B_n}(y_N)\chi_{A_n}(x_N)(v_n(y)-u_n(x))\phi(y)dydx\label{stripJnl}\\
    &\quad -\iint_{[0,1]^N\times [0,1]^N} J(x-y)\chi_{B_n}(y_N)\chi_{A_n}(x_N)(v_n(y)-u_n(x))\varphi_n(x)dydx\label{nlstrip}\\
    &\quad -\iint_{[0,1]^N\times [0,1]^N} G(x-y)\chi_{B_n}(y_N)\chi_{B_n}(x_N)(v_n(y)-v_n(x))\phi(x)dydx.\label{stripGnl}
\end{align}
Since $B_n$ is path--connected through its boundary, we apply Lemma~\ref{uvbounded} and consider the corresponding bounded sequences
\begin{equation}\label{bound}
\|u_n\|_{L^2(A_n)} \leq C, \qquad \|v_n\|_{L^2(B_n)}\leq C,
\end{equation} 
We have that, up to a subsequence, there exists $u,v\in L^2([0,1]^N)$ such that
\begin{equation}\label{stripconvergence}
u_n\rightharpoonup u\in L^2 ([0,1]^N), \qquad \text{and} \qquad v_n\rightharpoonup v \in L^2 ([0,1]^N)
\end{equation} 
then, we can deal with nonlocal term in \eqref{stripJnl} (and analogously for \eqref{nlstrip} and \eqref{stripGnl}) to pass to the limit when $n\to \infty$, since
\begin{align*}
&\iint_{[0,1]^{2N}} J(x-y)\chi_{B_n}(y_N)\chi_{A_n}(x_N)v_n(y)\varphi_n(x)dydx-\iint_{[0,1]^{2N}} J(x-y)\chi_{B_n}(y_N)\chi_{A_n}(x_N)u_n(x)\varphi_n(x)dydx\\
&=\int_{[0,1]^N}\bigg(\int_{[0,1]^N} J(x-y)\chi_{B_n}(y_N)v_n(y)dy\bigg)\varphi_n(x)\chi_{A_n}(x_N)dy\\
&\qquad \qquad \qquad \qquad \qquad \qquad \qquad \qquad \qquad -\int_{[0,1]^N}\bigg(\int_{[0,1]^N} J(x-y)\chi_{A_n}(x_N)u_n(x)\varphi_n(x)dx\bigg)\chi_{B_n}(y_N)dy.
\end{align*}
The second term on the right side can be handled using the continuity of $J$, the weak convergence 
$\chi_{A_n}u_n \rightharpoonup u$ in $L^2([0,1]^N)$, and the uniform convergence 
$\varphi_n \to \varphi$ in $C([0,1]^N)$.  
As a consequence, we obtain the strong convergence
\[
\int_{[0,1]^N} J(x-y)\chi_{A_n}(x_N)u_n(x)\varphi_n(x)\,dx
\;\longrightarrow\;
\int_{[0,1]^N} J(x-y)u(x)\varphi(x)\,dx
\quad \text{in } C([0,1]^N), \ \text{as}\  n\to \infty.
\]
Using the weak convergence $\chi_{B_n}\xrightharpoonup{*}1-X$ in $L^\infty([0,1]^N)$, we obtain
\begin{equation*}\label{nlargue}
\int_{[0,1]^N}\bigg(\int_{[0,1]^N} J(x-y)\chi_{A_n}(x_N)u_n(x)\varphi_n(x)dx\bigg)\chi_{B_n}(y_N)dy\longrightarrow \int_{[0,1]^N}\int_{[0,1]^N} J(x-y)u(x)\varphi(x)dx\,(1-X)dx,
\end{equation*}
as $n\to \infty$. The limit of the first term on the right side follows with similar computations. 

Finally, taking into account relations \eqref{stripconvergence} and \eqref{specialfunctionstripcv} in the local term \eqref{stripJnl}, and using the arguments above to handle the non-local terms \eqref{stripJnl}, \eqref{nlstrip}, and \eqref{stripGnl}, together with the strong convergence $f_n \to f$ in $L^2([0,1]^N)$, with the convergence
\begin{align*}
 &\int_{0}^{1}\!\!\int_{[0,1]^{N-1}} 
 \varphi_n(\hat{x},x_N)f_n(\hat{x},x_N)\,d\hat{x}\,\chi_{A_n}(x_N)\,dx_N
 +\int_{0}^{1}\!\!\int_{[0,1]^{N-1}} 
 \varphi_n(\hat{x},x_N)f_n(\hat{x},x_N)\,d\hat{x}\,\chi_{B_n}(x_N)\,dx_N \\
& \longrightarrow 
\int_{[0,1]^N} Xf(x)\varphi(x)\,dx
+\int_{[0,1]^N} (1-X)f(x)\varphi(x)\,dx,
\qquad \text{as } n \to \infty,
\end{align*}
we obtain the distributional formulation of the limit problem that reads as, 
\begin{align*}
    &-\int_{[0,1]^N} X f(x) \varphi(x)\,dx
    -\int_{[0,1]^N} (1-X) f(x) \phi(x)\,dx = -\sum_{i=1}^{N-1} \int_{[0,1]^N} 
    u(x) \frac{\partial^2 \varphi}{\partial x_i^2}(x)\,dx \\
    &\quad +\int_{[0,1]^N}\!\!\bigg[\int_{[0,1]^N} 
    J(x-y)\bigl(Xv(y)-(1-X)u(x)\bigr)\,dy\bigg]\phi(y)\,dx\\
    &\qquad -\int_{[0,1]^N}\!\!\bigg[\int_{[0,1]^N} 
    J(x-y)\bigl(Xv(y)-(1-X)u(x)\bigr)\,dy\bigg]\varphi(x)\,dx \\
    &\quad -\int_{[0,1]^N}\!\!\bigg[\int_{[0,1]^N} 
    G(x-y)\bigl((1-X)v(y)-(1-X)v(x)\bigr)\,dy\bigg]\phi(x)\,dx.
\end{align*}

Now, let $(\varphi(x),0) = (\xi(\hat{x})\, \psi(x_N),0)$ with 
$\int_{[0,1]^N} \varphi \, dx = 0$. Then, for $i=1,\dots,N-1$,
\[
\frac{\partial^2 \varphi}{\partial x_i^2}(x) = \frac{\partial^2 \xi}{\partial x_i^2}(\hat{x}) \, \psi(x_N),
\]
since $\psi$ depends only on $x_N$.
Using these test functions in the distributional formulation, we have
\begin{align}
\int_0^1 \psi(x_N) 
&\int_{[0,1]^{N-1}} X(x_N) f(x) \, \xi(\hat{x}) \, d\hat{x} \, dx_N
= \sum_{i=1}^{N-1} \int_0^1 \psi(x_N) 
\int_{[0,1]^{N-1}} u(x) \frac{\partial^2 \xi}{\partial x_i^2}(\hat{x}) \, d\hat{x} \, dx_N \notag\\
&\quad + \int_0^1 \psi(x_N) \int_{[0,1]^{N-1}} 
\Bigg[ \int_{[0,1]^N} J(x-y) \big( X(x_N) v(y) - (1-X(y_N)) u(x) \big) dy \Bigg] 
\xi(\hat{x}) \, d\hat{x} \, dx_N
\end{align}
for all $\psi \in C([0,1])$. By the density of $C([0,1])$ in $L^1([0,1])$ and Fubini's theorem, we deduce that for almost every $x_N \in [0,1]$,
\begin{align}
\int_{[0,1]^{N-1}} X(x_N) f(x) \, \xi(\hat{x}) \, d\hat{x} 
&= \sum_{i=1}^{N-1} \int_{[0,1]^{N-1}} u(x) \frac{\partial^2 \xi}{\partial x_i^2}(\hat{x}) \, d\hat{x} \notag\\
&\quad + \int_{[0,1]^{N-1}} \Bigg[ \int_{[0,1]^N} J(x-y) \big( X(x_N) v(y) - (1-X(y_N)) u(x) \big) dy \Bigg] 
\xi(\hat{x}) \, d\hat{x},
\end{align}
for all $\xi \in C^\infty([0,1]^{N-1})$. Notice that
\begin{align}
\underbrace{X(x_N) f(x) - \int_{[0,1]^N} J(x-y) \big( X(x_N) v(y) - (1-X(y_N)) u(x) \big) dy}_{\in L^2([0,1]^{N-1})}
\end{align}
serves as the source term in the weak Laplacian in the $\hat{x}$-directions. From standard elliptic regularity in $[0,1]^{N-1}$, we conclude that for almost every $x_N$,
\[
u(\cdot, x_N) \in H^2([0,1]^{N-1}).
\]
Therefore, we obtain the Sobolev inclusion
\[
u \in L^2\big([0,1]_{x_N}; H^1([0,1]^{N-1})\big).
\]
This shows that $(u,0)$ is a weak solution of \eqref{hstripslocal} in the $\hat{x}$-directions. 

On the other hand, for every $(0,\phi) \in H^1(\Omega)\times L^2(\Omega)$, with $\int_\Omega \phi(x)dx=0$, we have
\begin{align}
    -\int_{[0,1]^N} (1-X(x_N))&f(x)\phi(x)dx=\int_{[0,1]^N}\bigg[\int_{[0,1]^N} J(x-y)[X(x_N)v(y)-(1-X(y_N))u(x)]dy\bigg]\phi(y)dx,\notag\\
    &-\int_{[0,1]^N}\bigg[\int_{[0,1]^N} G(x-y)[(1-X(x_N))v(y)-(1-X(y_N))v(x)]dy\bigg]\phi(x)dx.
\end{align}
With the same arguments mentioned above, we obtain the weak formulation of the system \eqref{hstripslocal}-\eqref{hstripsnl}, whose variational formula is associated with \eqref{lmlimit}. 

As the same argument leading to uniqueness of the solution used in the proof of Theorem \ref{localinballs} and Theorem \ref{thrmnlinballs} can be also used here, we can also verify that $(u,v)\in L^2((0,1);H^1(0,1)^{N-1})\times L^2([0,1]^N)$ is the unique solution to the system \eqref{hstripslocal}--\eqref{hstripsnl}.
\end{proof}

We can extend these computations in some contexts. Here we mention the case of a $N$ dimensional equidistributed cylinder per cell.

\begin{remark} {\rm Now, we describe a generalization of the previous computation with the following setting: 
Let $[0,1]^N$ be the unit cube in $\mathbb{R}^N$ and let $l<N$. We divide the cross-section into $n^{N-l}$ cells and place one cylinder per cell, for $n$ fixed.
Let $B(x_i,r_n) \subset \mathbb{R}^{N-l}$ in the $i$-th cell of the ${N-l}$-dimensional grid and $r_n < 1/n$. Extend each ball along the remaining last coordinate to form a cylinder. Then we define the periodic cylindrical domain
\[
A_n = \bigcup_{i=1}^{n^{N-l}}A_i:=  \bigcup_{i=1}^{n^{N-l}}
\Bigl( B(x_i,r_n) \times [0,1]^l  \Bigr) \cap [0,1]^{N} \,.
\]
The complementary domain is
$$B_n = [0,1]^N \setminus A_n.$$
For example, for $N=3$ and $l=2$ (cylinders along the $x_3$ axis):
\[
A_n = \bigcup_{i=1}^{n^2} B(x_i,r_n) \times [0,1] \subset [0,1]^3, 
\quad B_n = [0,1]^3 \setminus A_n.
\]
This setting corresponds to a thick junction of type 3:2:1, in the notation of \cite{taras}.

The homogenization limit follows once we make a small but important tweak in the construction of the special class introduced in \eqref{specialtestfunctionstrip}, given by $\{\varphi_n:[0,1]^N\to \mathbb{R}\}$. Let us consider $\varphi \in C([0,1]^l;C^2([0,1]^{N-l})$ with $$\int_{[0,1]^N} \varphi(x) dx =0$$ which satisfies 
\begin{equation}\frac{\partial \varphi}{\partial \eta}=0 \in \partial A_n, \qquad\frac{\partial \varphi}{\partial x_i}(x)\bigg|_{x_i=0}=0=\frac{\partial \varphi}{\partial x_i}(x)\bigg|_{x_i=1}, \qquad \text{for all} \quad  x_l,\ldots,x_N\in [0,1], \quad i=1,\ldots,l.\end{equation} 
We define
\begin{align}
\varphi_n(x)=\begin{cases}        \varphi(x), \ x\in B_n\\ 
       \displaystyle \fint_{A_i}\varphi(z_1,\ldots,z_l,z_{l+1},\ldots, z_N)dz_1\ldots \ d z_l\ , \ x\in A_i.
    \end{cases}
\end{align}

With this test function we can reply the same computations given in the proof of Theorem \ref{thrmstrip}.
We leve the details to the reader. }
\end{remark}

\vspace{0.5cm}

\noindent\textbf{Acknowledgments:} 
Partially supported by Grants CNPq \#303561/2024-6 and FAPESP \#20/14075-6 (Brazil) and 
UBACyT 20020160100155BA (Argentina) and CONICET PIP GI No 11220150100036CO (Argentina). 

The first author MCP gratefully acknowledges the medical team led by Prof. Dr. G. Lepski (HC/FMUSP) for their outstanding care, 
which was instrumental in saving his life.

\end{document}